\documentclass[12pt,a4paper,intlimits,sumlimits]{amsart}

\usepackage{amsmath, amsthm, mathtools}
\usepackage{amsfonts}
\usepackage[alphabetic, nobysame]{amsrefs}
\usepackage{graphicx}
\usepackage{framed}
\usepackage{amssymb}
\usepackage{esvect}
\usepackage{xcolor}
\usepackage{eucal}
\usepackage{mathrsfs}
\usepackage[colorlinks=true,linkcolor=blue,urlcolor=blue]{hyperref}
\usepackage[english]{babel}
\usepackage{mathrsfs}
\usepackage{esint}

\def \N {\mathbb{N}}
\def \R {\mathbb{R}}

\theoremstyle{definition}
\newtheorem{definition}{Definition}[section]

\theoremstyle{plain}
\newtheorem{theorem}[definition]{Theorem}
\newtheorem{proposition}[definition]{Proposition}
\newtheorem{lemma}[definition]{Lemma}
\newtheorem{corollary}[definition]{Corollary}

\renewcommand{\emptyset}{\varnothing}

\numberwithin{equation}{section}

\usepackage{geometry}
\renewcommand{\epsilon}{\varepsilon}
\newcommand{\e}{\varepsilon}

\renewcommand{\leq}{\leqslant}
\renewcommand{\le}{\leqslant}
\renewcommand{\geq}{\geqslant}
\renewcommand{\ge}{\geqslant}

\allowdisplaybreaks

\title[Nonlocal eigenvalue problems]{Nonlocal eigenvalue problems \\
and superposition operators}

\author{Serena Dipierro, Edoardo Proietti Lippi, Caterina Sportelli and Enrico Valdinoci}

\address{Serena Dipierro: Department of Mathematics and Statistics, The University of Western Australia, 35 Stirling Highway, Crawley, Perth, WA 6009, Australia}
\email{serena.dipierro@uwa.edu.au}

\address{Edoardo Proietti Lippi: Department of Mathematics and Statistics, The University of Western Australia, 35 Stirling Highway, Crawley, Perth, WA 6009, Australia}
\email{edoardo.proiettilippi@uwa.edu.au}

\address{Caterina Sportelli: Departamento de Análisis Matemático, Universidad de Granada, 18071 Granada, Spain \\\&\\ Department of Mathematics and Statistics, The University of Western Australia, 35 Stirling Highway, Crawley, Perth, WA 6009, Australia}
\email{caterina.sportelli@uwa.edu.au, caterina.sp@ugr.es}

\address{Enrico Valdinoci: Department of Mathematics and Statistics, The University of Western Australia, 35 Stirling Highway, Crawley, Perth, WA 6009, Australia}
\email{enrico.valdinoci@uwa.edu.au}

\begin{document}

\maketitle

\begin{abstract}
We study the spectral theory of mixed local and nonlocal operators with lower-order terms in the right-hand side of the equation. 

This kind of problems is motivated by the
analysis of superposition operators of mixed order and with the ``wrong sign''
of the lower-order terms
with respect to the classical elliptic theory.

Our results include: 
\begin{itemize}
\item convergence to classical cases when the right-hand side of the eigenvalye equations ``localizes'', recovering the simplicity and sign-definiteness of eigenfunctions in the limit;
\item a detailed analysis of disconnected domains, showing that, unlike the classical case, any eigenfunction associated with the first eigenvalue must change sign, and that the first eigenvalue of a union of disconnected domains is strictly smaller than that of its individual components;
\item examples in which the first eigenvalue is either simple or non-simple in disconnected domains;
\item a regularity theory that underpins these results. \end{itemize}
\end{abstract}

\tableofcontents

\section{Introduction} 

This article considers superpositions of linear operators of fractional order. Such superpositions may be either discrete or continuous (that is, they may involve finitely or infinitely many operators, possibly distributed according to a measure). Moreover, lower-order operators are allowed to appear with the ``wrong'' sign. Namely, while the higher-order operators in the superposition exhibit an elliptic structure, the lower-order ones may appear with the opposite sign.

This feature is particularly interesting from a modeling perspective, as it allows one to describe the overlap of diffusive phenomena (possibly subject to anomalous diffusion of fractional type) with singularity formation or concentration effects (arising from the presence of elliptic operators with the opposite sign).

The study of these superposition operators naturally leads to certain nonlocal equations reminiscent of eigenvalue problems. A notable structural difference with respect to the classical setting, however, is that the right-hand side of these equations does not consist solely of the function itself multiplied by the corresponding eigenvalue. Instead, it involves a superposition of lower-order operators carrying the ``wrong'' sign.

While there is an extensive literature on nonlocal superposition operators
(see e.g.~\cite{MR2129093, MR2244602, MR2542727, MR2653895, MR2911421, MR2912450, MR3485125, MR4387204, MR4793906, MR4391102, DPSV2, MR4845869},  \cite{TUTTI, TUTTI2, TUTTI3}), also in view
of problems arising in mathematical biology and population dynamics
(see~\cite{MR4651677, MR4249816, DPLSVlog}),
the study of eigenvalue problems of this type appears to be new, to the best of our knowledge. 

We also recall that the analysis of fractional eigenvalue problems,
even in the presence of a single nonlocal operator, already presents a number of unexpected challenges, 
see for example~\cite{KASSIL}.
Also, nonlocal eigenvalue problems typically present interesting features in disconnected domains that are structurally different from their nonlocal counterpart. For example, the sum of eigenfunctions supported
in two different connected components does not produce an eigenfunction in the union of the connected components (not even if the eigenvalue is the same for both connected component, nor if the two connected components are congruent, the reason being that the fractional Laplacian of the eigenfunction of one component does not necessarily vanish in the other connected component).
\medskip

In this paper, our main focus is the spectral theory of mixed local and nonlocal operators, with a superposition of lower-order terms in the right-hand side of the equation. We obtain:
\begin{itemize}
\item results in the spirit of perturbation theory, stating the convergence to classical cases when the right-hand side of the equation ``localizes'' (Theorem~\ref{main1}), and recovering the simplicity of the eigenvalues and the sign-definiteness of the eigenfunction from the limit case (Theorem~\ref{simple>0});
\item a thorough analysis of the case of disconnected domains, establishing that, differently from the classical case, any eigenfunction associated with the first eigenvalue must change sign (Theorem~\ref{propcambiosegno}), and that the first eigenvalue in the union of disconnected domains is strictly smaller than that of its individual components (Theorem~\ref{ATTEHAMS});
\item also, in the case of disconnected domains, examples in which the first eigenvalue is or is not simple (Theorems~\ref{lambdasnonsemplice} and~\ref{lambdassemplice});
\item a convenient regularity theory used in the proofs of the above results (Theorem~\ref{thmregolarità}).
\end{itemize}
\medskip

The specific mathematical framework in which we work is as follows.
Let $\mu^+$ and $\mu^-$ denote two nonnegative finite Borel measures over the interval~$[0, 1]$. 
We consider the signed measure
\begin{equation}\label{misure+-}
\mu:=\mu^+ -\mu^-.
\end{equation}
The linear superposition of (possibly) fractional operators that we take into account in this paper
occurs through operators of the form
\begin{equation}\label{superpositionoperator}
L_\mu u:=\int_{[0, 1]} (-\Delta)^s u\, d\mu(s),
\end{equation}

As customary, the notation~$(-\Delta)^s$ is reserved to the fractional Laplacian, defined, for all~$s\in(0,1)$, as
\begin{equation}\label{AMMU0} (- \Delta)^s\, u(x) = c_{N,s}\int_{\R^N} \frac{2u(x) - u(x+y)-u(x-y)}{|y|^{N+2s}}\, dy.
\end{equation}
The positive normalizing constant~$c_{N,s}$, defined by
\begin{equation}\label{defcNs}
c_{N,s}:=\frac{2^{2s-1}\,\Gamma\left(\frac{N+2s}{2}\right)}{\pi^{N/2}\,\Gamma(2-s)}\, s(1-s),
\end{equation}
is chosen in such a way that, for~$u$ smooth and rapidly decaying, the Fourier transform of~$(-\Delta)^s u$ returns~$(2\pi|\xi|)^{2s}$ times the Fourier transform of~$u$ and provides consistent limits as~$s\nearrow1$ and as~$s\searrow0$, namely
\[
\lim_{s\nearrow1}(-\Delta)^su=(-\Delta)^1u=-\Delta u
\qquad{\mbox{and}}\qquad
\lim_{s\searrow0}(-\Delta)^s u=(-\Delta)^0u=u,
\]
see e.g.~\cite{MR2944369}. 

We assume that there exist~$\overline s \in (0, 1]$ such that
\begin{equation}\label{mu0}
\mu^+\big([\overline s, 1]\big)>0
\qquad \mbox{and} \qquad
\mu^-\big([\overline s, 1]\big)=0.
\end{equation}
Roughly speaking, condition~\eqref{mu0} states that the higher values of the interval~$[0,1]$
(specifically, the ones above~$\overline s$)
are contained in the support of positive component of the measure~$\mu$.

Given a bounded open set~$\Omega$ with boundary of class~$C^1$,
our first set of results deals with the type problem
\begin{equation}\label{eigenvalueproblemmu}
\begin{cases}
\displaystyle\int_{[0, 1]} (-\Delta)^s u\, d\mu^+(s) = \lambda \int_{[0, \overline{s})} (-\Delta)^s u\, d\mu^-(s)  &  \mbox{ in } \Omega,
\\
u=0 &\mbox{ in } \R^N \setminus \Omega.
\end{cases}
\end{equation}

This can be seen as a ``doubly-nonlocal'' eigenvalue problem,
with the right-hand-side containing operators of ``lower order''
(in the classical eigenvalue problems, the right-hand side contains
simply an operator of order zero).

To start with, we establish the existence of a positive eigenvalue for problem \eqref{eigenvalueproblemmu}. In order to state our result, we introduce the space $X^+(\Omega)$ defined as the completion
of the compactly supported smooth functions with respect to a norm which involves the measure $\mu^+$ (for more details see the forthcoming definitions in \eqref{normadefinizione} and \eqref{X+Omegadefn}). In this setting, we have:

\begin{proposition}\label{lambda1}
Assume that \eqref{mu0} holds.

Then, problem~\eqref{eigenvalueproblemmu} admits a first
positive eigenvalue $\lambda_{\mu}(\Omega)$ satisfying
\begin{equation}\label{lambdamuOmega}
\lambda_{\mu}(\Omega):= \min_{u\in X^+(\Omega)\setminus\{0\}} \dfrac{\displaystyle\int_{[0, 1]} [u]^2_s \, d\mu^+(s)}{\displaystyle \int_{[0, \overline{s})} [u]^2_s \, d\mu^-(s)}.
\end{equation}
\end{proposition}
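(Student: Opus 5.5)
The plan is to apply the direct method of the calculus of variations to the Rayleigh quotient in~\eqref{lambdamuOmega}. The key ingredient is compactness of the embedding that controls the denominator by the numerator.

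Write $\mathcal N(u):=\int_{[0,1]}[u]_s^2\,d\mu^+(s)$, which is the square of the norm of $X^+(\Omega)$, and $\mathcal D(u):=\int_{[0,\overline s)}[u]_s^2\,d\mu^-(s)$. Here $[u]_s$ denotes the Gagliardo seminorm. For $s=1$ it is read as $\|\nabla u\|_{L^2}$, and for $s=0$ as $\|u\|_{L^2}$, consistently with the normalization~\eqref{defcNs}.

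First I show that $\mathcal D(u)\le C\,\mathcal N(u)$ on $X^+(\Omega)$, so that the quotient is bounded below by $1/C>0$.

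\begin{itemize}
\item By~\eqref{mu0}, $\mu^+([\overline s,1])>0$. Using the monotonicity in $s$ of $[u]_s^2$ for functions supported in $\Omega$ (up to constants depending on $N$ and $\Omega$), I get $[u]_{\overline s}^2\le C\,\mathcal N(u)$ after averaging over $[\overline s,1]$ against $\mu^+$.
\item In Fourier variables, $[u]_s^2=\int(2\pi|\xi|)^{2s}|\hat u|^2\,d\xi$, and $(2\pi|\xi|)^{2s}\le 1+(2\pi|\xi|)^{2\overline s}$ for every $s\le\overline s$. Hence $[u]_s^2\le \|u\|_{L^2}^2+[u]_{\overline s}^2$ uniformly in $s\in[0,\overline s]$.
\item The fractional Poincaré inequality on the bounded set $\Omega$ gives $\|u\|_{L^2}^2\le C[u]_{\overline s}^2$.
\item Since $\mu^-$ is finite, integrating yields $\mathcal D(u)\le C\mu^-([0,1])\,\mathcal N(u)$.
\end{itemize}

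In particular, $\lambda_\mu(\Omega)\ge 1/C>0$ is well defined as an infimum. I should also check that the infimum is not taken over an empty set. If $\mu^-([0,\overline s))=0$ there is nothing to prove, or one sets $\lambda=+\infty$; the relevant case is $\mu^-\neq0$, where $\mathcal D(u)>0$ for every $u\neq0$.

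Next, take a minimizing sequence normalized by $\mathcal D(u_k)=1$. Then $\mathcal N(u_k)$ is bounded, so by reflexivity of the Hilbert space $X^+(\Omega)$ a subsequence converges weakly to some $u$, and weak lower semicontinuity gives $\mathcal N(u)\le\liminf\mathcal N(u_k)=\lambda_\mu(\Omega)$.

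The main obstacle is passing to the limit $\mathcal D(u_k)\to\mathcal D(u)$, which requires compactness uniformly over $s\in[0,\overline s)$. I would fix $\varepsilon>0$ and split frequencies at $|\xi|=R$:
\[
\mathcal D(u_k-u)\le \mu^-([0,1])\Big[(1+(2\pi R)^{2\overline s})\|u_k-u\|_{L^2}^2+\sup_{|\xi|>R}(2\pi|\xi|)^{2s-2\overline s}\,[u_k-u]^2_{\overline s}\Big].
\]
The delicate point is that this only works for $s$ away from $\overline s$, while $\mu^-$ may charge a neighbourhood of $\overline s$. So I would first choose $\delta$ with $\mu^-([\overline s-\delta,\overline s))<\varepsilon$, which is possible because $\mu^-([\overline s,1])=0$ and $\mu^-$ is countably additive. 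On that tail I bound the contribution by $\varepsilon\sup_k\mathcal N(u_k-u)$ using the uniform estimate from the first step. On $[0,\overline s-\delta]$ the high-frequency factor is at most $(2\pi R)^{-2\delta}$, which is small for $R$ large. The low-frequency part tends to zero because $X^+(\Omega)$ embeds in $H^{\overline s}_0(\Omega)$, which embeds compactly in $L^2(\Omega)$ (Rellich, since $\Omega$ is bounded with $C^1$ boundary). Since $\mathcal D^{1/2}$ is a seminorm, this gives $\mathcal D(u)=1$, so $u$ is a minimizer and the infimum is attained.

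Finally, for any $\varphi\in X^+(\Omega)$ the function $t\mapsto \mathcal N(u+t\varphi)-\lambda_\mu(\Omega)\mathcal D(u+t\varphi)$ has a minimum at $t=0$. Its derivative at $t=0$ gives the weak form of~\eqref{eigenvalueproblemmu} with $\lambda=\lambda_\mu(\Omega)$. For every weak eigenpair, testing with $u$ itself gives $\lambda=\mathcal N(u)/\mathcal D(u)\ge\lambda_\mu(\Omega)$, so this eigenvalue is the first one, and it is positive by the first step.
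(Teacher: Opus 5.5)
Your proposal is correct and follows the same direct-method route as the paper. You minimize the numerator on the constraint $\int_{[0,\overline s)}[u]_s^2\,d\mu^-(s)=1$, extract a weakly convergent subsequence in $X^+(\Omega)$, pass to the limit in the constraint, use lower semicontinuity, and obtain the weak equation by a first variation. The paper gets the convergence of the denominator by citing \cite[Lemma 2.8]{DPLSVlog}, whereas you prove it by hand through frequency splitting plus the small $\mu^-$-mass of $[\overline s-\delta,\overline s)$. You also make explicit the bound $\mathcal D\le C\,\mathcal N$ and why $\lambda_\mu(\Omega)$ is the first eigenvalue.
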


In order to stete our next result, we introduce some additional notations.
Let $\lambda_0\in\R$ denote the first eigenvalue of the problem
\begin{equation}\label{lambda0u0819e8ufjvo}
\begin{cases}
\displaystyle\int_{[0, 1]} (-\Delta)^s u\, d\mu^+(s) = \lambda u&  \mbox{ in } \Omega,\\
u=0 &  \mbox{ in } \R^N \setminus \Omega.
\end{cases}
\end{equation}
If~$u_0$ is one of the corresponding eigenfunctions and we use the normalization~ $\|u_0\|_{L^2(\Omega)}=1$, the variational characterization of $\lambda_0$ gives
\begin{equation}\label{lambda0u0}
\lambda_0:=\min_{u\in X^+(\Omega)\setminus\{0\}} \dfrac{\displaystyle\int_{[0, 1]} [u]^2_s \, d\mu^+(s)}{\displaystyle \int_\Omega |u(x)|^2\,dx}=\int_{[0, 1]} [u_0]^2_s \, d\mu^+(s).
\end{equation}
Now, we let $\varepsilon_n$ be a decreasing sequence such that 
\begin{equation}\label{epsnto0}
\lim_{n\to +\infty} \varepsilon_n =0.
\end{equation}
In addition, we consider a sequence $\mu^-_n$ of finite Borel measures over the set $[0, 1]$ such that for any $n\in\N$,
\begin{align}\label{successionemu-1}
& \mbox{supp}(\mu^-_n) \subset [0, \varepsilon_n]\\ \label{successionemu-2}
\mbox{and}\quad &\mu^-_n ([0, \varepsilon_n]) = 1.
\end{align}
In a nutshell, we are supposing here that~$\mu^-_n$ is a sequence of probability measures
supported in a small right interval of the origin.
In light of 
\eqref{successionemu-1}, \eqref{successionemu-2} and
Proposition~\ref{lambda1},  for any $n\in \N$ we can consider the eigenvalue problem
\begin{equation}\label{probsuccessione}
\begin{cases}
\displaystyle\int_{[0, 1]} (-\Delta)^s u\, d\mu^+(s) = \lambda \int_{[0, \varepsilon_n]} (-\Delta)^s u\, d\mu_n^-(s)  &  \mbox{ in } \Omega,
\\
u=0    &  \mbox{ in } \R^N \setminus \Omega.
\end{cases}
\end{equation}
For any $n\in\N$, we denote by $\lambda_{\varepsilon_n}$ the first eigenvalue associated with problem \eqref{probsuccessione} and by $u_{\varepsilon_n}$ one of the eigenfunctions associated with~$\lambda_{\varepsilon_n}$.
Up to a normalization, we can assume that 
\begin{equation}\label{lambdaen1}
\int_{[0, \varepsilon_n]}[u_{\varepsilon_n}]^2_s\,d\mu^-_n(s)=1.
\end{equation}
Hence, the variational characterization of $\lambda_{\varepsilon_n}$ gives
\begin{equation}\label{lambdaen2}
\lambda_{\varepsilon_n}:=\min_{u\in X^+(\Omega)\setminus\{0\}} \dfrac{\displaystyle\int_{[0, 1]} [u]^2_s \, d\mu^+(s)}{\displaystyle \int_{[0,\varepsilon_n] } [u]^2_s \, d\mu^-_n(s)}
=\int_{[0, 1]} [u_{\varepsilon_n}]^2_s \, d\mu^+(s).
\end{equation}

In this scenario, our next result gives some convergence properties for the first eigenvalue of problem \eqref{probsuccessione} and for its related eigenfunctions. It reads as follows:

\begin{theorem}\label{main1}
Assume that \eqref{mu0}, \eqref{successionemu-1} and \eqref{successionemu-2} hold. 

Let~$\lambda_0$ be the first eigenvalue of~\eqref{lambda0u0819e8ufjvo}.

Let also~$\lambda_{\varepsilon_n}$ and $u_{\varepsilon_n}$ be as in \eqref{lambdaen1} and \eqref{lambdaen2}.

Then, \begin{equation}\label{statement12-PRIMO}
\lim_{n\to +\infty} \lambda_{\varepsilon_n} =\lambda_0.\end{equation}

Furthermore,
there exists an eigenfunction~$u_0$ corresponding to the eigenvalue~$\lambda_0$ such that, up to a subsequence,
\begin{equation}\label{statement121P0EIDFJWEOBGML}
{\mbox{$u_{\varepsilon_n}$ converges weakly to $u_0$ in $X^+(\Omega)$}}\end{equation}
and
\begin{equation}\label{statement12}
{\mbox{$u_{\varepsilon_n}$ converges strongly to~$u_0$ in~$L^2(\Omega)$.}}
\end{equation}
\end{theorem}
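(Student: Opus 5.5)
Since $\mu^-_n$ is a probability measure concentrating at $s=0$ and $(-\Delta)^0u=u$, we expect $[u]_s^2$ to approach $\|u\|_{L^2(\Omega)}^2$ uniformly as $s\searrow0$. The plan is to prove this uniform convergence on a bounded set of $X^+(\Omega)$ and then pass to the limit in the two Rayleigh quotients \eqref{lambda0u0} and \eqref{lambdaen2}.

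Throughout, the seminorm is taken with the Fourier normalisation $[u]_s^2=\int_{\R^N}(2\pi|\xi|)^{4s}\,\ldots$; more precisely $[u]^2_s=\int_{\R^N}(2\pi|\xi|)^{2s}|\hat u(\xi)|^2\,d\xi$, with $[u]_0^2=\|u\|^2_{L^2}$. The key estimate, for $u\in X^+(\Omega)$ and $s\in[0,\e]$ with $\e\le\overline s$, is obtained by splitting in frequency at $|\xi|=R$:
\[
\big|[u]_s^2-\|u\|_{L^2}^2\big|\le \sup_{|\xi|\le R}\big|(2\pi|\xi|)^{2s}-1\big|\,\|u\|_{L^2}^2+\int_{|\xi|>R}\big((2\pi|\xi|)^{2s}+1\big)|\hat u|^2 .
\]
The first term is at most $\omega(\e,R)\|u\|^2$ with $\omega(\e,R)\to0$ as $\e\to0$ for fixed $R$. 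Here some care is needed near $\xi=0$: $(2\pi|\xi|)^{2s}\le 1$ there, so the difference is bounded by $1$, and we instead use that $\hat u$ is bounded because $u\in L^1(\Omega)$ with $\Omega$ bounded, so the region $|\xi|<\delta$ contributes $O(\delta^N)\|u\|_{L^2}^2$. For $s\le\e$ and $\e$ small, the second term is bounded by $C R^{2\e-2\overline s}[u]^2_{\overline s}$. Finally, \eqref{mu0} ensures that $\int_{[0,1]}[u]_s^2\,d\mu^+$ controls $[u]_{\overline s}^2$ and $\|u\|^2_{L^2}$: since $[u]_t^2\le C([u]^2_{\overline s}+\|u\|^2)$ for $t\ge\overline s$ by interpolation, we get $[u]_{\overline s}^2\le C([u]_t^2+\|u\|^2)$, and combined with a Poincaré-type bound this gives the control. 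The conclusion is
\[
\Big|\int_{[0,\e_n]}[u]_s^2\,d\mu_n^-(s)-\|u\|^2_{L^2(\Omega)}\Big|\le \eta_n\int_{[0,1]}[u]_s^2\,d\mu^+(s),\qquad \eta_n\to0 .
\]
Establishing this uniform estimate, and in particular justifying the norm equivalence in $X^+(\Omega)$, is the main obstacle.

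With the estimate in hand, the eigenvalue convergence is immediate. Testing \eqref{lambdaen2} with $u_0$ gives $\lambda_{\e_n}\le\lambda_0/(1-\lambda_0\eta_n)$, so $\limsup\lambda_{\e_n}\le\lambda_0$. In the other direction, $\|u\|^2\le \int[u]_s^2\,d\mu_n^-+\eta_n\int[u]^2_s\,d\mu^+$, and hence $\lambda_0\le\lambda_{\e_n}/(1-\eta_n\lambda_{\e_n})$. Since $\lambda_{\e_n}$ is bounded, this yields $\liminf\lambda_{\e_n}\ge\lambda_0$ and proves \eqref{statement12-PRIMO}.

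For the eigenfunctions, the normalisation \eqref{lambdaen1} together with \eqref{lambdaen2} shows that $u_{\e_n}$ is bounded in $X^+(\Omega)$. Up to a subsequence, $u_{\e_n}$ therefore converges weakly to some $u_0$ in $X^+(\Omega)$. Because $X^+(\Omega)$ embeds compactly into $L^2(\Omega)$ (via the $H^{\overline s}$ control and the Rellich–Kondrachov theorem for fractional Sobolev spaces on bounded domains), the convergence is strong in $L^2$. The uniform estimate gives $\|u_{\e_n}\|_{L^2}^2=1+O(\eta_n)\to1$, so $\|u_0\|_{L^2}=1$. Lower semicontinuity of the norm then gives $\int[u_0]^2_s\,d\mu^+\le\liminf\lambda_{\e_n}=\lambda_0$. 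Hence $u_0$ attains the minimum in \eqref{lambda0u0}, and is therefore an eigenfunction for $\lambda_0$, which proves \eqref{statement121P0EIDFJWEOBGML} and \eqref{statement12}.
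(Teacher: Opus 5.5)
Your proof is correct. Its skeleton matches the paper's: test $\lambda_{\varepsilon_n}$ with a normalized first eigenfunction of \eqref{lambda0u0819e8ufjvo}, test \eqref{lambda0u0} with $u_{\varepsilon_n}$, and extract a weak limit in $X^+(\Omega)$ with $L^2$ norm one. The two key ingredients, however, differ.

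First, the paper proves no uniform estimate. It proves only the sequential statement of Lemma~\ref{lemma2} (with Corollaries~\ref{lemma1} and~\ref{cor=1}) for weakly convergent sequences. There the low frequencies are controlled through an $L^2$ dominating function for $\widehat{u_n}$, which the partial converse of dominated convergence supplies only along a subsequence. Your bound $\|\widehat u\|_{L^\infty}\le\|u\|_{L^1}\le|\Omega|^{1/2}\|u\|_{L^2(\Omega)}$ makes the low-frequency part uniform on all of $X^+(\Omega)$. This pays off: you get the two-sided bounds $\lambda_0/(1+\eta_n\lambda_0)\le\lambda_{\varepsilon_n}\le\lambda_0/(1-\eta_n\lambda_0)$, and hence convergence of the full sequence. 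The paper instead sets $\lambda_*:=\lim\lambda_{\varepsilon_n}$ without arguing that this limit exists.

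Second, you identify the limit as a minimizer of \eqref{lambda0u0} via weak lower semicontinuity. The paper instead passes to the limit in the weak formulation against $v\in C^\infty_c(\Omega)$, using $\int_{[0,\varepsilon_n]}\|(-\Delta)^sv-v\|_{L^2(\R^N)}\,d\mu^-_n(s)\to0$. Your route also gives $\|u_{\varepsilon_n}\|^2_{X^+}=\lambda_{\varepsilon_n}\to\lambda_0=\|u_0\|^2_{X^+}$ for free, so the convergence is in fact strong in $X^+(\Omega)$. The paper's route does not use minimality, so it would identify limits of any bounded sequence of eigenpairs, not only first ones.

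Two local slips should be fixed.

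\begin{itemize}
\item The inequality $[u]_t^2\le C([u]^2_{\overline s}+\|u\|^2_{L^2})$ for $t\ge\overline s$ is false, since a higher seminorm is not controlled by a lower one. The inequality you then draw from it is true, but it does not follow from that premise. Instead use Lemma~\ref{nons}, which gives $\|u\|_{L^2}\le C[u]_{\overline s}$ and $\mu^+([\overline s,1])\,[u]^2_{\overline s}\le C^2\int_{[\overline s,1]}[u]^2_t\,d\mu^+(t)$. By \eqref{mu0}, this bounds both $\|u\|^2_{L^2}$ and $[u]^2_{\overline s}$ by a multiple of $\|u\|^2_{X^+}$.
\item For the lower bound on $\lambda_{\varepsilon_n}$ you quoted the wrong half of your two-sided estimate. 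The step needs $\|u_{\varepsilon_n}\|^2_{L^2(\Omega)}\ge1-\eta_n\lambda_{\varepsilon_n}$, or, for arbitrary $u$, $\int_{[0,\varepsilon_n]}[u]^2_s\,d\mu^-_n(s)\le(\lambda_0^{-1}+\eta_n)\|u\|^2_{X^+}$. Your estimate provides both.
\end{itemize}
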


The result in Theorem~\ref{main1} fits into the broad framework of perturbation theory for linear operators. We present a direct approach to its proof; more abstract versions can be obtained from the general theory (see for example~\cite{MR1335452}).

When the positive part of the measure $\mu$ is the Dirac delta centered in 1 (or more generally $\mu^+\big([0,1)\big)=0$), the space $X^+(\Omega)$ coincides with $H^1(\Omega)$ and problem \eqref{probsuccessione} boils down to
\begin{equation}\label{probsuccessionefacile}
\begin{cases}
\displaystyle -\Delta u= \lambda \int_{[0, \varepsilon_n]} (-\Delta)^s u\, d\mu_n^-(s)  &  \mbox{ in } \Omega,
\\
u=0    &  \mbox{ in } \R^N \setminus \Omega.
\end{cases}
\end{equation}
In this scenario, if $\Omega$ is connected,
by relying on Theorem \ref{main1} we can prove that the first eigenvalue found in 
Proposition~\ref{lambda1} is simple and the first eigenfunction can be 
taken to be positive. 
More explicitly, we have:

\begin{theorem}\label{simple>0}
Assume that \eqref{mu0}, \eqref{successionemu-1} and \eqref{successionemu-2} hold. Assume also that $\Omega$ is connected and that $\mu^+=\delta_1$. 

Then, there exists $n_0\in \N$ such that, for any $n>n_0$,
the first eigenvalue of problem \eqref{probsuccessionefacile}, defined as
\[
\lambda_{\varepsilon_n}:=\min_{u\in H^1(\Omega)\setminus\{0\}} \dfrac{\displaystyle \int_\Omega |\nabla u(x)|^2\,dx}{\displaystyle \int_{[0,\varepsilon_n] } [u]^2_s \, d\mu^-_n(s)},
\]
is simple. 

In addition, the eigenfunction $u_{\varepsilon_n}$ corresponding to $\lambda_{\varepsilon_n}$ does not change sign,
namely it is either strictly positive or strictly negative in $\Omega$.
\end{theorem}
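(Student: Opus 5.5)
We argue by contradiction and exploit the convergence in Theorem~\ref{main1}. Since $\mu^+=\delta_1$, the numerator is the Dirichlet integral, so $\lambda_0$ is the first Dirichlet eigenvalue of $-\Delta$ in $\Omega$. Because $\Omega$ is connected, $\lambda_0$ is simple and its eigenfunction $u_0$ (normalized with $\|u_0\|_{L^2(\Omega)}=1$) can be taken strictly positive in $\Omega$.

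\emph{Simplicity.} Suppose that, along a subsequence, the eigenspace of $\lambda_{\varepsilon_n}$ has dimension at least two. Inside the two-dimensional space we choose two eigenfunctions $v_n,w_n$ that are orthonormal with respect to the bilinear form $B_n(u,v):=\int_{[0,\varepsilon_n]}\langle u,v\rangle_s\,d\mu^-_n(s)$, where $\langle\cdot,\cdot\rangle_s$ is the Gagliardo inner product. Each of them is a minimizer of the Rayleigh quotient, with $\int_\Omega|\nabla v_n|^2=\int_\Omega|\nabla w_n|^2=\lambda_{\varepsilon_n}\to\lambda_0$.

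Applying the argument of Theorem~\ref{main1} to both sequences, we get $H^1$-weak and $L^2$-strong limits $v,w$, which are first Dirichlet eigenfunctions. The key auxiliary fact is that
\[
B_n(u_n,z_n)\to\int_\Omega u\,z\,dx
\]
whenever $u_n,z_n$ are bounded in $H^1_0(\Omega)$ and converge strongly in $L^2(\Omega)$. This holds because, for $s\le\varepsilon_n\to0$, the symbol $(2\pi|\xi|)^{2s}$ is close to $1$ uniformly on compact frequency sets. For high frequencies we use $(2\pi|\xi|)^{2s}\le C(1+|\xi|^2)^{\varepsilon_n}$ together with the $H^1$ bound and the $L^2$-strong convergence, which gives equi-integrability in Fourier space. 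Using the Fourier representation $[u]_s^2=\int(2\pi|\xi|)^{2s}|\hat u|^2$ and integrating against the probability measure $\mu_n^-$, this yields the claim. The same fact is presumably what underlies Theorem~\ref{main1}.

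The limits therefore satisfy $\int v^2=\int w^2=1$ and $\int vw=0$. So $v$ and $w$ are two orthonormal first Dirichlet eigenfunctions, which contradicts the simplicity of $\lambda_0$.

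\emph{Sign.} Fix the sign of $u_{\varepsilon_n}$ so that $u_{\varepsilon_n}\to u_0>0$. By simplicity, every subsequence has the same limit up to sign, and the normalization fixes the sign. We must show that $u_{\varepsilon_n}>0$ in $\Omega$ for $n$ large, and this is the main obstacle: $L^2$ convergence alone does not prevent small negative regions, for instance near $\partial\Omega$.

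The plan is to upgrade the convergence to $C^1(\overline\Omega)$ by means of the regularity theory (Theorem~\ref{thmregolarità}), applied to
\[
-\Delta u_{\varepsilon_n}=\lambda_{\varepsilon_n}\int(-\Delta)^s u_{\varepsilon_n}\,d\mu^-_n.
\]
Here the right-hand side is an operator of order at most $2\varepsilon_n$, so it is controlled in some $L^p$ or $C^{\alpha}$ norm by $u_{\varepsilon_n}$ itself. Bootstrapping then gives uniform $C^{1,\alpha}(\overline\Omega)$ bounds, using the $C^1$ boundary of $\Omega$. By Arzel\`a--Ascoli we obtain $u_{\varepsilon_n}\to u_0$ in $C^1(\overline\Omega)$.

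Since $u_0>0$ in $\Omega$ and, by Hopf's lemma, $\partial_\nu u_0<0$ on $\partial\Omega$, the $C^1$ convergence implies:
\begin{itemize}
\item near the boundary, $\partial_\nu u_{\varepsilon_n}<0$, so $u_{\varepsilon_n}>0$ in a uniform collar of $\partial\Omega$;
\item on the interior compact set left over, $u_0\ge c>0$, so uniform convergence gives $u_{\varepsilon_n}>0$ there.
\end{itemize}
Hence $u_{\varepsilon_n}>0$ in $\Omega$ for $n>n_0$, and with the opposite normalization it is strictly negative. If uniform $C^{1,\alpha}$ estimates up to the boundary turn out to be unavailable, we would instead try a comparison argument: near the boundary, $(-\Delta)^s u_{\varepsilon_n}$ would be treated as a small perturbation absorbed into a barrier built from $u_0$.
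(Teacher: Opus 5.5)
Your argument is correct. For the sign statement it follows the paper's route. You get uniform $C^{1,\alpha}$ bounds from the regularity theory of Section~\ref{oajdcfwe9rtm92-34v50nVGm504r023902}, applied with $f=0$, $\widetilde s=1/4$ and the signed measure $-\lambda_{\varepsilon_n}\mu^-_n$; its mass is bounded uniformly in $n$ because $\lambda_{\varepsilon_n}\to\lambda_0$. You then use Ascoli--Arzel\`a, the Hopf lemma for $u_0$, a uniform boundary collar (Lemma~\ref{lemmavicinobordo}), and uniform convergence in the interior.

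Where you genuinely differ is in the proof of simplicity and in the order of the two steps. The paper first shows that every first eigenfunction has a strict sign for large $n$. It then argues on the difference $w_{\varepsilon_n}=u_{\varepsilon_n}-v_{\varepsilon_n}$ of two positive normalized eigenfunctions. You instead get simplicity directly, by passing to the limit in a $B_n$-orthonormal pair and contradicting the simplicity of the Dirichlet eigenvalue $\lambda_0$.

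Your auxiliary fact $B_n(u_n,z_n)\to\int_\Omega uz\,dx$ is just the polarization of Lemma~\ref{lemma2}, so you can cite it. As you phrased it, though, the symbol $(2\pi|\xi|)^{2s}$ is \emph{not} uniformly close to $1$ near $\xi=0$. That lemma handles a small ball $B_\delta$ separately, and your equi-integrability remark also covers this.

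Your route gives a simplicity proof that uses no regularity and no sign information. It is a pure perturbation statement, valid whenever $\lambda_0$ is simple. The sign step then only concerns one eigenfunction per $n$. You should fix its sign by, e.g., $\int_\Omega u_{\varepsilon_n}u_0\,dx\ge0$, because the $B_n$-normalization does not fix it.

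Your route also avoids a weak point in the paper's version. There the contradiction is drawn between $w_{\varepsilon_n}\to0$ and $\widetilde w_{\varepsilon_n}\to u_0$. But $\widetilde w_{\varepsilon_n}$ is $w_{\varepsilon_n}$ divided by $\int[w_{\varepsilon_n}]_s^2\,d\mu^-_n(s)$, which itself tends to $0$, so the two facts are compatible. To close the paper's route one would instead take an eigenfunction $L^2$-orthogonal to the positive one, which then cannot be one-signed.

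The paper's ordering follows the classical ``one-signed first eigenfunctions imply simplicity'' scheme. Its price is that the regularity theory is needed before simplicity is available.
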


The hypothesis that $\Omega$ is connected
in Theorem~\ref{simple>0} cannot be omitted.
Indeed, when $\mu^-\big((0,\varepsilon_n]\big)>0$, if $\Omega$ is disconnected, then any eigenfunction associated with the first eigenvalue must change
sign, according to the following result:

\begin{theorem}\label{propcambiosegno}
Let~$M\in\N\cap[2,+\infty)$.
Let $\Omega=\Omega_1\cup \cdots \cup \Omega_M$, where,
for any $j=1,\cdots,M$,
$\Omega_j$ is an open, bounded and connected set,
with boundary of class~$C^1$,
and such that~$\overline{\Omega_i}\cap\overline{ \Omega_j}=\emptyset$ if $i\neq j$.

Assume that $\mu^-\big((0,\overline s)\big)>0$ for some $\overline s\in (0,1)$.

Then, any eigenfunction associated with the first eigenvalue of the problem
\begin{equation}\label{xfgyxcfghxcfg}
\begin{cases}
-\Delta u= \lambda\displaystyle \int_{[0,\overline s)}(-\Delta)^s u\,d\mu^-(s) &\mbox{in }\Omega,
\\
u=0 &\mbox{in }\R^N \setminus \Omega,
\end{cases}
\end{equation}
changes sign (namely, it takes both strictly positive and strictly negative values).
\end{theorem}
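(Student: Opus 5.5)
Let $u$ be an eigenfunction for the first eigenvalue $\lambda:=\lambda_\mu(\Omega)$ of \eqref{xfgyxcfghxcfg}, so $u$ minimizes the Rayleigh quotient in \eqref{lambdamuOmega} with $\mu^+=\delta_1$. I will argue by contradiction: suppose $u\ge0$ in $\Omega$ (the case $u\le0$ follows by replacing $u$ with $-u$). The idea is that on disconnected domains the local energy $\int_\Omega|\nabla u|^2$ does not feel the relative sign of $u$ on different components, whereas the nonlocal denominator $[u]_s^2$ contains cross terms between components that can be exploited. Since $u\not\equiv0$, there is a component, say $\Omega_1$, on which $u\not\equiv 0$.

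\emph{Step 1: $u$ is nontrivial on at least two components.} Suppose instead that $u$ vanishes on $\Omega_2\cup\dots\cup\Omega_M$. Then $u$ satisfies the equation pointwise in $\Omega_2$, where $-\Delta u=0$. Hence, for $x\in\Omega_2$, since $u(x)=0$ and $u\ge0$,
\[
(-\Delta)^s u(x)=-c_{N,s}\int_{\Omega_1}\frac{u(y)}{|x-y|^{N+2s}}\,dy<0 \qquad\mbox{for every } s\in(0,\overline s),
\]
while $(-\Delta)^0u(x)=u(x)=0$. Since $\mu^-((0,\overline s))>0$ and $\lambda>0$ (by Proposition~\ref{lambda1}), the right-hand side of \eqref{xfgyxcfghxcfg} is strictly negative in $\Omega_2$, whereas the left-hand side vanishes there. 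This is a contradiction. To justify the pointwise evaluation I will use the regularity theory (Theorem~\ref{thmregolarità}). Alternatively, I can argue weakly: test the equation with a nonnegative $\varphi\in C^\infty_c(\Omega_2)$, so that the bilinear forms reduce to the cross term $-\iint u(y)\varphi(x)|x-y|^{-N-2s}<0$.

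\emph{Step 2: flip the sign on one component.} Now $u\not\equiv0$ on, say, $\Omega_1$ and $\Omega_2$. Define $v:=-u$ on $\Omega_1$ and $v:=u$ elsewhere. Then $v\in H^1_0(\Omega)$, since the closures of the components are disjoint, and $\int|\nabla v|^2=\int|\nabla u|^2$. For the denominator, write $u=u_1+w$ with $u_1:=u\chi_{\Omega_1}$ and $w:=u-u_1$, and expand
\[
[u_1\pm w]_s^2=[u_1]_s^2+[w]_s^2\pm 2\mathcal E_s(u_1,w).
\]
Because $u_1$ and $w$ have disjoint supports,
\[
\mathcal E_s(u_1,w)=-c_{N,s}\iint\frac{u_1(x)w(y)}{|x-y|^{N+2s}}\,dx\,dy .
\]
This quantity is strictly negative for $s\in(0,\overline s)$, because $u_1,w\ge0$ and both are nontrivial; it vanishes at $s=0$. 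Integrating in $d\mu^-$ and using $\mu^-((0,\overline s))>0$ gives
\[
\int[v]_s^2\,d\mu^->\int[u]_s^2\,d\mu^- .
\]
Hence the Rayleigh quotient of $v$ is strictly smaller than $\lambda$, contradicting the minimality in \eqref{lambdamuOmega}.

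The main obstacle is Step 1: it needs either the regularity and strong maximum principle type information, or a careful weak formulation showing that an eigenfunction cannot vanish identically on a component. I expect the test-function argument to be the cleanest route, since it avoids pointwise regularity altogether. A minor point is checking the integrability of the cross terms, which is immediate because the components are at positive distance.
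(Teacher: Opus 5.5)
Your proof is correct and follows the paper's route: first use the equation to rule out that a nonnegative eigenfunction vanishes on a whole component (the paper shows it cannot vanish on any open set by evaluating the equation pointwise there), then flip the sign on $\Omega_1$, which leaves $\int|\nabla u|^2$ unchanged but strictly increases the nonlocal denominator through the negative cross term, contradicting minimality. Your test-function version of Step 1 makes the paper's pointwise evaluation rigorous without regularity (which is just as well, since Theorem~\ref{thmregolarità} needs $\widetilde s<1/2$ while here $\overline s$ may exceed $1/2$). It also proves exactly what the flip requires, namely nontriviality on two components, rather than the strict positivity of $u$ in $\Omega$ that the paper asserts without full justification.
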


The case of disconnected domains is interesting also because
it highlights some structural differences with respect to the classical case.
For instance, the first eigenvalue of the Laplacian in a disconnected domain
coincides with the smallest first eigenvalue among all the possible connected components (see Proposition~\ref{PROPD1}(i))
and if the smallest first eigenvalue
is reached in two connected components then the linear combinations
of the corresponding eigenfunctions in these components (extended
to zero outside) are eigenfunctions for the full domain (see
Proposition~\ref{PROPD1}(v)). This is not the case
in the local setting, as pointed out in the following result.
To state it, we use the notation
$$\lambda(\Omega,\mu^-):=
\min_{u\in H^1(\Omega)\setminus\{0\}} \dfrac{\displaystyle \int_\Omega |\nabla u(x)|^2\,dx}{\displaystyle \int_{[0,1) } [u]^2_s \, d\mu^-_n(s)}.$$

\begin{theorem}\label{ATTEHAMS}
Let~$\Omega_1$, $\Omega_2$ be bounded, connected, open sets,
with boundary of class~$C^1$,
and such that~$\overline{\Omega_1}\cap\overline{ \Omega_2}=\emptyset$.

Suppose that
\begin{equation}\label{0owejdmcwedf:w2ed3rfevrfs7mxwoevi}
\mu^-((0,1))\ne0.\end{equation}
Assume that, for~$j\in\{1,2\}$, 
\begin{equation}\label{0owejdmcwedf:w2ed3rfevrf}
{\mbox{the eigenvalue~$\lambda(\Omega_j,\mu^-)$
admits a positive eigenfunction~$u_{\Omega_j}$.}}\end{equation}

Then,
\begin{equation}\label{0owejdmcwedf:w2ed3rfevrf019kdcX3} \lambda(\Omega_1\cup\Omega_2,\mu^-)<\min\big\{
\lambda(\Omega_1,\mu^-),\lambda(\Omega_2,\mu^-)\big\}\end{equation}
and \begin{equation}\label{0owejdmcwedf:w2ed3rfevrf9}\begin{split}&{\mbox{a nonnegative linear combination of~$u_{\Omega_1}$ and~$u_{\Omega_2}$}}\\&{\mbox{is never an eigenfunction corresponding to~$ \lambda(\Omega_1\cup\Omega_2,\mu^-)$.}}\end{split}\end{equation}
\end{theorem}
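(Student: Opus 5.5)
Write $u_j:=u_{\Omega_j}$, extended by zero outside $\Omega_j$ and normalized so that $\int_{[0,1)}[u_j]_s^2\,d\mu^-(s)=1$. Then $\int|\nabla u_j|^2=\lambda(\Omega_j,\mu^-)=:\lambda_j$. We may assume $\lambda_1\le\lambda_2$. The plan is to test the Rayleigh quotient on $\Omega_1\cup\Omega_2$ with combinations $v=a u_1 - b u_2$, $a,b\ge0$, and to exploit the cross term.

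Since the supports are disjoint, $\int|\nabla v|^2=a^2\lambda_1+b^2\lambda_2$. For the denominator, expand $[v]_s^2=a^2[u_1]_s^2+b^2[u_2]_s^2-2ab\,\mathcal E_s(u_1,u_2)$, where
\[
\mathcal E_s(u_1,u_2)=\iint\frac{(u_1(x)-u_1(y))(u_2(x)-u_2(y))}{|x-y|^{N+2s}}\,dx\,dy .
\]
For $s\in(0,1)$ and disjoint supports, this equals $-2\int_{\Omega_1}\int_{\Omega_2}\frac{u_1(x)u_2(y)}{|x-y|^{N+2s}}\,dx\,dy$, up to the normalizing constant. This is strictly negative because $u_1,u_2>0$ on sets of positive measure. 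For $s=0$ the cross term is $\int u_1u_2=0$. Using \eqref{0owejdmcwedf:w2ed3rfevrfs7mxwoevi}, set
\[
\kappa:=-\int_{[0,1)}\mathcal E_s(u_1,u_2)\,d\mu^-(s)>0 ,
\]
which is finite by Cauchy--Schwarz. The denominator is then $a^2+b^2+2ab\kappa$.

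Take $a=1$, $b=t>0$. The quotient is $(\lambda_1+t^2\lambda_2)/(1+t^2+2t\kappa)$. Its derivative in $t$ at $t=0$ equals $-2\kappa\lambda_1<0$ (here $\lambda_1>0$ by Proposition~\ref{lambda1}). So for small $t$ the quotient is strictly below $\lambda_1=\min\{\lambda_1,\lambda_2\}$, and \eqref{0owejdmcwedf:w2ed3rfevrf019kdcX3} follows. This also indicates that first eigenfunctions want opposite signs on the two components, consistent with Theorem~\ref{propcambiosegno}.

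For \eqref{0owejdmcwedf:w2ed3rfevrf9}, suppose $w=\alpha u_1+\beta u_2$ with $\alpha,\beta\ge0$, not both zero, were an eigenfunction for $\lambda:=\lambda(\Omega_1\cup\Omega_2,\mu^-)$. There are two cases.
\begin{itemize}
\item If, say, $\beta=0$, then $w=\alpha u_1$ is supported in $\Omega_1$. Its Rayleigh quotient on the union equals $\lambda_1>\lambda$, so it cannot be a minimizer. It is still conceivable that $w$ is a non-minimizing critical point, so I would also test the weak equation. Choose a nonnegative test function $\varphi\in C_c^\infty(\Omega_2)$. Then $\int\nabla w\cdot\nabla\varphi=0$, while the right-hand side is
\[
\lambda\int\mathcal E_s(w,\varphi)\,d\mu^-=-2\lambda\alpha\int\!\!\iint\frac{u_1(x)\varphi(y)}{|x-y|^{N+2s}}\,d\mu^-<0 ,
\]
a contradiction.
\item If $\alpha,\beta>0$, then $w\ge0$. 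The numerator of its quotient is $\alpha^2\lambda_1+\beta^2\lambda_2$ and the denominator is $\alpha^2+\beta^2-2\alpha\beta\kappa<\alpha^2+\beta^2$. Hence the quotient exceeds $\min\{\lambda_1,\lambda_2\}>\lambda$, contradicting that eigenfunctions of the first eigenvalue attain the minimum.
\end{itemize}

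The main obstacle is justifying the identification ``eigenfunction for $\lambda$ $\Rightarrow$ Rayleigh quotient equals $\lambda$''. I would handle it by testing the weak formulation with $w$ itself. That step needs the weak formulation to hold in $H^1_0$, with the bilinear form $\int\mathcal E_s\,d\mu^-$ well defined on it; this follows because $[u]_s\lesssim\|u\|_{H^1}$ uniformly in $s$ with the normalization \eqref{defcNs}.
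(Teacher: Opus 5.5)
Your proof is correct. The strict inequality is proved exactly as in the paper: you test with $u_{\Omega_1}-t\,u_{\Omega_2}$ and use the strict negativity of the cross term $\int_{[0,1)}\langle u_{\Omega_1},u_{\Omega_2}\rangle_s\,d\mu^-(s)$. The only difference is that you restrict to the component with the smaller eigenvalue, while the paper swaps indices.

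For the second claim your route is different. The paper argues locally in $\Omega_1$. It subtracts $c_1$ times the equation of $u_{\Omega_1}$ from the equation of $u_\star=c_1u_{\Omega_1}+c_2u_{\Omega_2}$ and tests with $u_{\Omega_1}$. This gives $0=c_1(\lambda(\Omega_1)-\lambda(\Omega))\int_{[0,1)}[u_{\Omega_1}]_s^2\,d\mu^-(s)-c_2\lambda(\Omega)\int_{[0,1)}\langle u_{\Omega_1},u_{\Omega_2}\rangle_s\,d\mu^-(s)$. Both terms are nonnegative, so $c_1=c_2=0$, with no case distinction and using only the equation on one component.

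You argue globally instead. The energy identity (testing with $w$ itself, legitimate by density and the uniform bound $[u]_s\le C[u]_1$ of Lemma~\ref{nons}) shows that every eigenfunction for $\lambda=\lambda(\Omega_1\cup\Omega_2,\mu^-)$ has Rayleigh quotient exactly $\lambda$. For a same-sign combination, the nonlocal interaction only lowers the denominator, so the quotient is at least $\min\{\lambda_1,\lambda_2\}>\lambda$. This is clean and transparent.

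It also makes your case split unnecessary. When $\beta=0$ the quotient is $\lambda_1\neq\lambda$. The worry about a ``non-minimizing critical point'' disappears, because the energy identity fixes the quotient of every eigenfunction for $\lambda$, whether minimizing or not. Your extra test with $\varphi\in C^\infty_c(\Omega_2)$ is correct; it even shows that a nontrivial nonnegative function supported in one component solves the equation on the union for no $\lambda\neq0$. It is simply not needed.
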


We stress that hypothesis~\eqref{0owejdmcwedf:w2ed3rfevrf} is fulfilled,
for example, in the setting of Theorem~\ref{simple>0}.

A related question deals with the simplicity of the first eigenvalue.
Quite surprisingly, for disconnected domains this eigenvalue may or may not be simple. This feature is displayed in the following results:

\begin{theorem}\label{lambdasnonsemplice}
Let $\Omega:=B_\frac{1}{2}(e_1)\cup B_\frac{1}{2}(-e_1)$. Denote by $\lambda_s$ the first eigenvalue of problem
\[
\begin{cases}
-\Delta u=\lambda (-\Delta)^s u &\mbox{in }\Omega,
\\
u=0  &\mbox{in }\R^N\setminus \Omega.
\end{cases}
\]

Then, $\lambda_s$ is not simple provided that $s$ is small enough.
\end{theorem}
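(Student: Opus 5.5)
The plan is a perturbative argument as $s\searrow0$, combined with the reflection symmetry $R(x_1,x'):=(-x_1,x')$, which exchanges $B_{1/2}(e_1)$ and $B_{1/2}(-e_1)$. I should say up front that the direct first-order computation points the other way (toward simplicity), so the plan below hinges on one step that is unresolved.

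\textbf{Step 1 (symmetry reduction).} The Rayleigh quotient $\int_\Omega|\nabla u|^2\,dx\big/[u]_s^2$ is invariant under $R$. Hence $u\circ R$ is a first eigenfunction whenever $u$ is. If $\lambda_s$ were simple, every first eigenfunction $u$ would satisfy $u\circ R=\pm u$, that is, it would be even or odd in $x_1$. It therefore suffices to produce, for $s$ small, a first eigenfunction that is neither even nor odd. An equivalent route is to exhibit one even and one odd first eigenfunction.

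\textbf{Step 2 (the limit $s\searrow0$).} I would mimic the proof of Theorem~\ref{main1} with $\mu_n^-=\delta_{s_n}$, $s_n\searrow0$. This gives $\lambda_{s_n}\to\lambda_0$, the first Dirichlet eigenvalue of $B_{1/2}$. Normalized eigenfunctions converge, up to subsequences, in $L^2$ and weakly in $H^1$ to an element of the two-dimensional space spanned by $\varphi(x-e_1)$ and $\varphi(x+e_1)$. Here $\varphi>0$ is the first Dirichlet eigenfunction of $B_{1/2}$ extended by zero. This is the degenerate limit: both combinations $\varphi(x-e_1)\pm\varphi(x+e_1)$ are limiting eigenfunctions.

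\textbf{Step 3 (Lyapunov--Schmidt reduction).} I would write $u=\alpha\varphi(\cdot-e_1)+\beta\varphi(\cdot+e_1)+w$ with $w$ orthogonal to the limiting eigenspace, and reduce to a $2\times2$ problem for $(\alpha,\beta)$. By symmetry its matrix has equal diagonal entries, so the two even and odd candidates are its eigenvectors. Non-simplicity for small $s$ amounts to showing that the two reduced eigenvalues coincide for all small $s$, not merely in the limit.

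\textbf{Main obstacle.} The difficulty is the off-diagonal (interaction) entry. Expanding $[u]_s^2=\|u\|_{L^2}^2+s\,\mathcal E(u)+o(s)$, the cross term between the two balls comes from the kernel $c_{N,s}|x-y|^{-N-2s}$. Since $c_{N,s}\sim s$, it contributes at order $s$, namely
$$-2\alpha\beta\,\frac{c_{N,s}}{s}\int\!\!\int\frac{\varphi(x-e_1)\varphi(y+e_1)}{|x-y|^{N}}\,dx\,dy.$$
This is nonzero, which would split the eigenvalue and favour the odd combination, i.e. point toward simplicity. That is exactly what I must overcome. I would first recheck the normalization constants in~\eqref{AMMU0}--\eqref{defcNs} and how $(-\Delta)^0=\mathrm{Id}$ enters the expansion, to see whether the interaction is actually absorbed at order $s$. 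If it is not, I would look for a second mechanism producing two independent minimizers, for instance using the rotations about the $e_1$-axis in dimension $N\ge2$. I expect this step to decide whether the argument closes. If the off-diagonal term really is nonzero, the plan above cannot reach the stated conclusion.
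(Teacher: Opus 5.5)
\textbf{The obstacle you isolate is real and cannot be overcome: it disproves the statement.} Rechecking the normalization changes nothing. We have $c_{N,s}=\frac{\Gamma(N/2)}{2\pi^{N/2}}\,s+O(s^2)>0$, and the identity part $(-\Delta)^0=\mathrm{Id}$ contributes nothing to the cross term, because the balls $\Omega_1:=B_{1/2}(e_1)$ and $\Omega_2:=B_{1/2}(-e_1)$ are disjoint. For $f,g$ supported in $\Omega_1$ and $\Omega_2$ respectively, $\langle f,g\rangle_s=-2c_{N,s}\iint f(x)g(y)|x-y|^{-N-2s}\,dx\,dy$ with $1\le|x-y|\le 3$. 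So the interaction has exact order $s$ and a definite sign.

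You do not need a quantitative Lyapunov--Schmidt reduction to turn this into a proof of the \emph{opposite} conclusion. Take $s_n\searrow0$ and first eigenfunctions $u_n$ with $[u_n]_{s_n}=1$. By Theorem~\ref{main1} (via Lemma~\ref{lemma2}), up to a subsequence $u_n\to au_0+bv_0$ in $L^2$ with $a^2+b^2=1$. The flipped function $\widetilde u_n:=u_n\chi_{\Omega_1}-u_n\chi_{\Omega_2}$ has the same Dirichlet energy and
\[
[\widetilde u_n]_{s_n}^2=1+8c_{N,s_n}\iint_{\Omega_1\times\Omega_2}\frac{u_n(x)\,u_n(y)}{|x-y|^{N+2s_n}}\,dx\,dy ,
\]
where the double integral tends to $ab\iint u_0(x)v_0(y)|x-y|^{-N}\,dx\,dy$. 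Hence $ab>0$ would give a strictly smaller Rayleigh quotient for large $n$, contradicting minimality; this is the mechanism of Theorem~\ref{propcambiosegno}. The eigenspace is invariant under $x\mapsto-x$, so the even part of a first eigenfunction is again a first eigenfunction or zero. An even one would have $a=b\neq0$, so for $s$ small every first eigenfunction is odd. Finally, suppose two first eigenfunctions were $\langle\cdot,\cdot\rangle_{s_n}$-orthonormal. Lemma~\ref{lemma2}, applied to their sum and difference, would make them converge to two $L^2$-orthonormal odd elements of $\mathrm{span}\{u_0,v_0\}$. That is impossible, because the odd part of this span is one-dimensional. Thus $\lambda_s$ is \emph{simple} for small $s$, with an odd eigenfunction close to $(u_0-v_0)/\sqrt2$. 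Your fallback via rotations about the $e_1$-axis cannot produce a second eigenfunction either, since this unique eigenfunction is invariant under them.

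This also locates where the paper's argument breaks. It asserts that, by Theorem~\ref{main1}, a subsequence of $u_{s_n}$ converges to either $u_0$ or $v_0$, and then reaches a contradiction by comparing $u_{s_n}$ with $u_{s_n}(-x)$. But Theorem~\ref{main1} only provides a limit somewhere in the two-dimensional eigenspace $\mathrm{span}\{u_0,v_0\}$ (Proposition~\ref{PROPD1}(v)). By the argument above, the limit is actually $\pm(u_0-v_0)/\sqrt2$, and then $u_{s_n}(-x)=-u_{s_n}(x)$, which is fully compatible with simplicity, so no contradiction arises. Your first-order computation is correct, and so is your reading of it: the degenerate limit eigenvalue splits at order $s$ in favour of the odd combination. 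The gap in your proposal is not a missing idea; the claimed conclusion simply does not hold for this configuration.
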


\begin{theorem}\label{lambdassemplice}
Let $\Omega:=B_\frac{1}{2}(e_1)\cup B_\frac{1}{4}(-\frac{1}{2}e_1)$. Denote by $\lambda_s$ the first eigenvalue of problem
\[
\begin{cases}
-\Delta u=\lambda (-\Delta)^s u &\mbox{in }\Omega,
\\
u=0  &\mbox{in }\R^N\setminus \Omega.
\end{cases}
\]

Then, $\lambda_s$ is simple provided that $s$ is small enough.
\end{theorem}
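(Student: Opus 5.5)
Writing $\Omega_1:=B_{1/4}(-\tfrac12 e_1)$ and $\Omega_2:=B_{1/2}(e_1)$, the plan is to compare the problem with its limit as $s\searrow0$, where $(-\Delta)^s u\to u$. In that limit we recover the classical Dirichlet eigenvalue problem $-\Delta u=\lambda u$ in $\Omega$. By Proposition~\ref{PROPD1}, its first eigenvalue is $\lambda_0=\min\{\lambda_1(\Omega_1),\lambda_1(\Omega_2)\}$. Scaling gives $\lambda_1(B_{1/4})=4\lambda_1(B_{1/2})$, so the minimum is attained \emph{only} on the larger ball $\Omega_2$. Hence $\lambda_0$ is simple, with eigenfunction $u_0$ equal to the first Dirichlet eigenfunction of $\Omega_2$ extended by zero. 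Moreover, the second eigenvalue $\lambda_0^{(2)}$ of $\Omega$ satisfies $\lambda_0^{(2)}=\min\{\lambda_1(\Omega_1),\lambda_2(\Omega_2)\}>\lambda_0$, so there is a strict spectral gap in the limit problem.

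The key step is to argue by contradiction. Suppose there are $s_n\searrow0$ with $\lambda_{s_n}$ not simple. Then we can pick two eigenfunctions $u_n,v_n$ for $\lambda_{s_n}$ with $[u_n]_{s_n}=[v_n]_{s_n}=1$ that are orthogonal in the $H^{s_n}$ inner product, i.e.\ $\langle u_n,v_n\rangle_{s_n}=0$. This is possible because the eigenspace is a finite-dimensional subspace and the problem is symmetric in that form. We then apply Theorem~\ref{main1} with $\mu^+=\delta_1$ and $\mu^-_n=\delta_{s_n}$. This yields $\lambda_{s_n}\to\lambda_0$, a bound on $\|\nabla u_n\|_{L^2}$, and, up to subsequences, $u_n\to u$ and $v_n\to v$ strongly in $L^2(\Omega)$, where $u,v$ are eigenfunctions for $\lambda_0$. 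Strictly speaking, Theorem~\ref{main1} is stated for a given first eigenfunction, but its proof, based only on the normalization \eqref{lambdaen1} and the variational identity \eqref{lambdaen2}, applies to any eigenfunction, and in particular to both $u_n$ and $v_n$.

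The obstacle is passing the normalization and the orthogonality to the limit. For this I need $[w]_{s}^2\to\|w\|_{L^2}^2$ uniformly on sets bounded in $H^1_0(\Omega)$ as $s\to0$. Via Fourier, $[w]_s^2=\int(2\pi|\xi|)^{2s}|\hat w|^2\,d\xi$. On the region $|\xi|\le R$ the factor $(2\pi|\xi|)^{2s}$ tends to $1$ uniformly for $|\xi|\ge\delta$, and the contribution of $|\xi|<\delta$ is small because $\|\hat w\|_{L^\infty}\le\|w\|_{L^1}\le C\|w\|_{L^2}$ ($\Omega$ is bounded). The high frequencies $|\xi|>R$ are controlled by $R^{2s-2}\|\nabla w\|^2$. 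Applying this to $u_n$, $v_n$ and, by polarization, to $u_n\pm v_n$, we obtain $\|u\|_{L^2}=\|v\|_{L^2}=1$ and $\int_\Omega uv\,dx=0$. Thus $u,v$ are two linearly independent eigenfunctions for $\lambda_0$, which contradicts its simplicity. Hence $\lambda_s$ is simple for all $s$ small.
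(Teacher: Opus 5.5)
Your proof is correct. It follows the same skeleton as the paper's proof: argue by contradiction along $s_n\searrow0$, apply Theorem~\ref{main1} with $\mu^+=\delta_1$ and $\mu^-_n=\delta_{s_n}$, and contradict the simplicity of the limit eigenvalue $\lambda_0$. That simplicity holds because $\lambda_1(B_{1/4})=4\lambda_1(B_{1/2})$ and Proposition~\ref{PROPD1}(iv).

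The genuine difference is in the key step. The paper takes two arbitrary distinct eigenfunctions $u_n,v_n$ with $[u_n]_{s_n}=[v_n]_{s_n}=1$, arranges that both converge to $u_0$, and sets $w_n=u_n-v_n$. It then claims that $w_n\to0$ a.e. contradicts $w_n/[w_n]_{s_n}\to u_0$. These two facts are perfectly compatible when $[w_n]_{s_n}\to0$, and nothing in that argument stops the two normalized eigenfunctions from being arbitrarily close.

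Your choice of $u_n,v_n$ orthogonal in $\langle\cdot,\cdot\rangle_{s_n}$ is exactly what rules this out, since it forces $[u_n-v_n]_{s_n}^2=2$. Passing both the normalization and the orthogonality to the limit then gives two $L^2$-orthonormal eigenfunctions for $\lambda_0$, which is a real contradiction. So your version is not merely an alternative: it closes a gap in the written argument.

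Your uniform estimate $[w]_s^2-\|w\|_{L^2}^2\to0$ on $H^1_0$-bounded sets is essentially Lemma~\ref{lemma2}. You could instead apply that lemma to $u_n\pm v_n$, which converge weakly in $H^1_0(\Omega)$ to $u\pm v$, and then polarize. Your low-frequency bound $\|\widehat w\|_{L^\infty}\le\|w\|_{L^1}\le|\Omega|^{1/2}\|w\|_{L^2}$ handles $|\xi|<\delta$ uniformly, more cleanly than the dominating function used in that lemma.

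Two minor remarks:
\begin{itemize}
\item The statement about the spectral gap $\lambda_0^{(2)}>\lambda_0$ is never used.
\item The caveat about Theorem~\ref{main1} is unnecessary. That theorem is already stated for an arbitrary eigenfunction of $\lambda_{\varepsilon_n}$, and any eigenfunction attains the Rayleigh minimum, as you see by testing the equation against the eigenfunction itself.
\end{itemize}
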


Regarding the proof of Theorem \ref{simple>0}, 
in the course of its proof
we need
an auxiliary regularity result, which may have an independent interest.
To state it, it is convenient to introduce a positive number~$\overline\mu$ such that
$$\overline\mu\ge \mu^+([0,1])+\mu^-([0,1]).$$
In this setting, we have:

\begin{theorem}\label{thmregolarità}
Let~$\widetilde s\in[0,1/2)$.
Assume that $f\in L^p(\Omega)$ with $p\in(1,+\infty)$. 

Let $u\in H^1_0(\Omega)$ be a weak solution\footnote{The notion of weak solution in this paper is understood in the sense of testing the equation against smooth functions compactly supported in~$\Omega$.} of the problem
\begin{equation}\label{problemaregolarità}
\begin{cases}
-\Delta u +\displaystyle\int_{[0,\widetilde s]}(-\Delta)^s u\,d\mu(s)
=f(x) & \mbox{in } \Omega,
\\
u=0  & \mbox{in } \R^N\setminus \Omega.
\end{cases}
\end{equation}

Then, $u\in W^{2,p}(\Omega)$ and there exists a constant $C=C(N,p,\Omega,\widetilde s,\overline\mu)$ such that:
\begin{equation}\label{inequalityregolarità}
{\mbox{when either $N=1$ or $p\in[2,+\infty)$,}}\qquad
\|u\|_{W^{2,p}(\Omega)}\leq 
C\,\Big( \|f\|_{L^p(\Omega)}+\|u\|_{L^2(\Omega)}\Big)
\end{equation}
and
\begin{equation}\label{inequalityregolaritàlow}
{\mbox{when $p\in(1,2)$,}}\qquad
\|u\|_{W^{2,p}(\Omega)}\leq 
C\,\Big( \|f\|_{L^p(\Omega)}+\|u\|_{L^p(\Omega)}\Big).
\end{equation}
\end{theorem}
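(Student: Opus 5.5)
We regard the nonlocal part as a perturbation of the Laplacian of order at most $2\widetilde s<1$ and argue by bootstrap. Write \eqref{problemaregolarità} as
\[
-\Delta u = f - g,\qquad g:=\int_{[0,\widetilde s]}(-\Delta)^s u\,d\mu(s)\quad\mbox{in }\Omega,
\]
with $u\in H^1_0(\Omega)$. Once $g\in L^p(\Omega)$ with a good bound, the classical Calder\'on--Zygmund $L^p$ theory for the Dirichlet Laplacian gives $u\in W^{2,p}(\Omega)$ and
\[
\|u\|_{W^{2,p}(\Omega)}\le C\big(\|f\|_{L^p}+\|g\|_{L^p}+\|u\|_{L^p}\big).
\]
This needs $\partial\Omega$ to be regular enough, say $C^{1,1}$. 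If only $C^1$ is available, we use the $W^{1,p}$ theory on $C^1$ domains and then interior/local estimates. So the whole problem reduces to estimating $g$.

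The key lemma bounds $(-\Delta)^s u$ for $u$ extended by zero, uniformly in $s\in[0,\widetilde s]$, by a lower-order norm of $u$. For $s\le\widetilde s<1/2$, the zero extension of $u\in W^{1,q}_0(\Omega)$ belongs to $W^{1,q}(\R^N)$. Hence $(-\Delta)^s u\in L^q(\R^N)$ with
\[
\|(-\Delta)^s u\|_{L^q(\R^N)}\le C\,\|u\|_{W^{2s,q}(\R^N)}\le C\,\|u\|_{W^{1,q}(\R^N)}.
\]
The constant is uniform in $s\in[0,\widetilde s]$ because $c_{N,s}$ is bounded there. Concretely, we split the integral in \eqref{AMMU0} into $|y|<1$, controlled by first differences and $\|\nabla u\|_{L^q}$ via $|y|^{1-N-2s}$, which is integrable since $2s<1$, and $|y|\ge1$, controlled by $\|u\|_{L^q}$. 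Integrating against $|\mu|\le\overline\mu$ then gives
\[
\|g\|_{L^q(\Omega)}\le C\,\overline\mu\,\|u\|_{W^{1,q}(\Omega)}.
\]
In fact only $W^{2\widetilde s+\delta,q}$ regularity is needed, and this is what makes interpolation work.

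Now the bootstrap. Start from $u\in H^1_0$, which gives $g\in L^2$. If $p\le2$, this yields $u\in W^{2,p}$ directly. Otherwise $u\in W^{2,2}$; then by Sobolev embedding $\nabla u\in L^{q_1}$ with $q_1>2$, so $g\in L^{q_1}$ and $u\in W^{2,\min(p,q_1)}$. Iterating finitely many times reaches $p$, and when $N=1$ one step suffices. For the estimate, we absorb the lower-order term by interpolation:
\[
\|u\|_{W^{1,p}}\le \eta\|u\|_{W^{2,p}}+C_\eta\|u\|_{L^r}.
\]
Here $r=2$ when $p\ge2$ (or $N=1$), using bounded domains and chaining through the intermediate exponents, and $r=p$ when $p<2$. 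Choosing $\eta$ small gives \eqref{inequalityregolarità} and \eqref{inequalityregolaritàlow}. There is a case $p<2$ where the $H^1$ starting point gives $g\in L^2\subset L^p$ immediately; there we interpolate $W^{1,p}$ between $W^{2,p}$ and $L^p$ to obtain the stated $L^p$ right-hand side.

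The main obstacle is the justification of the a priori step. The absorption argument presupposes $\|u\|_{W^{2,p}}<\infty$, so it must follow the qualitative bootstrap; that is why the bootstrap comes first. We also have to verify carefully that weak solutions in the sense of compactly supported test functions coincide with the $H^1_0$ variational solutions of $-\Delta u=f-g$. Finally, the boundary behaviour of the zero extension must not spoil the $W^{2s,q}$ bound; this is exactly why $\widetilde s<1/2$ is needed.
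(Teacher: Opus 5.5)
Your proposal is essentially the paper's argument. Your key lemma is Lemma~\ref{misurainLp}: an $L^q$ bound on $\int_{[0,\widetilde s]}(-\Delta)^s u\,d\mu(s)$ by $\|u\|_{W^{1,q}}$, uniform because $c_{N,s}/(1-2s)$ and $c_{N,s}/s$ stay bounded for $s\le\widetilde s<1/2$. It is followed, as in the paper, by classical Calder\'on--Zygmund theory for $-\Delta u=f-g$, a Sobolev bootstrap, and absorption via interpolation.

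Two differences are worth recording, both in your favour.

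First, you treat the zero extension as an element of $W^{1,q}(\R^N)$, so you need only the near/far splitting. The paper also isolates the points $y\in B_1(x)\setminus\Omega$ and handles them with a Hardy-type bound, which your argument makes unnecessary. Your intermediate step through $\|u\|_{W^{2s,q}}$ is not correct for $q>2$, but the concrete splitting never uses it.

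Second, the paper tracks constants along the chain $p_0<2\le p_1<\dots<p_{L+1}=p$, built backwards from $p$. This gives $\|u\|_{W^{2,p}}\le C(\|f\|_{L^p}+\|u\|_{H^1})$, and the paper then interpolates $H^1$ between $H^2$ and $L^2$. You instead run a purely qualitative bootstrap and absorb once with $\|u\|_{W^{1,p}}\le\eta\|u\|_{W^{2,p}}+C_\eta\|u\|_{L^2}$. This inequality is cleanest via Ehrling's lemma, using compactness of $W^{2,p}(\Omega)\hookrightarrow W^{1,p}(\Omega)$ and continuity of $W^{1,p}(\Omega)\hookrightarrow L^2(\Omega)$ for $p\ge2$, rather than by ``chaining exponents''. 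Your route is shorter and covers all relations between $N$ and $p$ at once. The paper, by contrast, disposes of the case $p\ge N$ with a shortcut, $\|u\|_{W^{1,p}}\le\widehat C\|u\|_{L^2}$, that is not valid as written.

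Do drop the fallback for merely $C^1$ boundaries. $W^{1,p}$ theory plus interior estimates yields only $W^{2,p}_{\rm loc}$, and global $W^{2,p}$ estimates for the Dirichlet Laplacian are not available on general $C^1$ domains. Assume $C^{1,1}$ instead; the paper tacitly needs the same extra boundary regularity when it invokes the classical global $W^{2,p}$ estimate.
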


We will give in this paper a self-contained proof of Theorem~\ref{thmregolarità}
(see e.g.~\cite[Theorem~3.1.19]{MR1911531}
for related results in the class of strong solutions).\medskip

The rest of the paper is organized as follows. After discussing
some preliminary convergence properties in Section~\ref{oajdcfwe9rtm92-34v50nVGm504r02390},
we address the regularity theory needed in this paper in Section~\ref{oajdcfwe9rtm92-34v50nVGm504r023902},
where we prove Theorem~\ref{thmregolarità}.

Then, in Section~\ref{0qspidkfewp0rthn9nf5ac01sklO}
we discuss the perturbation theory of nonlocal eigenvalue problems,
establishing Theorems~\ref{main1} and \ref{simple>0}.

The case of disconnected domains is dealt with in Section~\ref{sezionedisconnessi},
which contains the proofs of
Theorems~\ref{propcambiosegno}, \ref{ATTEHAMS},
\ref{lambdasnonsemplice}
and~\ref{lambdassemplice}.

The paper ends with an appendix collecting auxiliary observations
of technical flavor, as well as the proof of Proposition~\ref{lambda1}.

\section{Preliminaries: functional spaces and basic convergence properties}\label{oajdcfwe9rtm92-34v50nVGm504r02390}
In this section we introduce our functional analytical setting and we gather some preliminary observations.

To this end, for~$s\in[0,1]$, we let
\begin{equation}\label{seminormgagliardo}
[u]_s:=
\begin{cases}
\|u\|_{L^2(\R^N)}  &\mbox{ if } s=0,
\\ \\
\displaystyle\left(c_{N,s}\iint_{\R^{2N}}\frac{|u(x)-u(y)|^2}{|x-y|^{N+2s}}\,dx\,dy \right)^{1/2} &\mbox{ if } s\in(0,1),
\\ \\
\|\nabla u\|_{L^2(\R^N)}  &\mbox{ if } s=1.
\end{cases}
\end{equation}
We define 
\begin{equation}\label{normadefinizione}
\|u\|_{X^+}:=\left(\;\int_{[0,1]}[u]^2_s\,d\mu^+(s)\right)^{\frac12}.
\end{equation}
Then, for any open and bounded set~$\Omega\subset\R^N$ with Lipschitz boundary, we define the space 
\begin{equation}\label{X+Omegadefn}
X^+(\Omega) \mbox{ as the completion of $C^\infty_c(\Omega)$ with respect to $\|\cdot\|_{X^+}$},
\end{equation}
and refer the interested reader to \cite{DPLSVlog} for more details on this functional space.

We remark that $\|\cdot\|_{X^+}$ is a norm in the space $X^+(\Omega)$ and
\begin{equation}\label{X+Hilbert}
\mbox{$X^+(\Omega)$ is a Hilbert space with respect to the norm $\|u\|_{X^+}$},
\end{equation}
which is endowed with the scalar product defined, for any~$u$, $v\in X^+(\Omega)$, as
\begin{equation}\label{scalarepiu}
\langle u, v\rangle_\pm :=  \int_{[0,1]}\langle u, v\rangle_s\,d\mu^\pm(s),
\end{equation}
where
$$\langle u, v\rangle_s:=
\begin{cases}
\displaystyle\int_\Omega u(x)\,v(x)\,dx  &\mbox{ if } s=0,
\\ \\
c_{N,s}\displaystyle\iint_{\R^{2N}}\frac{(u(x)-u(y))(v(x)-v(y))}{|x-y|^{N+2s}}\,dx\,dy &\mbox{ if } s\in(0,1),
\\ \\
\displaystyle\int_\Omega \nabla u(x)\cdot\nabla v(x)\,dx  &\mbox{ if } s=1.
\end{cases}
$$

In our analysis, we will make use of the following result of Sobolev type, which states that higher exponents in fractional norms control the lower exponents with uniform constants (see~\cite[Lemma~2.1]{MR4736013}).
\begin{lemma}\label{nons} 
Let~$0\le s_1 \le s_2 \le1$. 

Then, for any measurable function~$u:\R^N\to\R$ with~$u=0$ a.e. in~$\R^N\setminus\Omega$ we have that
\begin{equation}\label{spp}
[u]_{s_1}\le C \, [u]_{s_2},
\end{equation}
for a suitable positive constant~$C=C(N,\Omega)$.
\end{lemma}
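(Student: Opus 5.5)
The plan is to work on the Fourier side, where all seminorms become weighted integrals of $|\hat u(\xi)|^2$. For $u$ vanishing a.e. outside $\Omega$ with $[u]_{s_2}<\infty$, Plancherel together with the normalization of $c_{N,s}$ in \eqref{defcNs} gives
\[
[u]_s^2=\int_{\R^N}(2\pi|\xi|)^{2s}|\hat u(\xi)|^2\,d\xi,\qquad s\in[0,1].
\]
This identity is exact at the endpoints $s=0$ and $s=1$, and for $s\in(0,1)$ it holds up to a multiplicative constant bounded uniformly in $s$; in the convention of \eqref{AMMU0}--\eqref{defcNs} it is in fact an equality. Granting the identity, the comparison \eqref{spp} reduces to an estimate on Fourier multipliers.

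First, fix $R\ge1$ and split frequency space at $2\pi|\xi|=R$. On the high-frequency region $2\pi|\xi|\ge R$ we have $(2\pi|\xi|)^{2s_1}\le (2\pi|\xi|)^{2s_2}$ because $R\ge 1$. On the low-frequency region $2\pi|\xi|<R$ we have $(2\pi|\xi|)^{2s_1}\le R^{2s_1}\le R^2$. Combining the two regions, with constants independent of $s_1,s_2$, gives
\[
[u]_{s_1}^2\le [u]_{s_2}^2+R^2\|u\|_{L^2}^2 .
\]
Taking $R=1$ this becomes $[u]_{s_1}^2\le [u]_{s_2}^2+\|u\|_{L^2(\R^N)}^2$.

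The remaining step is a Poincaré-type bound $\|u\|_{L^2}\le C[u]_{s_2}$ that is uniform in $s_2\in[0,1]$. This is the main obstacle. As $s_2\to0$ it is trivial, since $[u]_0=\|u\|_{L^2}$. For $s_2$ near $1$ the issue is that the Gagliardo constant degenerates, but the factor $s(1-s)$ in $c_{N,s}$ exactly compensates for this. I would prove the bound on the Fourier side as follows.

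Since $\Omega\subset B_\rho$ is bounded, $|\hat u(\xi)|\le |\Omega|^{1/2}\|u\|_{L^2}$ pointwise. Hence, for a small $r>0$,
\[
\int_{2\pi|\xi|<r}|\hat u|^2\,d\xi\le |\Omega|\,|B_{r/2\pi}|\,\|u\|_{L^2}^2\le \tfrac12\|u\|_{L^2}^2,
\]
once $r=r(N,\Omega)\le1$ is chosen small enough. On the complement $2\pi|\xi|\ge r$ we use $(2\pi|\xi|)^{2s_2}\ge r^{2s_2}\ge r^2$. Therefore
\[
\tfrac12\|u\|_{L^2}^2\le \int_{2\pi|\xi|\ge r}|\hat u|^2\,d\xi\le r^{-2}[u]_{s_2}^2 ,
\]
so $\|u\|_{L^2}\le \sqrt2\, r^{-1}[u]_{s_2}$, with $r$ depending only on $N$ and $\Omega$. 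Inserting this into the previous step yields \eqref{spp} with $C^2=1+2r^{-2}$.

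Finally, the case $[u]_{s_2}=\infty$ is trivial. If $[u]_{s_2}<\infty$, then $u\in L^2$: for $s_2=0$ this is immediate, and for $s_2>0$ it follows by applying the argument above first to truncations, or directly from the finiteness of the seminorm for functions supported in a bounded set. This justifies the use of Plancherel.
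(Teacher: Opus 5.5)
Your proof is correct and complete. The paper does not prove Lemma~\ref{nons} itself; it imports the statement from \cite[Lemma~2.1]{MR4736013}. So the only comparison is between that citation and your self-contained argument.

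Your argument has three ingredients:
\begin{itemize}
\item the Fourier representation $[u]_s^2=\int_{\R^N}(2\pi|\xi|)^{2s}|\widehat u(\xi)|^2\,d\xi$;
\item the splitting of frequency space at $2\pi|\xi|=1$;
\item a Poincar\'e bound that is uniform in $s_2$, obtained from $\|\widehat u\|_{L^\infty}\le|\Omega|^{1/2}\|u\|_{L^2}$.
\end{itemize}
This makes the uniformity in $s_1,s_2$ transparent. It also yields the explicit constant $C^2=1+2r^{-2}$, which depends on $\Omega$ only through $|\Omega|$, so no regularity of $\partial\Omega$ and no connectedness are used. Your hedge that the Fourier formula holds ``up to a multiplicative constant'' for $s\in(0,1)$ is unnecessary. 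With the normalization \eqref{defcNs} it is an exact identity, and it is the same formula the paper itself uses in the proof of Lemma~\ref{lemma2}.

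The one place to state more precisely is the a priori integrability in your last paragraph.
\begin{itemize}
\item For $s_2\in(0,1)$, the ``direct'' route is the tail estimate $[u]_{s_2}^2\ge 2c_{N,s_2}\int_\Omega|u(x)|^2\int_{\R^N\setminus\Omega}|x-y|^{-N-2s_2}\,dy\,dx\ge c\,\|u\|_{L^2}^2$ with some $c>0$. The constant $c$ depends on $s_2$, which is harmless because the bound is only used qualitatively, to justify Plancherel.
\item For $s_2=1$ the seminorm is local, so no tail term is available. There you need either the classical Poincar\'e inequality or your truncation argument. The truncation argument works: apply your Fourier bound to $T_k u$, using $|\nabla T_k u|\le|\nabla u|$, and then let $k\to\infty$ by monotone convergence.
\end{itemize}
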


We now recall a particular case of the result stated in~\cite[Proposition~2.4]{MR4736013}, which provides some useful embeddings for the space $X^+(\Omega)$ defined in \eqref{X+Omegadefn}.

\begin{lemma}\label{embeddingsXOmega}
The space $X^+(\Omega)$ is continuously embedded in $H^{\overline s}(\Omega)$.

Furthermore, $X^+(\Omega)$ is continuously and compactly embedded in~$L^2(\Omega)$.
\end{lemma}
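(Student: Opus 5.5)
Since $X^+(\Omega)$ is, by definition, the completion of $C^\infty_c(\Omega)$ under $\|\cdot\|_{X^+}$, and the norm is
\[
\|u\|_{X^+}^2=\int_{[0,1]}[u]_s^2\,d\mu^+(s),
\]
the plan is to bound $[u]_{\overline s}$ by $\|u\|_{X^+}$ for every $u\in C^\infty_c(\Omega)$ and then pass to the completion by density. The tool is Lemma~\ref{nons}. For every $s\in[\overline s,1]$ it gives
\[
[u]_{\overline s}\le C\,[u]_s,
\]
with $C=C(N,\Omega)$ independent of $s$. Integrating this over $[\overline s,1]$ against $\mu^+$ yields
\[
\mu^+\big([\overline s,1]\big)\,[u]_{\overline s}^2\le C^2\int_{[\overline s,1]}[u]_s^2\,d\mu^+(s)\le C^2\|u\|_{X^+}^2 .
\]
By \eqref{mu0} we have $\mu^+([\overline s,1])>0$, so we can divide through. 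The same lemma with $s_1=0$ gives
\[
\|u\|_{L^2}=[u]_0\le C[u]_{\overline s}.
\]
Together these control the full $H^{\overline s}$ norm, namely $\|u\|_{L^2}+[u]_{\overline s}$. (When $\overline s=1$ this is simply the $H^1$ norm.)

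The constants depend only on $N$, $\Omega$ and $\mu^+([\overline s,1])$, so the inequality holds uniformly on $C^\infty_c(\Omega)$. Consequently, a Cauchy sequence in $\|\cdot\|_{X^+}$ is also Cauchy in $H^{\overline s}$. This lets the identity map extend to a bounded linear map from $X^+(\Omega)$ into the closure of $C^\infty_c(\Omega)$ in $H^{\overline s}(\mathbb R^N)$; these functions vanish outside $\Omega$, and restricting them to $\Omega$ gives the embedding into $H^{\overline s}(\Omega)$.

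We also need this map to be injective, so that it is a genuine embedding. For this we use the $L^2$ bound $\|u\|_{L^2}\le C'\|u\|_{X^+}$, which is the same estimate as above. It ensures that $X^+$-limits and $L^2$-limits coincide, so elements of $X^+(\Omega)$ can be identified with functions. Injectivity then follows because each $[\cdot]_s$ is lower semicontinuous under $L^2$ convergence (by Fatou's lemma), and hence so is $\|\cdot\|_{X^+}$ (again by Fatou, now in $s$). A sequence converging to $0$ in $L^2$ therefore has $X^+$-limit of norm $0$.

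For compactness, take a bounded sequence in $X^+(\Omega)$. By the first part it is bounded in $H^{\overline s}$, and all its elements are supported in the bounded Lipschitz set $\Omega$. The fractional Rellich–Kondrachov theorem gives compactness of $H^{\overline s}$ functions supported in a fixed bounded set into $L^2$, since $\overline s>0$. Composing this compact map with the continuous embedding from the first part yields the compact embedding of $X^+(\Omega)$ into $L^2(\Omega)$.

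The main obstacle is the uniformity in $s$ of the constant in Lemma~\ref{nons}: without it, integrating the pointwise inequality against $\mu^+$ would fail. Since the lemma provides this uniformity, the rest is routine. The one remaining point of care is the identification of the abstract completion with a function space, handled by the lower semicontinuity argument above.
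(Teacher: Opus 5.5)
Your proof is correct and is essentially the argument behind the paper's citation of \cite[Proposition~2.4]{MR4736013}; the paper gives no proof of its own. Integrating Lemma~\ref{nons} against $\mu^+$ over $[\overline s,1]$ and dividing by $\mu^+([\overline s,1])>0$ is exactly the estimate the paper writes out in the proof of Lemma~\ref{lemma2}, and compactness then comes from the fractional Rellich--Kondrachov theorem.

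One small imprecision: lower semicontinuity of $[\cdot]_1$ under $L^2$ convergence does not follow from Fatou's lemma in physical space, since the gradients need not converge a.e. It does hold, simultaneously for all $s\in[0,1]$, by using the Fourier representation $[u]_s^2=\int_{\R^N}|2\pi\xi|^{2s}|\widehat u(\xi)|^2\,d\xi$ together with a.e. convergence of the Fourier transforms along a subsequence.
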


Now we present some preliminary results.

\begin{lemma}\label{lemma2}
Assume that \eqref{mu0}, \eqref{successionemu-1}, \eqref{successionemu-2} hold. Let $u_n$ be a sequence such that $u_n$ converges weakly in $X^+(\Omega)$ to some $u_*$.

Then,
\begin{equation}\label{limitemu-n2}
\lim_{n\to +\infty}\left|\,\int_{[0, \varepsilon_n]} [u_n]^2_s\, d\mu^-_n(s) - \|u_*\|^2_{L^2(\R^N)} \right| = 0.
\end{equation}
\end{lemma}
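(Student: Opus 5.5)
By Lemma~\ref{embeddingsXOmega}, weak convergence in $X^+(\Omega)$ gives strong convergence $u_n\to u_*$ in $L^2(\Omega)$. Since every function in $X^+(\Omega)$ vanishes outside $\Omega$, this is also strong convergence in $L^2(\R^N)$. The plan is to split
\[
\int_{[0,\varepsilon_n]}[u_n]_s^2\,d\mu_n^-(s)-\|u_*\|^2_{L^2(\R^N)}
=\int_{[0,\varepsilon_n]}\Big([u_n]_s^2-\|u_n\|^2_{L^2(\R^N)}\Big)d\mu_n^-(s)
+\Big(\|u_n\|^2_{L^2(\R^N)}-\|u_*\|^2_{L^2(\R^N)}\Big).
\]
The normalization~\eqref{successionemu-2} lets us write $\|u_n\|^2_{L^2}$ as its integral against $\mu_n^-$. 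The second bracket tends to zero by the strong $L^2$ convergence. So it remains to show that
\[
\sup_{s\in[0,\varepsilon_n]}\Big|[u_n]_s^2-\|u_n\|_{L^2(\R^N)}^2\Big|\to0 .
\]
Then the first term is bounded by this supremum times $\mu_n^-([0,\varepsilon_n])=1$.

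For this I would use the Fourier representation $[u]_s^2=\int_{\R^N}(2\pi|\xi|)^{2s}|\hat u(\xi)|^2\,d\xi$, which holds for all $s\in[0,1]$ thanks to the choice of $c_{N,s}$ in~\eqref{defcNs}. Then
\[
[u_n]_s^2-\|u_n\|_{L^2}^2=\int_{\R^N}\big((2\pi|\xi|)^{2s}-1\big)|\hat u_n(\xi)|^2\,d\xi .
\]
Fix $R>1$ and split frequency space into $\{2\pi|\xi|\le R\}$ and $\{2\pi|\xi|>R\}$.

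On the low frequencies, $|(2\pi|\xi|)^{2s}-1|\le \max\{R^{2s}-1,\,1-\delta^{2s}\}$ away from a small ball $\{2\pi|\xi|<\delta\}$. This bound tends to zero uniformly for $s\le\varepsilon_n$. On the small ball itself the factor is bounded by $1$, and the mass $\int_{|\xi|<\delta}|\hat u_n|^2$ is controlled by $|B_\delta|\,\|u_n\|_{L^1}^2\le|B_\delta|\,|\Omega|\,\|u_n\|^2_{L^2}$. This is small uniformly in $n$, because the sequence is bounded in $L^2$.

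On the high frequencies, for $s\le\varepsilon_n\le\overline s/2$ we have $(2\pi|\xi|)^{2s}\le R^{2s-2\overline s}(2\pi|\xi|)^{2\overline s}$. Hence
\[
\int_{2\pi|\xi|>R}\big|(2\pi|\xi|)^{2s}-1\big|\,|\hat u_n|^2\,d\xi\le 2R^{-\overline s}\,[u_n]^2_{\overline s}.
\]
By Lemma~\ref{embeddingsXOmega}, $[u_n]_{\overline s}\le C\|u_n\|_{X^+}$, and the right-hand side is bounded because weakly convergent sequences are bounded. Choosing first $R$ large and $\delta$ small, and then $n$ large, gives the claim.

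The main obstacle is this uniform-in-$n$ control of the tails. It is handled precisely by the fact that hypothesis~\eqref{mu0} gives a uniform $H^{\overline s}$ bound, which is strictly stronger than the vanishing orders $s\le\varepsilon_n$. A minor technical point is the validity of the Fourier identity at the endpoint $s=0$ and the measurability in $s$, both of which are routine.
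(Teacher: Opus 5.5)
Your proof is correct. It shares the paper's architecture: the split through $\|u_n\|^2_{L^2(\R^N)}$, the Fourier form of $[u_n]_s^2$, and the uniform $H^{\overline s}$ bound from \eqref{mu0} to control high frequencies. Where you differ is the delicate region near $\xi=0$, where $\big|(2\pi|\xi|)^{2s}-1\big|$ does not tend to $0$ uniformly as $s\searrow0$. The paper uses the partial converse of dominated convergence (Brezis, Theorem~4.9) to get $h\in L^2$ with $|\widehat{u_n}|\le h$, and then lets $\int_{B_\delta}h^2\to0$. You instead use $\|\widehat{u_n}\|_{L^\infty}\le\|u_n\|_{L^1}\le|\Omega|^{1/2}\|u_n\|_{L^2}$, which exploits the bounded support. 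Your route is more elementary and slightly sharper. Brezis' theorem gives a dominating function only along a subsequence, a step the paper leaves implicit, so it tacitly needs a subsequence-of-subsequence argument; your bound holds for the whole sequence. It even shows that $\sup_{s\in[0,\varepsilon_n]}\big|[u_n]_s^2-\|u_n\|^2_{L^2}\big|\to0$ for any merely bounded sequence in $X^+(\Omega)$, so weak convergence enters only through $\|u_n\|_{L^2}\to\|u_*\|_{L^2}$. In exchange, the paper needs only two regions, bounding $\big|r^{2s}-1\big|/r^{2\overline s}$ on $\{r\ge2\pi\delta\}$ by a single constant. Your annulus/exterior split with the extra parameter $R$ avoids having to locate that supremum. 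One small point: the estimate $[u_n]_{\overline s}\le C\|u_n\|_{X^+}$ is most directly obtained, as in the paper, by integrating Lemma~\ref{nons} against $\mu^+$ over $[\overline s,1]$. That yields exactly the full-space seminorm your tail bound uses.
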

\begin{proof}
Since~$u_n$ converges weakly in $X^+(\Omega)$, by Lemma \ref{embeddingsXOmega} we deduce that
\begin{equation}\label{convL2}
{\mbox{$u_n\to u_*$ in~$L^2(\R^N)$ as $n\to +\infty $.}}
\end{equation}
From this and the Plancherel Theorem, we get
\begin{equation}\label{convL2trasformata}
{\mbox{$\widehat{u_n}\to \widehat{u}_*$ in $L^2(\R^N)$ as $n\to +\infty $.}}
\end{equation}
Hence,  \cite[Theorem 4.9]{MR2759829} ensures that
\begin{equation}\label{hinL2}
\mbox{there exists $h\in L^2(\Omega)$ such that } |\widehat{u_n}(x)|\le h(x) \ \mbox{ for any $n\in\N$, a.e. in $\Omega$}.
\end{equation}
Moreover, we notice that
\begin{eqnarray*}&&
\left|\,\int_{[0, \varepsilon_n]} [u_n]^2_s\, d\mu^-_n(s) - \|u_*\|^2_{L^2(\R^N)} \right|\\&&\qquad\le \left|\,\int_{[0, \varepsilon_n]} [u_n]^2_s\, d\mu^-_n(s) - \|u_n\|^2_{L^2(\R^N)} \right| + \left|\,\|u_n\|^2_{L^2(\R^N)} - \|u_*\|^2_{L^2(\R^N)} \right|.
\end{eqnarray*}
From this and \eqref{convL2},  we deduce that, to obtain~\eqref{limitemu-n2}, it is enough to show that
\begin{equation}\label{limitemu-n3}
\lim_{n\to +\infty}\left|\,\int_{[0, \varepsilon_n]} [u_n]^2_s\, d\mu^-_n(s) - \|u_n\|^2_{L^2(\R^N)} \right| = 0.
\end{equation}
We check \eqref{limitemu-n3}. To this aim, it is convenient to write the Gagliardo seminorm in terms of the Fourier transform (see e.g.~\cite{MR2944369}). Indeed, for any $n\in\N$,
\[
[u_n]_s=\left(\;\int_{\R^{N}} |2\pi \xi|^{2s}\,|\widehat{u_n}(\xi)|^2\,d\xi\right)^{\frac12}.
\]
We stress that, by the Plancherel Theorem, this is also valid when $s=0$ and $s=1$.

Consequently, we have
\[
\begin{split}
&\left|\,\int_{[0, \varepsilon_n]} [u_n]^2_s\, d\mu^-_n(s) - \|u_n\|^2_{L^2(\R^N)} \right| = \left|\,\int_{[0, \varepsilon_n]} \left(\,\int_{\R^N}|\widehat{u_n}(\xi)|^2 (|2\pi\xi|^{2s}-1) d\xi \right) d\mu^-_n(s)\right|\\
&\qquad\le \int_{\R^N} |\widehat{u_n}(\xi)|^2 \left(\,\int_{[0, \varepsilon_n]} \big| |2\pi\xi|^{2s}-1\big| d\mu^-_n(s)\right) d\xi.
\end{split}
\]
We take $\overline s$ as in \eqref{mu0} and we let $\delta>0$ be small enough.  We observe that the function
\[
|\xi|\in\R^N\setminus B_\delta \mapsto \frac{\big| |2\pi\xi|^{2s}-1\big|}{|2\pi\xi|^{2\overline s}} \mbox{ has a maximum at } \frac{1-(2\pi\delta)^{2s}}{(2\pi\delta)^{2\overline s}}.
\]
From this, \eqref{successionemu-2} and \eqref{hinL2}, we have
\begin{equation}\label{OlJSnwqm50vbX0-708}
\begin{split}
&\left|\,\int_{[0, \varepsilon_n]} [u_n]^2_s u\, d\mu^-_n(s) - \|u_n\|^2_{L^2(\R^N)} \right|\\
&\qquad\le\int_{B_\delta} |\widehat{u_n}(\xi)|^2 \left(\,\int_{[0, \varepsilon_n]} \big| |2\pi\xi|^{2s}-1\big| d\mu^-_n(s)\right) d\xi \\
&\qquad\qquad+ \int_{\R^N\setminus B_\delta} |\widehat{u_n}(\xi)|^2 |2\pi\xi|^{2\overline s}\left(\,\int_{[0, \varepsilon_n]} \frac{\big| |2\pi\xi|^{2s}-1\big|}{|2\pi\xi|^{2\overline s}} d\mu^-_n(s)\right) d\xi\\
&\qquad\le \int_{B_\delta} |\widehat{u_n}(\xi)|^2 d\xi + \left(\,\int_{[0, \varepsilon_n]}\frac{1-(2\pi\delta)^{2s}}{(2\pi\delta)^{2\overline s}} d\mu^-_n(s)\right)\int_{\R^N} |2\pi\xi|^{2\overline s} |\widehat{u_n}(\xi)|^2\, d\xi\\
&\qquad\le \int_{B_\delta} |h(\xi)|^2 d\xi + \frac{1-(2\pi\delta)^{2\varepsilon_n}}{(2\pi\delta)^{2\overline s}} [u_n]^2_{\overline s}.
\end{split}
\end{equation}

Besides, since $u_n$ is bounded in $X^+(\Omega)$, by Lemma \ref{nons} and \eqref{mu0}, we have that
\begin{eqnarray*}&&
[u_n]^2_{\overline s} \le \frac{C^2(N, \Omega)}{\mu^+([\overline s, 1])} \int_{[\overline s, 1]} [u_n]^2_s \, d\mu^+(s) \le\frac{ C^2(N, \Omega)}{\mu^+([\overline s, 1])} \int_{[0, 1]} [u_n]^2_s \, d\mu^+(s)\\&&\qquad\qquad\qquad\le \frac{ C^2(N, \Omega)}{\mu^+([\overline s, 1])}\sup_{n\in\N}
\|u\|_{X^+}<+\infty.
\end{eqnarray*}
Hence, we infer from \eqref{OlJSnwqm50vbX0-708} that
\[
\lim_{n\to +\infty}\left|\,\int_{[0, \varepsilon_n]} [u_n]^2_s u\, d\mu^-_n(s) - \|u_n\|^2_{L^2(\R^N)} \right| \le  \int_{B_\delta} |h(\xi)|^2 d\xi.
\]
Finally, by taking the limit as $\delta\searrow 0$, we obtain the desired result in \eqref{limitemu-n3}.
\end{proof}

\begin{corollary}\label{lemma1}
Assume that \eqref{mu0}, \eqref{successionemu-1}, \eqref{successionemu-2} hold and let $u\in X^+(\Omega)$. 

Then,
\begin{equation}\label{limitemu-n}
\lim_{n\to +\infty}\left|\,\int_{[0, \varepsilon_n]} [u]^2_s\, d\mu^-_n(s) - \|u\|^2_{L^2(\R^N)} \right| = 0.
\end{equation}
\end{corollary}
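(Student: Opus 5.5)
The plan is to obtain Corollary~\ref{lemma1} as a special case of Lemma~\ref{lemma2}, applied to the constant sequence $u_n:=u$ for every $n\in\N$.

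First, since $u\in X^+(\Omega)$, the constant sequence converges weakly in $X^+(\Omega)$ to $u_*:=u$. Indeed, for every bounded linear functional $\Phi$ on the Hilbert space $X^+(\Omega)$ (recall~\eqref{X+Hilbert}), the sequence $\Phi(u_n)=\Phi(u)$ is constant. The hypotheses~\eqref{mu0}, \eqref{successionemu-1} and~\eqref{successionemu-2} are exactly those of Lemma~\ref{lemma2}. Plugging $u_n=u$ and $u_*=u$ into~\eqref{limitemu-n2} gives~\eqref{limitemu-n} at once.

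There is no real obstacle. The only point worth checking is that the proof of Lemma~\ref{lemma2} is unaffected by this choice. Its ingredients are the strong $L^2$ convergence coming from the compact embedding of Lemma~\ref{embeddingsXOmega}, the domination of $\widehat{u_n}$ by a fixed $L^2$ function, and the uniform bound on $[u_n]_{\overline s}$. For a constant sequence all three are trivial: one can take $h:=|\widehat u|$, and $\sup_n\|u_n\|_{X^+}=\|u\|_{X^+}$.

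If one preferred a self-contained argument, the estimate~\eqref{OlJSnwqm50vbX0-708} with $u_n=u$ gives directly
\[
\left|\,\int_{[0,\varepsilon_n]}[u]_s^2\,d\mu^-_n(s)-\|u\|^2_{L^2(\R^N)}\right|\le\int_{B_\delta}|\widehat u(\xi)|^2\,d\xi+\frac{1-(2\pi\delta)^{2\varepsilon_n}}{(2\pi\delta)^{2\overline s}}\,[u]^2_{\overline s}.
\]
By Lemma~\ref{nons} and~\eqref{mu0}, $[u]_{\overline s}$ is finite and controlled by $\|u\|_{X^+}$. One then lets $n\to+\infty$, so that the second term vanishes because $\varepsilon_n\to0$, and afterwards lets $\delta\searrow0$, so that the first term vanishes because $\widehat u\in L^2(\R^N)$.
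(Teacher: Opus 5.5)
Your proposal is correct and coincides with the paper's proof, which simply takes $u_n:=u_*:=u$ in Lemma~\ref{lemma2}. The additional self-contained check via the estimate~\eqref{OlJSnwqm50vbX0-708} is also valid but not needed.
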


\begin{proof} It suffices to choose~$u_n:=u_*:=u$
in Lemma \ref{lemma2}.
\end{proof}

\begin{corollary}\label{cor=1}
Assume that \eqref{mu0}, \eqref{successionemu-1}, \eqref{successionemu-2} hold. Let $u_{\varepsilon_n}$ be a sequence satisfying~\eqref{lambdaen1}
and converging weakly in $X^+(\Omega)$ to some $u_*$.

Then, 
\[
\|u_*\|_{L^2(\Omega)} =1.
\]
\end{corollary}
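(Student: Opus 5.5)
The plan is to apply Lemma~\ref{lemma2} directly, with the sequence $u_n:=u_{\varepsilon_n}$ and limit $u_*$. The hypotheses are exactly those of the corollary: \eqref{mu0}, \eqref{successionemu-1} and \eqref{successionemu-2} hold, and $u_{\varepsilon_n}$ converges weakly in $X^+(\Omega)$ to $u_*$. Lemma~\ref{lemma2} then gives
$$\lim_{n\to+\infty}\left|\,\int_{[0,\varepsilon_n]}[u_{\varepsilon_n}]_s^2\,d\mu_n^-(s)-\|u_*\|^2_{L^2(\R^N)}\right|=0.$$

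By the normalization \eqref{lambdaen1}, the integral inside the absolute value equals $1$ for every $n$. The displayed limit therefore reduces to $\big|1-\|u_*\|^2_{L^2(\R^N)}\big|=0$, which gives $\|u_*\|_{L^2(\R^N)}=1$.

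It remains to pass from $\R^N$ to $\Omega$. Every element of $X^+(\Omega)$ is an $L^2$-limit of functions in $C^\infty_c(\Omega)$, using the embedding in Lemma~\ref{embeddingsXOmega}. Hence $u_*$ vanishes a.e. outside $\Omega$, and $\|u_*\|_{L^2(\Omega)}=\|u_*\|_{L^2(\R^N)}=1$.

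There is no real obstacle here. The only point needing care is this last identification of the $L^2$ norms on $\R^N$ and on $\Omega$.
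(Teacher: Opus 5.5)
Your proposal is correct and is the paper's proof: apply Lemma~\ref{lemma2} to $u_{\varepsilon_n}$ and use the normalization \eqref{lambdaen1} to identify the limit as $1$. Your extra step, that $u_*$ vanishes outside $\Omega$ so that $\|u_*\|_{L^2(\R^N)}=\|u_*\|_{L^2(\Omega)}$, makes explicit a point the paper passes over silently.
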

\begin{proof}
By Lemma \ref{lemma2} and \eqref{lambdaen1}, we get
\[
1= \lim_{n\to +\infty}\int_{[0, \varepsilon_n]} [u_{\varepsilon_n}]^2_s u\, d\mu^-_n(s) = \|u_*\|_{L^2(\Omega)},
\]
as desired.
\end{proof}

\section{Some regularity results: proof of Theorem  \ref{thmregolarità}}\label{oajdcfwe9rtm92-34v50nVGm504r023902}
In this section we deal with the regularity theory in the nonlocal superposition setting and prove Theorem \ref{thmregolarità}.
To start with, we present the following preliminary results.

\begin{lemma}\label{misurainLp}
Let~$\widetilde s\in[0,1/2)$
and~$p\in(1,+\infty)$.
Let~$\Omega\subset\R^N$ be a bounded open set of class~$C^1$.

Then, for all\footnote{In our notation, functions
in~$W^{1,p}_0(\Omega)$ are implicitly assumed to be defined
in the whole of~$\R^N$, being null outside~$\Omega$. This is convenient to calculate the fractional Laplacian of these functions
without adding notational complications.} $u\in W^{1,p}_0(\Omega)$,
\[
\int_{[0,\widetilde s]}(-\Delta)^s u\,d\mu(s)\in L^p(\Omega).
\]
Moreover,  there exists a constant $C=C(N,\Omega,p,\widetilde s,\overline\mu)>0$ such that
\[
\left\|\,\int_{[0,\widetilde s]}(-\Delta)^s u\,d\mu(s) \right\|_{L^p(\Omega)}\leq C\|u\|_{W^{1,p}(\Omega)}.
\]
\end{lemma}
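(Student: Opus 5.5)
The plan is to bound $\|(-\Delta)^s u\|_{L^p(\Omega)}$ by $C\|u\|_{W^{1,p}(\Omega)}$ uniformly for $s\in[0,\widetilde s]$, and then integrate against $|\mu|$. Since $|\mu|([0,\widetilde s])\le\overline\mu$, Minkowski's integral inequality gives
$$\left\|\int_{[0,\widetilde s]}(-\Delta)^s u\,d\mu(s)\right\|_{L^p(\Omega)}\le \overline\mu\,\sup_{s\in[0,\widetilde s]}\|(-\Delta)^s u\|_{L^p(\Omega)}.$$
Measurability in $s$ is harmless: for smooth $u$ the map $s\mapsto(-\Delta)^s u(x)$ is continuous. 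We then argue by density of $C^\infty_c(\Omega)$ in $W^{1,p}_0(\Omega)$, using the uniform bound to pass to the limit. The case $s=0$ is trivial, since there $(-\Delta)^0u=u$.

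For $s\in(0,\widetilde s]$ we use the representation \eqref{AMMU0} and split the integral at $|y|=1$. Write $\delta_y u(x):=2u(x)-u(x+y)-u(x-y)$.

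\textbf{Far part.} Here $|\delta_y u(x)|\le 2|u(x)|+|u(x+y)|+|u(x-y)|$. Minkowski gives an $L^p$ bound of
$$4c_{N,s}\|u\|_{L^p}\int_{|y|>1}|y|^{-N-2s}\,dy=\frac{C c_{N,s}}{s}\,\|u\|_{L^p}.$$
This is bounded uniformly in $s$ because $c_{N,s}/s$ stays bounded as $s\searrow0$, since $c_{N,s}\sim s$ by \eqref{defcNs}.

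\textbf{Near part.} Since $s<1/2$, the kernel is integrable against $|y|$ near the origin, so we do not need to symmetrize or use second derivatives. Extending $u$ by zero gives $u\in W^{1,p}(\R^N)$, and
$$\|u(\cdot+y)-u\|_{L^p(\R^N)}\le |y|\,\|\nabla u\|_{L^p}.$$
Hence $\|\delta_y u\|_{L^p}\le 2|y|\|\nabla u\|_{L^p}$. Minkowski then bounds the near part by
$$2c_{N,s}\|\nabla u\|_{L^p}\int_{|y|<1}|y|^{1-N-2s}\,dy=\frac{C c_{N,s}}{1-2s}\,\|\nabla u\|_{L^p}\le\frac{C}{1-2\widetilde s}\,\|\nabla u\|_{L^p}.$$
This is uniform on $[0,\widetilde s]$, and this is exactly where the hypothesis $\widetilde s<1/2$ enters.

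\textbf{Remaining points.} We also have to check that the pointwise formula coincides with the weak or distributional fractional Laplacian for $W^{1,p}_0$ functions. For smooth compactly supported $u$ the integral converges absolutely and the identity is standard. For general $u$, the uniform bound shows that $(-\Delta)^s u_k$ is Cauchy in $L^p$ along an approximating sequence $u_k\to u$, so the limit is well defined and coincides with the distributional one. The same approximation handles joint measurability in $(s,x)$ and the Fubini exchange, so the $\mu$-integral defines an $L^p(\Omega)$ function with the stated bound.

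The main obstacle is keeping the constants uniform up to $s=0$ and $s=\widetilde s$. This is handled by the explicit behavior $c_{N,s}\sim s(1-s)$ together with $\widetilde s<1/2$.
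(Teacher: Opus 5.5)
Your proof is correct and follows the same route as the paper. Both apply Minkowski over $\mu$, split at $|y|=1$, control the near part by $\|\nabla u\|_{L^p}$ at a cost of $1/(1-2s)$, and control the far part by $\|u\|_{L^p}$ at a cost of $c_{N,s}/s$, which stays bounded. The only difference is that the paper further splits the near region into $y\in\Omega$ and $y\notin\Omega$ and handles the latter with a Hardy-type bound on $u/\operatorname{dist}(\cdot,\partial\Omega)$. You instead apply the translation estimate directly to the zero extension in $W^{1,p}(\R^N)$, which makes that extra term, and with it the $C^1$ assumption on $\Omega$, unnecessary.
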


\begin{proof}
We first observe that, by the Minkowski inequality,
\begin{equation}\label{zserfgyuhnjkop1}
\left\|\,\int_{[0,\widetilde s]}(-\Delta)^s u\,d\mu(s) \right\|_{L^p(\Omega)}\leq \int_{[0,\widetilde s]}\|(-\Delta)^s u\|_{L^p(\Omega)}\,d\mu^+(s)+\int_{[0,\widetilde s]}\|(-\Delta)^s u\|_{L^p(\Omega)}\,d\mu^-(s).
\end{equation}
We now focus on giving a bound for $\|(-\Delta)^s u\|_{L^p(\Omega)}$.  For all $s\in (0, \widetilde s]$,  we can write
\begin{equation}\label{zserfgyuhnjkop2}
\begin{aligned}
\|(-\Delta)^s u\|_{L^p(\Omega)}&\leq \left(\int_\Omega\left|c_{N,s}\int_{\Omega\cap B_1(x)}\frac{u(x)-u(y)}{|x-y|^{N+2s}}dy\right|^p dx\right)^\frac{1}{p} \\
&\quad +\left(\int_\Omega\left|c_{N,s}\int_{(\R^N\setminus \Omega)\cap B_1(x)}\frac{u(x)-u(y)}{|x-y|^{N+2s}}dy\right|^p dx\right)^\frac{1}{p} \\
&\quad+\left(\int_\Omega\left|c_{N,s}\int_{\R^N\setminus B_1(x)}\frac{u(x)-u(y)}{|x-y|^{N+2s}}dy\right|^p dx\right)^\frac{1}{p} \\
&=:I_1+I_2+I_3.
\end{aligned}
\end{equation}
We first deal with $I_1$. 
To this end, by using the change of variables $z:=y-x$ and the H\"older inequality with conjugate exponents $p$ and $q=\frac{p}{p-1}$, we get
\begin{equation}\label{zserfgyuhnjkop3}
\begin{aligned}
I_1^p&\leq c_{N,s}^p\int_{\R^N}\left(\,\int_{B_1(0)}\frac{|u(x)-u(x+z)|}{|z|^{N+2s}}dz\right)^pdx \\
&\leq c_{N,s}^p\int_{\R^N}\left(\,\int_{B_1(0)}\int_0^1\frac{|\nabla u(x+\vartheta z)|}{|z|^{N+2s-1}}d\vartheta\,dz\right)^p dx \\
&\leq c_{N,s}^p \int_{\R^N}\left(\,\int_{B_1(0)}\int_0^1\frac{|\nabla u(x+\vartheta z)|^p}{|z|^{N+2s-1}}d\vartheta\,dz\right)\left(\,\int_{B_1(0)}\int_0^1\frac{1}{|z|^{N+2s-1}}d\vartheta\,dz\right)^\frac{p}{q} dx \\
&\leq c_{N,s}^p\left(\,\int_{B_1(0)}\frac{1}{|z|^{N+2s-1}}\,dz\right)^p \|u\|_{W^{1,p}(\R^N)}^p \\
&= \left(\frac{c_{N,s}\,\omega_{N-1}}{1-2s}\right)^p\|u\|_{W^{1,p}(\Omega)}^p.
\end{aligned}
\end{equation}

Now we deal with~$I_2$.
For any $x\in \Omega$, we denote by $x_0\in \partial \Omega$ a point such that $|x-x_0|=\operatorname{dist}(x,\partial\Omega)$. 
Notice that, for every~$y\in \R^N\setminus \Omega $, one has that~$|x-x_0|\le |x-y|$.
From this, recalling that $u\in W^{1,p}_0(\Omega)$, we get
\begin{equation}\label{zserfgyuhnjkop4}
\begin{aligned}
I_2^p&=c_{N,s}^p\int_\Omega\left|\,\int_{(\R^N\setminus \Omega)\cap B_1(x)}\frac{u(x)}{|x-y|^{N+2s}}dy\right|^p dx \\
&\le c_{N,s}^p\int_\Omega\left|\,\int_{ B_1(x)}\frac{u(x)}{|x-y|^{N+2s-1}|x-x_0|}dy\right|^p dx \\
&= c_{N,s}^p\int_\Omega\left|\,\int_{ B_1(0)}\frac{u(x)}{|z|^{N+2s-1}|x-x_0|}dz\right|^p dx \\
&= c_{N,s}^p\int_\Omega\left|\frac{u(x)}{|x-x_0|}\right|^p dx \ \left|\,\int_{B_1(0)}\frac{dz}{|z|^{N+2s-1}}dz\right|^p   \\
&=\left(\frac{c_{N,s}\,\omega_{N-1}}{1-2s}\right)^p\int_\Omega\left|\frac{u(x)}{|x-x_0|}\right|^p\,dx \\
&\leq \left(\frac{c_{N,s}\,\omega_{N-1}}{1-2s}\right)^p\int_{\R^N}\int_0^1 |\nabla u(x+\vartheta(x-x_0))|^p d\vartheta \,dx \\
&\leq \left(\frac{c_{N,s}\,\omega_{N-1}}{1-2s}\right)^p \|u\|_{W^{1,p}(\R^N)}^p\\&= \left(\frac{c_{N,s}\,\omega_{N-1}}{1-2s}\right)^p\|u\|_{W^{1,p}(\Omega)}^p.
\end{aligned}
\end{equation}

We now consider $I_3$.  By using
the H\"older inequality with exponents $p$ and $q=\frac{p}{p-1}$,  we obtain that
\begin{equation}\label{zserfgyuhnjkop5}
\begin{aligned}
I_3^p&\leq c_{N,s}^p\int_\Omega\left|\,\int_{\R^N\setminus B_1(x)}\frac{|u(x)|+|u(y)|}{|x-y|^{N+2s}}dy\right|^p dx \\
&\leq c_{N,s}^p\int_{\R^N}\left(\,\int_{\R^N\setminus B_1(0)}\frac{dz}{|z|^{N+2s}}\right)^\frac{p}{q}\left(\,\int_{\R^N\setminus B_1(0)}\frac{(|u(x)|+|u(x+z)|)^p}{|z|^{N+2s}}dz\right)dx \\
&\leq 2^{p-1}c_{N,s}^p\left(\,\int_{\R^N\setminus B_1(0)}\frac{dz}{|z|^{N+2s}}\right)^\frac{p}{q}\int_{\R^N}\int_{\R^N\setminus B_1(0)}\frac{|u(x)|^p+|u(x+z)|^p}{|z|^{N+2s}}dz\,dx \\
&\leq 2^pc_{N,s}^p\left(\,\int_{\R^N\setminus B_1(0)}\frac{dz}{|z|^{N+2s}}\right)^p \|u\|_{L^p(\R^N)}^p\\&\le\left(\frac{ c_{N,s}\,\omega_{N-1} }{s}\right)^p\|u\|_{W^{1,p}(\Omega)}^p.
\end{aligned}
\end{equation}

We now recall the setting in \eqref{defcNs} and define the constants
\[
c_1:=\max_{s\in[0,1]}c_{N,s} \quad \mbox{and} \quad c_2:=\max_{s\in[0,1]}\frac{c_{N,s}}{s}.
\]
Then, plugging \eqref{zserfgyuhnjkop3}, \eqref{zserfgyuhnjkop4} and \eqref{zserfgyuhnjkop5} into \eqref{zserfgyuhnjkop2}, we get
\begin{equation}\label{PKS35rik4301hgb:SXKD}
\|(-\Delta)^s u\|_{L^p(\Omega)}\leq \omega_{N-1}\left(\frac{2c_1}{1-2s}+c_2\right)\|u\|_{W^{1,p}(\Omega)}.
\end{equation}This estimate was obtained for all~$s\in (0, \widetilde s]$, but it is also obviously true when~$s=0$.
Accordingly, from~\eqref{zserfgyuhnjkop1} and~\eqref{PKS35rik4301hgb:SXKD}, we get
\[
\left\|\,\int_{[0,\widetilde s]}(-\Delta)^s u\,d\mu(s) \right\|_{L^p(\Omega)}\leq \omega_{N-1}\left(\frac{2c_1}{1-2\widetilde s}+c_2\right)(\mu^+([0,\widetilde s])+\mu^-([0,\widetilde s]))\|u\|_{W^{1,p}(\Omega)}.
\]
This concludes the proof.
\end{proof}

We are now ready to give the proof of Theorem \ref{thmregolarità}.

\begin{proof}[Proof of Theorem \ref{thmregolarità}] In a nutshell, we would like to combine
Lemma~\ref{misurainLp} with classical elliptic regularity theory,
by consider lower-order operator as a perturbation to be reabsorbed into
the right-hand side of the equation. When implementing this idea, however,
some care is needed depending on the exponents under consideration.

More specifically, we define
$$ \widetilde f:=f-\int_{[0,\widetilde s]}(-\Delta)^s u\,d\mu(s)$$
and we deduce from Lemma~\ref{misurainLp} that, for all~$P>1$,
\begin{equation}\label{PKSXe.dwf}
\|\widetilde f\|_{L^P(\Omega)}\le\|f\|_{L^P(\Omega)}
+
\left\|\,\int_{[0,\widetilde s]}(-\Delta)^s u\,d\mu(s) \right\|_{L^P(\Omega)}\leq \|f\|_{L^P(\Omega)}+C\|u\|_{W^{1,P}(\Omega)},\end{equation}
where we are, in principle, allowing some of the above norms to be infinite.

We also write~\eqref{problemaregolarità} in the form
\begin{equation*}
\begin{cases}
-\Delta u 
=\widetilde f(x) & \mbox{in } \Omega,
\\
u=0  & \mbox{in } \R^N\setminus \Omega.
\end{cases}
\end{equation*}
This allows us to apply classical elliptic results, such as~\cite[Theorem 5.3.5]{MR4784613}, and conclude that,
for all~$P>1$,
\begin{equation*}
\|u\|_{W^{2,P}(\Omega)}\leq C_1\,\|\widetilde f\|_{L^P(\Omega)},
\end{equation*}
for some constant~$C_1$, depending only on~$N$, $p$ and~$\Omega$.

This observation and~\eqref{PKSXe.dwf} give that, for all~$P>1$,
\begin{equation}\label{bg-92X4cv0 6m}
\|u\|_{W^{2,P}(\Omega)}\leq C_2\Big(
\|f\|_{L^P(\Omega)}+\|u\|_{W^{1,P}(\Omega)}\Big),
\end{equation}
for some constant~$C_2$, depending only on~$N$, $P$, $\Omega$,
$\widetilde s$ and~$\overline\mu$.

It is also helpful to recall that,
by an interpolation inequality (see e.g.~\cite[equation~(5.2.54)]{MR4784613}), given~$\e\in(0,1)$ we know that
\begin{equation}\label{INY} \|u\|_{W^{1,P}(\Omega)}\le \e\|u\|_{W^{2,P}(\Omega)}+C_\e\,\|u\|_{L^{P}(\Omega)},\end{equation}
for some~$C_\e$, depending only on~$N$, $P$, $\Omega$ and~$\e$.\bigskip

Now, let~$p$ be as in the statement of Theorem~\ref{thmregolarità}.
When~$p\in(1,2)$, we have that~$\|u\|_{W^{1,p}(\Omega)}$
is controlled by~$\|u\|_{H^1(\Omega)}$, which is finite by assumption,
hence~\eqref{bg-92X4cv0 6m}  (used with~$P:=p$)
leads to
\begin{equation}\label{08idk3vbbgy6ygcdr5m6o}
\|u\|_{W^{2,p}(\Omega)}\leq C_2\Big(
\|f\|_{L^p(\Omega)}+\|u\|_{W^{1,p}(\Omega)}\Big)<+\infty.\end{equation}
This and~\eqref{INY} give that
$$ \|u\|_{W^{2,p}(\Omega)}
\le
C_2\Big(
\|f\|_{L^p(\Omega)}+\e\|u\|_{W^{2,p}(\Omega)}+C_\e\,\|u\|_{L^{p}(\Omega)}\Big)
.$$One can now choose~$\e:=\frac{1}{2C_2}$, which allows one
to reabsorb the term~$\|u\|_{W^{2,p}(\Omega)}$
into the left-hand side and obtain~\eqref{inequalityregolaritàlow}
(we stress that to perform this reabsorption it is crucial that~$\|u\|_{W^{2,p}(\Omega)}$
is finite, and indeed~\eqref{08idk3vbbgy6ygcdr5m6o} ensures this).\bigskip

The proof of~\eqref{inequalityregolarità} is slightly more complicated, since, in general,
we do not know to start with that~$u\in W^{1,p}(\Omega)$ (and not even that~$u\in
L^p(\Omega)$); in this case, a recursive argument will be needed to get around this difficulty.

This recursive argument will be implemented on page~\pageref{APKsqwfe-r8ngtoglk4}, 
but, before that, let us make some general considerations.
First,  we remark that when~$N\le p$
we have that~$W^{1,p}(\Omega)$ is continuously contained in~$L^q(\Omega)$
for every~$q\in[1,+\infty)$, due to the Sobolev Embedding Theorem.
In particular, in this scenario, $\|u\|_{W^{1,p}(\Omega)}\le \widehat C\, \|u\|_{L^2(\Omega)}$, for some constant~$\widehat C$, depending only on~$N$, $p$ and~$\Omega$,
which, in tandem with~\eqref{bg-92X4cv0 6m},
retrurns that
\begin{equation*}
\|u\|_{W^{2,p}(\Omega)}\leq C_2\Big(
\|f\|_{L^p(\Omega)}+\|u\|_{W^{1,p}(\Omega)}\Big)\le
C_2\Big(
\|f\|_{L^p(\Omega)}+\widehat C\,\|u\|_{L^2(\Omega)}\Big).
\end{equation*}
This proves~\eqref{inequalityregolarità} when\footnote{We note that the case~$N\le p$
includes in particular the case $N=1$ in~\eqref{inequalityregolarità}.}
$N\le p$,
hence, without loss of generality, we can now assume that
\begin{equation}\label{SLMwe2.12}
N>p.
\end{equation}

Now we point out that, 
to prove~\eqref{inequalityregolarità}, it suffices to show (up to renaming constants) that
\begin{equation}\label{inequalityregolaritàSEM}
\|u\|_{W^{2,p}(\Omega)}\leq 
C\Big(\|f\|_{L^p(\Omega)}+\|u\|_{H^1(\Omega)}\Big).
\end{equation}
Indeed, since~$p\ge2$, we have that
\begin{equation}\label{inequalityregolaritàSEM:CHHA}
\|u\|_{H^2(\Omega)}\le C_\star\,\|u\|_{W^{2,p}(\Omega)}\end{equation}
for some~$C_\star>0$ depending only on~$n$, $p$ and~$\Omega$.
Hence, if~\eqref{inequalityregolaritàSEM} is satisfied we know that
\begin{equation}\label{inequalityregolaritàSEM:CH}
\|u\|_{W^{2,p}(\Omega)}<+\infty.\end{equation}
Also, using~\eqref{INY} with~$P:=2$,
$$\|u\|_{H^1(\Omega)}\le \e\|u\|_{H^2(\Omega)}+C_\e\,\|u\|_{L^2(\Omega)}.$$
This, in combination with~\eqref{inequalityregolaritàSEM}
and~\eqref{inequalityregolaritàSEM:CHHA}, gives that
\begin{eqnarray*}
\|u\|_{W^{2,p}(\Omega)}&\leq &
C\Big(\|f\|_{L^p(\Omega)}+\e\|u\|_{H^2(\Omega)}+C_\e\,\|u\|_{L^2(\Omega)}\Big)\\&\le&
C\Big(\|f\|_{L^p(\Omega)}+\e \,C_\star \,\|u\|_{W^{2,p}(\Omega)}+C_\e\,\|u\|_{L^2(\Omega)}\Big).\end{eqnarray*}
We can now choose~$\e:=\frac{1}{2C \,C_\star}$ and reabsorb~$\|u\|_{W^{2,p}(\Omega)}$
into the left-hand side, thus obtaining~\eqref{inequalityregolarità}
(and we stress that~\eqref{inequalityregolaritàSEM:CH} has been used in this reabsorption).\bigskip

Hence, our goal now is to prove~\eqref{inequalityregolaritàSEM}
and, to accomplish this,
we proceed iteratively. 
We define~$P_0:=p$ and recursively, for all~$\ell\in\N$,
\begin{equation}\label{a2lbeee23kepndlerv}
P_{\ell+1}:=\frac{N P_\ell}{N+P_\ell}.\end{equation}
One can check by induction that~$P_\ell>0$ for all~$\ell\in\N$.
As a consequence of this, one sees that
\begin{equation}\label{PA:SLPKowerf}
{\mbox{$P_\ell$ is a decreasing sequence}}\end{equation}
and, accordingly, we can define
$$ P_\infty:=\lim_{\ell\to+\infty}P_\ell.$$
By construction,
$$ P_\infty=\lim_{\ell\to+\infty}P_{\ell+1}=\lim_{\ell\to+\infty}
\frac{N P_\ell}{N+P_\ell}=\frac{N P_\infty}{N+P_\infty}$$
and therefore~$P_\infty=0$.

Consequenly, we can select~$L\in\N$, depending on~$N$ and~$p$,
which is the largest index for which~$P_{L}\ge2$, namely
\begin{equation}\label{LJSM-rfpgkrb-01}
P_{L+1}<2\le P_{L}\le\dots\le P_0=p.\end{equation}

Interestingly, since the map~$(0,+\infty)\ni r\mapsto\nu(r):= \frac{Nr}{N+r}$
is increasing, we have that
\begin{equation}\label{LJSM-rfpgkrb-02}
P_{L+1}=\nu(P_L)\ge \nu(2)=\frac{2N}{N+2}=
2-\frac{4}{N+2}>2-\frac{4}{p+2}\ge2-\frac{4}{2+2}=1.
\end{equation}
owing to~\eqref{SLMwe2.12}.

It is also useful to notice that, in view of~\eqref{SLMwe2.12} and~\eqref{PA:SLPKowerf}, for all~$\ell\in\N$ we have that~$P_{\ell+1}\le
P_0=p<N$. This allows us to invert~\eqref{a2lbeee23kepndlerv} and find that
\begin{equation}\label{a2lbeee23kepndlerv.b}
P_{\ell}=\frac{N P_{\ell+1}}{N-P_{\ell+1}}.\end{equation}

Now we set$$p_j:=P_{L+1-j}.$$ We observe that, by~\eqref{PA:SLPKowerf},
$p_j$ is an increasing sequence
and, by~\eqref{LJSM-rfpgkrb-01} and~\eqref{LJSM-rfpgkrb-02},
\begin{equation}\label{LJSM-rfpgkrb-03}p_0=P_{L+1}\in(1,2).\end{equation}
Also, using~\eqref{a2lbeee23kepndlerv.b} with~$\ell:=L-j$,
\begin{equation}\label{LJSM-rfpgkrb-03.12e023}
p_{j+1}=P_{L-j}=\frac{N P_{L+1-j}}{N-P_{L+1-j}}=\frac{N p_j}{N-p_j}.
\end{equation}

We claim that, for all~$j\in\{0,\dots,L+1\}$,
\begin{equation}\label{89:L-qe32-r}
\|u\|_{W^{2,p_j}(\Omega)}\leq C_j\Big(
\|f\|_{L^p(\Omega)}+\|u\|_{H^1(\Omega)}\Big),
\end{equation}
for some positive constant~$C_j$, depending only on~$N$, $p$, $\Omega$,
$\widetilde s$, $\overline\mu$ and~$j$.

The proof of this claim is by induction. \label{APKsqwfe-r8ngtoglk4}
First, using~\eqref{bg-92X4cv0 6m}
with~$P:=2$ we obtain that
\begin{equation}\label{89:L-qe32-r.12e4r23t}\|u\|_{W^{2,2}(\Omega)}\leq C_2\Big(
\|f\|_{L^2(\Omega)}+\|u\|_{W^{1,2}(\Omega)}\Big).\end{equation}
Furthermore, since~$p\ge2$, we can control~$\|f\|_{L^2(\Omega)}$ from above
by~$\|f\|_{L^p(\Omega)}$. On the same ground, by~\eqref{LJSM-rfpgkrb-03},
we can control~$\|u\|_{W^{2,p_0}(\Omega)}$ from above
by~$\|u\|_{W^{2,2}(\Omega)}$. These observations and~\eqref{89:L-qe32-r.12e4r23t}
give~\eqref{89:L-qe32-r} with~$j=0$, which constitutes the basis of the induction.
 
Suppose now that~\eqref{89:L-qe32-r} for some index~$j\le L$.
We consider the Sobolev exponent~$p_j^*$ of~$p_j$. Indeed,
owing to~\eqref{SLMwe2.12},
it holds that~$N>p=P_0= p_{L+1}\ge p_j$,  therefore, recalling~\eqref{LJSM-rfpgkrb-03.12e023},
\begin{equation}\label{52twgsp1o2elfi2u3e0-skjdm2prt}
p_j^*=\frac{Np_j}{N-p_j}=p_{j+1}.\end{equation}

Thus, we make use of
the Sobolev immersion of~$W^{2,p_j}(\Omega)$
into~$W^{1,p_j^*}(\Omega)$ to deduce from~\eqref{89:L-qe32-r} that
\begin{equation*} 
\|u\|_{W^{1,p_j^*}(\Omega)}\leq C_j^*\Big(
\|f\|_{L^p(\Omega)}+\|u\|_{H^1(\Omega)}\Big),
\end{equation*}for some positive constant~$C_j^*$, depending only on~$N$, $p$, $\Omega$,
$\widetilde s$, $\overline\mu$ and~$j$.

This and~\eqref{bg-92X4cv0 6m}, used here with~$P:=p_j^*$, return that
\begin{equation}\label{0LL13-23e}
\|u\|_{W^{2,p_j^*}(\Omega)}\leq C_2\Big(
\|f\|_{L^{p_j^*}(\Omega)}+\|u\|_{W^{1,p_j^*}(\Omega)}\Big)
\le \widetilde C_j\Big(\|f\|_{L^{p_j^*}(\Omega)}+
\|f\|_{L^p(\Omega)}+\|u\|_{H^1(\Omega)}\Big),
\end{equation}for some positive constant~$\widetilde C_j$, depending only on~$N$, $p$, $\Omega$,
$\widetilde s$, $\overline\mu$ and~$j$. 

We stress that~$p_{j+1}\le p_{L+1}=P_0= p$, hence~$\|f\|_{L^{p_{j+1}}(\Omega)}$
is controlled from above by~$\|f\|_{L^{p}(\Omega)}$, up to constants.
This observation, \eqref{0LL13-23e} and~\eqref{52twgsp1o2elfi2u3e0-skjdm2prt}
give~\eqref{89:L-qe32-r} for the index~$j+1$, as desired.
The proof of~\eqref{89:L-qe32-r} is thereby complete.

Using now~\eqref{89:L-qe32-r} with~$j:=L+1$
we obtain~\eqref{inequalityregolaritàSEM}, as desired.
\end{proof}

As a simple consequence of Theorem \ref{thmregolarità}
and the Sobolev Embedding Theorem, one also obtains:

\begin{corollary}\label{corregolarità}
Let~$\widetilde s\in[0,1/2)$. Assume that $f\in L^p(\Omega)$ with $p>N$. Let $u\in H^1_0(\Omega)$ be a weak solution of problem~\eqref{problemaregolarità}.

Then, $u\in C^{1,\alpha}(\Omega)$ with $\alpha=1-\frac{N}{p}$ and
there exists a constant $C=C(N,p,\Omega,\widetilde s,\overline\mu)$ such that
\begin{equation}\label{inequalityregolaritàcor}
\|u\|_{C^{1,\alpha}(\Omega)}\leq 
C(\|f\|_{L^p(\Omega)}+\|u\|_{L^2(\Omega)}).
\end{equation}
\end{corollary}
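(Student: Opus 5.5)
The plan is to combine the estimate \eqref{inequalityregolarità} of Theorem~\ref{thmregolarità} with the Morrey--Sobolev embedding $W^{2,p}(\Omega)\hookrightarrow C^{1,\alpha}(\overline\Omega)$, where $\alpha=1-\frac{N}{p}$.

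First, since $p>N\ge1$, we may use Theorem~\ref{thmregolarità} with this $p$. One point needs a check: \eqref{inequalityregolarità} is stated for $N=1$ or $p\ge2$. If $N\ge2$, then $p>N\ge2$, so we are in that regime. If $N=1$, the case $N=1$ is covered explicitly. In both cases we get $u\in W^{2,p}(\Omega)$ and
$$\|u\|_{W^{2,p}(\Omega)}\le C\big(\|f\|_{L^p(\Omega)}+\|u\|_{L^2(\Omega)}\big),$$
with $C=C(N,p,\Omega,\widetilde s,\overline\mu)$.

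Next, apply the Sobolev embedding to each first derivative. Since $\Omega$ is bounded with $C^1$ boundary, it is an extension domain. We have $\partial_i u\in W^{1,p}(\Omega)$ with $p>N$, so Morrey's inequality gives $\partial_i u\in C^{0,1-N/p}(\overline\Omega)$. Applying the same inequality to $u$ itself gives $u\in C^{0,1-N/p}(\overline\Omega)$ as well. Together these yield
$$\|u\|_{C^{1,\alpha}(\Omega)}\le C'\|u\|_{W^{2,p}(\Omega)},$$
where $C'$ depends only on $N$, $p$ and $\Omega$. Chaining this with the previous display proves \eqref{inequalityregolaritàcor}.

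There is no real obstacle here. The only delicate point is the case bookkeeping above, which ensures that the $L^2$ version \eqref{inequalityregolarità} applies rather than the $L^p$ version \eqref{inequalityregolaritàlow}. That bookkeeping reduces to the observation that $p>N$ forces $p\ge2$ whenever $N\ge2$.
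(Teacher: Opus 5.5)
Your proposal is correct and is exactly the paper's argument: the paper presents the corollary as a direct consequence of the $W^{2,p}$ estimate (the one with $\|u\|_{L^2(\Omega)}$ on the right-hand side) from the regularity theorem, combined with the Sobolev--Morrey embedding $W^{2,p}(\Omega)\hookrightarrow C^{1,\alpha}(\overline\Omega)$, $\alpha=1-\frac{N}{p}$, for $p>N$. Your check that $p>N$ always places you in the admissible regime (either $N=1$, or $p>N\ge2$) is precisely the bookkeeping the paper leaves implicit.
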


For completeness, we also mention that in the sole presence of a positive measure,
the estimates in~\eqref{inequalityregolarità} and~\eqref{inequalityregolaritàlow}
can be improved, as stated in the following result:

\begin{corollary}\label{thmregolarità:IMPRO}
Let~$\widetilde s\in[0,1/2)$.
Assume that $f\in L^p(\Omega)$ with $p\in(1,+\infty)$. 

Let $u\in H^1_0(\Omega)$ be a weak solution of the problem
\begin{equation*}
\begin{cases}
-\Delta u +\displaystyle\int_{[0,\widetilde s]}(-\Delta)^s u\,d\mu^+(s)
=f(x) & \mbox{in } \Omega,
\\
u=0  & \mbox{in } \R^N\setminus \Omega.
\end{cases}
\end{equation*}

Then, $u\in W^{2,p}(\Omega)$ and there exists a constant $C=C(N,p,\Omega,\widetilde s,\overline\mu)$ such that$$
\|u\|_{W^{2,p}(\Omega)}\leq 
C\, \|f\|_{L^p(\Omega)}.$$
\end{corollary}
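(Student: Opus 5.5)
The plan is to prove the improved estimate
\[
\|u\|_{W^{2,p}(\Omega)}\le C\,\|f\|_{L^p(\Omega)}
\]
by contradiction and compactness. We already know from Theorem~\ref{thmregolarità} (applied with~$\mu:=\mu^+$) that~$u\in W^{2,p}(\Omega)$, together with the bound~\eqref{inequalityregolarità} or~\eqref{inequalityregolaritàlow}. So it suffices to show that the lower-order term there can be removed, that is,
\[
\|u\|_{L^q(\Omega)}\le C\,\|f\|_{L^p(\Omega)},
\]
with~$q:=2$ when~$p\ge2$ or~$N=1$, and~$q:=p$ when~$p\in(1,2)$.

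Suppose this fails. Then there are a sequence of measures~$\mu^+_k$ with total mass at most~$\overline\mu$ (or simply a fixed~$\mu^+$, if the constant is only required to depend on~$\mu^+$ through~$\overline\mu$; I will handle the sequence case to get the stated dependence), solutions~$u_k$ and data~$f_k$ such that
\[
\|u_k\|_{L^q(\Omega)}=1\qquad\mbox{and}\qquad \|f_k\|_{L^p(\Omega)}\to0 .
\]
By Theorem~\ref{thmregolarità}, whose constant depends only on~$N,p,\Omega,\widetilde s,\overline\mu$, the sequence~$u_k$ is bounded in~$W^{2,p}(\Omega)$. By Rellich, up to a subsequence~$u_k\to u$ strongly in~$W^{1,p}(\Omega)$, and also in~$L^q(\Omega)$: for~$q=2\le p$ this is immediate, while for~$q=p$ it is trivial. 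In particular~$\|u\|_{L^q(\Omega)}=1$. The traces vanish, so~$u\in W^{1,p}_0(\Omega)$ extended by zero. The measures~$\mu^+_k$ restricted to~$[0,\widetilde s]$ converge weakly-$*$ (up to a subsequence) to a nonnegative measure~$\mu^+_\infty$ supported in~$[0,\widetilde s]$.

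Next I pass to the limit in the weak formulation. For~$\varphi\in C^\infty_c(\Omega)$,
\[
\int_\Omega\nabla u_k\cdot\nabla\varphi+\int_{[0,\widetilde s]}\int_{\R^N}u_k\,(-\Delta)^s\varphi\,dx\,d\mu^+_k(s)=\int_\Omega f_k\varphi .
\]
The map~$s\mapsto\int u\,(-\Delta)^s\varphi$ is continuous on~$[0,\widetilde s]$, and~$u_k\to u$ in~$L^1$. Using that~$\|(-\Delta)^s\varphi\|_{L^{p'}}$ is bounded uniformly in~$s$, the limit equation
\[
-\Delta u+\int_{[0,\widetilde s]}(-\Delta)^s u\,d\mu^+_\infty(s)=0
\]
holds weakly, with~$u\in W^{2,p}\cap W^{1,p}_0$.

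Now I test this equation against~$u$ itself. We need~$u\in H^1_0(\Omega)$: for~$p\ge2$ this is clear, and for~$p<2$ a bootstrap using Theorem~\ref{thmregolarità} with right-hand side~$0$ should give it (since~$u\in W^{2,p}$ improves integrability via Sobolev). Testing gives
\[
\int_\Omega|\nabla u|^2+\int_{[0,\widetilde s]}[u]_s^2\,d\mu^+_\infty(s)=0 .
\]
Both terms are nonnegative, so~$\nabla u=0$, hence~$u=0$ by the Dirichlet condition. This contradicts~$\|u\|_{L^q}=1$.

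\textbf{Main obstacle.} The delicate step is justifying the test with~$u$ when~$p<2$, i.e.\ showing the limit~$u$ lies in~$H^1_0(\Omega)$ so that the energy identity is legitimate. If the bootstrap is awkward, I will instead first prove the estimate in the case~$p\ge2$, then handle~$p\in(1,2)$ by duality. Alternatively, for~$p<2$ one can note that the original~$u_k$ are in~$H^1_0$ by hypothesis, and run the contradiction using Theorem~\ref{thmregolarità} at the exponent~$2$ to get the needed~$H^1$ bounds.
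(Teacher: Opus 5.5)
Your strategy is the paper's. You normalize by $\|u\|_{L^{\min\{p,2\}}(\Omega)}$, take a uniform $W^{2,p}$ bound from Theorem~\ref{thmregolarità}, extract a limit by Rellich--Kondrachov, show it solves the homogeneous equation, and kill it with the energy identity. Two of your choices are more careful than the paper's:
\begin{itemize}
\item You let the measures $\mu^+_k$ vary (with mass at most $\overline\mu$) and pass to a weak-$*$ limit. This is what is needed for a constant that depends on $\mu^+$ only through $\overline\mu$. The paper keeps $\mu^+$ fixed, so its contradiction argument only yields a constant depending on $\mu^+$ itself.
\item You pass to the limit in the nonlocal term via $u_k\to u$ in $L^1(\Omega)$ and the uniform bound on $(-\Delta)^s\varphi$. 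This avoids the $H^1$ convergence that the paper relies on.
\end{itemize}

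The gap is the step you flag yourself. For $p\ge\frac{2N}{N+2}$ it is harmless, since $W^{2,p}(\Omega)\subset H^1(\Omega)$ and the limit has zero trace. For $N\ge3$ and $p<\frac{2N}{N+2}$, however, the limit is only known to lie in $W^{2,p}(\Omega)\cap W^{1,p}_0(\Omega)$, so testing with $u$ is not justified.
\begin{itemize}
\item Your first remedy is circular: Theorem~\ref{thmregolarità} assumes $u\in H^1_0(\Omega)$, which is exactly what is missing.
\item Your third remedy fails: the $f_k$ are only in $L^p$, and since $L^p(\Omega)\not\subset H^{-1}(\Omega)$ in this range, no $H^1$ bound in terms of $L^p$ data is available.
\item Your duality remedy does work and is short. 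Once the estimate holds at the exponent $p'>2$ (where your argument is complete), take $h\in C^\infty_c(\Omega)$ and let $w\in H^1_0(\Omega)$ be the Lax--Milgram solution with datum $h$. Then $w\in W^{1,p'}_0(\Omega)$ and $\|w\|_{L^{p'}(\Omega)}\le C\|h\|_{L^{p'}(\Omega)}$. Test the equation of $u$ with $w$ (approximating $w$ in $W^{1,p'}$) and the equation of $w$ with $u$ (approximating $u$ in $H^1$). Symmetry of the bilinear form gives $\int_\Omega uh\,dx=\int_\Omega fw\,dx\le C\|f\|_{L^p(\Omega)}\|h\|_{L^{p'}(\Omega)}$. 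Hence $\|u\|_{L^p(\Omega)}\le C\|f\|_{L^p(\Omega)}$, and~\eqref{inequalityregolaritàlow} concludes.
\end{itemize}
A bootstrap also works if done by hand, using Lemma~\ref{misurainLp} (which only needs $W^{1,r}_0$) together with the classical $W^{2,r}$ theory for the Dirichlet Laplacian.

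The paper's own proof has the same hole. It asserts that the $W^{1,p}$ convergence holds ``in particular in $H^1(\Omega)$'', but a $W^{2,p}$ bound only yields this for $p>\frac{2N}{N+2}$.
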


The proof of Corollary~\ref{thmregolarità:IMPRO} is based on a compactness argument combined with
a suitable maximum principle. The details are presented in Appendix~\ref{thmregolarità:IMPRO:APP}

\section{Analysis of the doubly-nonlocal
first eigenvalue of problem \eqref{eigenvalueproblemmu}:
proof of Theorems~\ref{main1} and \ref{simple>0}}\label{0qspidkfewp0rthn9nf5ac01sklO}
In this section we provide the proofs of Theorems~\ref{main1} and \ref{simple>0}.
We start with the proof of Theorem \ref{main1}.

\begin{proof}[Proof of Theorem \ref{main1}]
We set
\begin{equation}\label{lambda*def}
\lambda_*:=\lim_{n\to +\infty}\lambda_{\varepsilon_n}
\end{equation}
and we claim that
\begin{equation}\label{claimlambda0}
\lambda_*=\lambda_0. 
\end{equation}
Indeed, from \eqref{lambdaen2} and \eqref{lambda0u0} we have
\[
\lambda_{\varepsilon_n}\leq \dfrac{\displaystyle\int_{[0, 1]} [u_0]^2_s \, d\mu^+(s)}{\displaystyle \int_{[0,\varepsilon_n] } [u_0]^2_s \, d\mu_n^-(s)}= \lambda_0  \dfrac{\|u_0\|^2_{L^2(\Omega)}}{\displaystyle \int_{[0,\varepsilon_n] } [u_0]^2_s \, d\mu_n^-(s)}.
\]
Hence, by passing to the limit, and using \eqref{lambda*def} and Corollary~\ref{lemma1}, we obtain that
\[
\lambda_* =\lim_{n\to +\infty}\lambda_{\varepsilon_n}\le \lambda_0 \lim_{n\to +\infty} \dfrac{\|u_0\|^2_{L^2(\Omega)}}{\displaystyle \int_{[0,\varepsilon_n] } [u_0]^2_s \, d\mu_n^-(s)} = \lambda_0,
\]
that is
\begin{equation}\label{claimlambda01}
\lambda_*\leq \lambda_0.
\end{equation}
Moreover, in virtue of~\eqref{lambdaen2} and \eqref{normadefinizione}, we see that $\lambda_{\varepsilon_n}=\|u_{\varepsilon_n}\|^2_{X^+}$.  From this and \eqref{lambda*def} we infer that $u_{\varepsilon_n}$ is bounded in $X^+(\Omega)$.
Hence, up to a subsequence, 
\begin{align}\label{deboleinX+}
&u_{\varepsilon_n}\rightharpoonup u_* \mbox{ in } X^+(\Omega)\\ \label{forteinL2}
\mbox{ and }\quad & u_{\varepsilon_n}\to u_* \mbox{ in } L^2(\Omega).
\end{align}

Also, by \eqref{lambda0u0}, \eqref{lambdaen1} and \eqref{lambdaen2},
\[
\lambda_0\leq \dfrac{\displaystyle\int_{[0, 1]} [u_{\varepsilon_n}]^2_s \, d\mu^+(s)}{\|u_{\varepsilon_n}\|^2_{L^2(\Omega)}} =\frac{\lambda_{\varepsilon_n}}{\|u_{\varepsilon_n}\|^2_{L^2(\Omega)}}.
\]
Thus, passing to the limit this expression and using \eqref{forteinL2}, \eqref{lambda*def} and Corollary \ref{cor=1}, we get
\[
\lambda_0\le\lim_{n\to +\infty} \frac{\lambda_{\varepsilon_n}}{\|u_{\varepsilon_n}\|^2_{L^2(\Omega)}} = \frac{\lambda_*}{\|u_*\|^2_{L^2(\Omega)}}=\lambda_*,
\]
that is
\[
\lambda_0 \le\lambda_*.
\]
This, together with \eqref{claimlambda01},  proves the claim in \eqref{claimlambda0}. 

Hence, \eqref{lambda*def} and \eqref{claimlambda0} establish~\eqref{statement12-PRIMO}, as desired.

We now consider the problem
\[
\begin{cases}
\displaystyle\int_{[0, 1]} (-\Delta)^s u\, d\mu^+(s) = \lambda_* u &  \mbox{ in } \Omega,\\
u=0 &\mbox{ in } \R^N \setminus \Omega.
\end{cases}
\]
We aim to show that
\begin{equation}\label{u*autofunzione}\begin{split}
&{\mbox{$u_*$ is an eigenfunction associated with $\lambda_*$, that is}}\\&
\int_{[0,1]}\langle u_*,v\rangle_s\,d\mu^+(s)
=\lambda_* \int_\Omega u_*(x) v(x)\,dx
\quad\mbox{ for any } v\in C^\infty_c(\Omega).\end{split}
\end{equation}
By \eqref{lambdaen2} we have that, for any $n\in \N$
and any~$v\in C^\infty_c(\Omega)$,
\begin{equation}\label{zserfvbhjikop}
\begin{aligned}
&\int_{[0,1]} 
\langle u_{\varepsilon_n},v\rangle_s
\,d\mu^+(s) =\lambda_{\varepsilon_n}  \int_{[0, \varepsilon_n]}
\langle u_{\varepsilon_n},v\rangle_s
\,d\mu^-_n(s) .
\end{aligned}
\end{equation}
Now, by \eqref{deboleinX+}, we infer that
\begin{equation}\label{zserfvbhjikop1}
\begin{aligned}
\lim_{n\to +\infty}\int_{[0,1]}
\langle u_{\varepsilon_n},v\rangle_s
\,d\mu^+(s)
=\int_{[0,1]}\langle u_*,v\rangle_s\,dx\,dy\,d\mu^+(s).
\end{aligned}
\end{equation}
Moreover, since $v\in C^\infty_c(\Omega)$, we can integrate by parts and get
\begin{equation}\label{zserfvbhjikop2}
\begin{aligned}&
\int_{[0, \varepsilon_n]}\langle u_{\varepsilon_n},v\rangle_s\,d\mu^-_n(s)
=\int_\Omega u_{\varepsilon_n}(x)\left(\,\int_{[0, \varepsilon_n]}(-\Delta)^s v(x) d\mu^-_n(s)\right)dx \\
&\qquad=\int_\Omega u_{\varepsilon_n}(x)\left(\,\int_{[0, \varepsilon_n]}\Big((-\Delta)^s v(x)-v(x)\Big) d\mu^-_n(s)\right)dx +\int_{\Omega}u_{\varepsilon_n}(x)v(x)\,dx.
\end{aligned}
\end{equation}
By \eqref{forteinL2}, we also have that
\begin{equation}\label{zserfvbhjikop3}
\lim_{n\to +\infty}\int_{\Omega}u_{\varepsilon_n}(x)v(x)\,dx=
\int_{\Omega}u_*(x)v(x)\,dx.
\end{equation}
In addition, by the Cauchy-Schwartz and Minkowski inequalities we obtain
\begin{equation}\label{fyvgubhijnkmol,ò}
\begin{aligned}
\int_\Omega u_{\varepsilon_n}(x)&\left(\,\int_{[0, \varepsilon_n]}\Big((-\Delta)^s v(x)-v(x)\Big) d\mu^-_n(s)\right)dx \\
&\le \|u_{\varepsilon_n}\|_{L^2(\Omega)}\left\|\,\,\int_{[0, \varepsilon_n]}\Big((-\Delta)^s v-v\Big)d\mu^-_n(s) \right\|_{L^2(\R^N)}\\
&\le \|u_{\varepsilon_n}\|_{L^2(\Omega)}\int_{[0, \varepsilon_n]}\left\|(-\Delta)^s v-v \right\|_{L^2(\R^N)}d\mu^-_n(s).
\end{aligned}
\end{equation}
Then, by the Plancherel Theorem, we infer that
\begin{equation}\label{fyvgubhijnkmol,ò2}
\begin{aligned}
\int_{[0, \varepsilon_n]}\left\|(-\Delta)^s v-v \right\|_{L^2(\R^N)}d\mu^-_n(s)
\le \int_{[0, \varepsilon_n]} \left(\,\,\int_{\R^N}||2\pi\xi|^{2s}-1||\widehat{v}(\xi)|^2\,d\xi\right)^\frac{1}{2}d\mu^-_n(s) .
\end{aligned}
\end{equation}

We also notice that, for all~$s\in[0,1]$,
\[
||2\pi\xi|^{2s}-1||\widehat{v}(\xi)|^2\leq 2(1+|2\pi\xi|^2) |\widehat{v}(\xi)|^2\in L^1(\R^N).
\]
From this and~\eqref{epsnto0}, we obtain that
$$ \lim_{n\to +\infty}\int_{[0, \varepsilon_n]} \left(\,\,\int_{\R^N}||2\pi\xi|^{2s}-1||\widehat{v}(\xi)|^2\,d\xi\right)^\frac{1}{2}d\mu^-_n(s)=0.$$
We plug this information into~\eqref{fyvgubhijnkmol,ò2} and we conclude that
$$ \lim_{n\to +\infty}
\int_{[0, \varepsilon_n]}\left\|(-\Delta)^s v-v \right\|_{L^2(\R^N)}d\mu^-_n(s)=0.$$
This and \eqref{fyvgubhijnkmol,ò} yield that
\begin{equation}\label{zserfvbhjikop4}
\lim_{n\to +\infty}\int_\Omega u_{\varepsilon_n}(x)\left(\,\int_{[0, \varepsilon_n]}\Big((-\Delta)^s v(x)-v(x)\Big) d\mu^-_n(s)\right)dx=0.
\end{equation}

Hence,  passing to the limit in \eqref{zserfvbhjikop}, in light of  \eqref{zserfvbhjikop1}, \eqref{zserfvbhjikop2}, \eqref{zserfvbhjikop3} and \eqref{zserfvbhjikop4}, we get the desired claim in \eqref{u*autofunzione}. 

We observe that~\eqref{claimlambda0}, \eqref{deboleinX+} and \eqref{u*autofunzione} yield the desired result in~\eqref{statement121P0EIDFJWEOBGML}.

Moreover, from \eqref{claimlambda0}, \eqref{forteinL2} and \eqref{u*autofunzione}, the desired result in \eqref{statement12} follows.
\end{proof}

We now present the proof of Theorem \ref{simple>0}.

\begin{proof}[Proof of Theorem \ref{simple>0}]
We first prove that
\begin{equation}\label{claimpositivo}
\begin{aligned}
\mbox{any eigenfunction}&\mbox{ associated
with the eigenvalue $\lambda_{\varepsilon_n}$ has constant sign}, \\
&\mbox{ provided that $n$ is large enough.}
\end{aligned}
\end{equation}
To accomplish this, we let $u_{\varepsilon_n}$ be an eigenfunction associated
with the eigenvalue $\lambda_{\varepsilon_n}$, normalized as in~\eqref{lambdaen1},
and we want to prove that $u_{\varepsilon_n}$ is positive if $n$ is large enough.

For this, we observe that the normalization in~\eqref{lambdaen1} entails
that
\begin{equation}\label{02eikdfv.1w2erfegrv8x82-pe}
\|u_{\varepsilon_n}\|^2_{H^1(\Omega)}=\lambda_{\varepsilon_n}.
\end{equation}

From Theorem \ref{main1} and the notation posed in~\eqref{lambda0u0}
we have that 
\begin{equation}\label{02eikdfv.1w2erfegrv8x82-pe.2erfgngKK}
{\mbox{$u_{\varepsilon_n}$ converges 
weakly to $u_0$ in $H^1(\Omega)$,}}\end{equation} where~$u_0$
is a first eigenfunction of the Laplacian in~$\Omega$.
Since~$\Omega$ is connected, we have that~$u_0$
is unique up to scalar multiplication and does not change sign.
Without loss of generality, we suppose that
\begin{equation}\label{AKLPS-1we2dPKJH}
{\mbox{$u$ is positive.}}
\end{equation}

Moreover, $u_{\varepsilon_n}$ is a solution
of~\eqref{probsuccessionefacile} with~$\lambda:=\lambda_{\varepsilon_n}$,
which can be written in the form
\begin{equation}\label{v4d0qwieejpr.12e0rfeprg}
\begin{cases}
\displaystyle \Delta u_{\varepsilon_n}+ \int_{[0, 1/4]} (-\Delta)^s u\, d\widetilde\mu_n^-(s)  =0&  \mbox{ in } \Omega,
\\
u=0    &  \mbox{ in } \R^N \setminus \Omega,
\end{cases}
\end{equation}where~$\widetilde\mu_n^-:=\lambda_{\varepsilon_n}\mu_n^-\big|_{[0,\varepsilon_n]}$.
It is important to notice that, since~$\lambda_{\varepsilon_n}$
is bounded (as a byproduct of Theorem~\ref{main1}),
we have that
\begin{equation}\label{v4d0qwieejpr.12e0rfeprg02}
{\mbox{$\widetilde\mu_n^-([0,1])$ is bounded uniformly in~$n$.}}
\end{equation}

Similarly, by \eqref{02eikdfv.1w2erfegrv8x82-pe} and the Sobolev
inequality, we find that~$\|u_{\varepsilon_n}\|_{H^1(\Omega)}$ is bounded uniformly in~$n$.

This observation, \eqref{v4d0qwieejpr.12e0rfeprg} and~\eqref{v4d0qwieejpr.12e0rfeprg02}
allow us to 
employ Corollary \ref{corregolarità}
here with~$f:=0$, $\widetilde s:=1/4$ and~$\mu:=\widetilde\mu_n^-$
and obtain estimates that are uniform in~$n$:
specifically, by~\eqref{inequalityregolaritàcor} we obtain that the sequence~$u_{\varepsilon_n}$ is bounded in $C^{1,\alpha}(\Omega)$.

Thus, $u_{\varepsilon_n}$ converges weakly to some $\tilde{u}$ in
$C^{1,\alpha}(\Omega)$. From this, we infer that $u_{\varepsilon_n}$ converges weakly to $\tilde{u}$ in $H^1(\Omega)$, so that $\tilde{u}=u_0$ almost everywhere in $\Omega$, due to~\eqref{02eikdfv.1w2erfegrv8x82-pe.2erfgngKK}. Then, by the Ascoli-Arzelà Theorem
\begin{equation}\label{zsexftgbhui9ikjmkl}
u_{\varepsilon_n} \mbox{ strongly converges to } u_0 \mbox{ in }
C^1(\Omega).
\end{equation}
In addition, by~\eqref{AKLPS-1we2dPKJH} and the Hopf Lemma, we have that
\begin{equation}\label{zsexftgbhui9ikjmkl1}
\frac{\partial u_0}{\partial \nu}(x)>0 
\quad \mbox{for any } x\in \partial \Omega.
\end{equation}
Thus, combining \eqref{zsexftgbhui9ikjmkl} and \eqref{zsexftgbhui9ikjmkl1}, we infer that there exists $n_1\in \N$ such that,
for any $n\geq n_1$,
\[
\inf_{\partial \Omega}\frac{\partial u_{\varepsilon_n}}{\partial \nu}\geq \frac{1}{2}\inf_{\partial \Omega}\frac{\partial u_0}{\partial \nu}=\frac{1}{2}\min_{\partial \Omega}\frac{\partial u_0}{\partial \nu}>0.
\]
Hence, we can apply Lemma \ref{lemmavicinobordo} to get that there exist $\delta_0>0$ and $c_0>0$ such that, for any $x\in \Omega_{\delta_0}:=\{x\in \Omega \mbox{ such that dist}(x,\partial \Omega)\leq \delta_0\}$, we have that $u_{\varepsilon_n}(x)\geq c_0\, \operatorname{dist}(x,\partial \Omega)$. This gives that
\begin{equation}\label{zsedxdfvgybhjinji}
u_{\varepsilon_n}>0 \quad \mbox{in }\Omega_{\delta_0}.
\end{equation}
Moreover, we have that $u_0>0$ in $\Omega\setminus \Omega_{\delta_0}$. 

Then, since $u_{\varepsilon_n}$ converges almost everywhere to $u_0$, we infer that there exists some $n_2\geq n_1$ such that,
for any $n\geq n_2$,
\begin{equation}\label{zsedxdfvgybhjinji1}
u_{\varepsilon_n} \geq \frac{u_0}{2}>0 \quad \mbox{in }\Omega\setminus \Omega_{\delta_0}.
\end{equation}
Combining \eqref{zsedxdfvgybhjinji} and \eqref{zsedxdfvgybhjinji1}
we get the desired claim in \eqref{claimpositivo}.

We now prove that $\lambda_{\varepsilon_n}$ is simple.
To this end, let $v_{\varepsilon_n}$ be another eigenfunction corresponding to the eigenvalue $\lambda_{\varepsilon_n}$. By \eqref{claimpositivo} we infer that there exists some $n_3\geq n_2$ such that, for any $n\geq n_3$, $v_{\varepsilon_n}$ can be taken to be positive. Indeed, if $v_{\varepsilon_n}$ is negative we can consider $-v_{\varepsilon_n}$
as an eigenfunction. Moreover, up to a normalization, we can assume that 
\[
\int_{[0, \varepsilon_n]}[v_{\varepsilon_n}]^2_s\,d\mu^-_n(s)=1.
\]
Then, from Theorem \ref{main1} we infer that \begin{equation}\label{vsjdf-COA}
{\mbox{$v_{\varepsilon_n}$ converges almost everywhere to $u_0$ in $\Omega$.}}\end{equation}

Now, we set $w_{\varepsilon_n}:=u_{\varepsilon_n}-v_{\varepsilon_n}$
and we want to prove that
\begin{equation}\label{claimsemplice}
\mbox{there exists } n_4\geq n_3 \mbox{ such that, for any } n\geq n_4,\,\,
w_{\varepsilon_n}(x) =0 \mbox{ almost everywhere in } \Omega.
\end{equation}
To prove this, we assume by contradiction that 
\[
\mbox{for any } n\geq n_3 \mbox{ there exists } \overline n\geq n,
\mbox{ such that }
w_{\varepsilon_{\overline n}}(x) \not \equiv 0 \mbox{ almost everywhere in } \Omega.
\]
Then, $w_{\varepsilon_{\overline n}}$ is also an eigenfunction corresponding to $\lambda_{\varepsilon_{\overline n}}$, so by \eqref{claimpositivo} we can assume $w_{\varepsilon_{\overline n}}>0$ if $\overline{n}$ is large enough. Thus, we can extract a subsequence
$w_{\varepsilon_{\overline n_{j}}}$ such that, for any $j\in \N$, we have that $w_{\varepsilon_{\overline n_{j}}}\not \equiv 0$ almost everywhere in $\Omega$. Now, we set 
\[
\widetilde{w}_{\varepsilon_{\overline n_{j}}}:=\dfrac{w_{\varepsilon_{\overline n_{j}}}}{\displaystyle\int_{[0, \varepsilon_{\overline n_{j}}]}[w_{\varepsilon_{\overline n_{j}}}]^2_s\,d\mu^-_n(s)}.
\]
From Theorem \ref{main1} we have that 
\begin{equation}\label{zsexftgbhui9ikjmkl.1e2rdfegr}
{\mbox{$\widetilde{w}_{\varepsilon_{\overline n_{j}}}$ converges almost everywhere to $u_0$ in $\Omega$.}}\end{equation} 

Moreover, in light of~\eqref{zsexftgbhui9ikjmkl}
and~\eqref{vsjdf-COA}, we have that $w_{\varepsilon_{\overline n_{j}}}=u_{\varepsilon_{\overline n_{j}}}-v_{\varepsilon_{\overline n_{j}}}$
converges almost everywhere to $0$ in $\Omega$, which is a contradiction
with~\eqref{zsexftgbhui9ikjmkl.1e2rdfegr}. This establishes the desired result in~\eqref{claimsemplice}.
\end{proof}

\section{Spectral properties on disconnected sets:
proof of Theorems~\ref{propcambiosegno}, \ref{ATTEHAMS},
\ref{lambdasnonsemplice}
and~\ref{lambdassemplice}}\label{sezionedisconnessi}

In this section we describe some properties of the first eigenvalue and its related eigenfunctions when $\Omega$ is a disconnected set.
We first prove that, under suitable hypotheses, if $\Omega$ in not connected then any eigenfunction associated with the first eigenvalue has to change sign,
thus establishing Theorem~\ref{propcambiosegno}.

\begin{proof}[Proof of Theorem~\ref{propcambiosegno}]
Let $u_{\overline s}$ be an eigenfunction associated with the first eigenvalue $\lambda_{\overline s}$ of problem \eqref{xfgyxcfghxcfg}, whose existence is guaranteed by Proposition~\ref{lambda1},
namely
\begin{equation}\label{xfgyxcfghxcfg1}
\lambda_{\overline s}=\min_{X^+(\Omega)} \dfrac{\displaystyle\int_{\Omega}|\nabla u_{\overline s}(x)|^2\,dx}{\displaystyle \int_{[0,\overline s)}[u_{\overline s}]_s^2\,d\mu^-(s)}.
\end{equation}
We assume by contradiction that $u_{\overline s}$ is constant in sign, and without loss of generality we suppose $u_{\overline s}\geq 0$ in $\Omega$. 

We first claim that $u_{\overline s}$ cannot be identically zero in an open set of positive measure $E$. Indeed, in this case, from \eqref{AMMU0} and \eqref{xfgyxcfghxcfg} we would have, for any $x\in E$,
\begin{eqnarray*}&&
0=-\Delta u(x)= \lambda\displaystyle \int_{(0,\overline s)} c_{N,s}\int_{\R^N} \frac{- u(x+y)-u(x-y)}{|y|^{N+2s}}\, dy\,d\mu^-(s)
+\lambda\mu^-(0)u(x)\\&&\qquad\qquad\qquad=\lambda\displaystyle \int_{(0,\overline s)} c_{N,s}\int_{\R^N} \frac{- u(x+y)-u(x-y)}{|y|^{N+2s}}\, dy\,d\mu^-(s)
<0,
\end{eqnarray*}
which is a contradiction.

Thus, we have $u_{\overline s}>0$ in $\Omega$. We now define $\widetilde u_{\overline s}\in X^+(\Omega)$ as
\[
\widetilde u_{\overline s}(x):=
\begin{cases}
-u_{\overline s}(x)   &\mbox{in }\Omega_1,
\\
u_{\overline s}(x)   &\mbox{in }\R^N\setminus \Omega_1,
\end{cases}
\] 
and observe that $u_{\overline s}=|\widetilde u_{\overline s}|$.

This implies that, for any $s\in (0,\overline{s})$,
\begin{equation}\label{6gTASK0eidkjfer} \int_{\Omega}|u_{\overline s}(x)|^2\,dx=
\int_{\Omega}|\widetilde u_{\overline s}(x)|^2\,dx\quad \mbox{ and } \quad 
\int_{\Omega}|\nabla u_{\overline s}(x)|^2\,dx=\int_{\Omega}|\nabla \widetilde u_{\overline s}(x)|^2\,dx .\end{equation}

Moreover,
\begin{eqnarray*}&&
|\widetilde u_{\overline s}(x)-\widetilde u_{\overline s}(y)|\\&&\qquad=\begin{cases}
| u_{\overline s}(x)- u_{\overline s}(y)|
&{\mbox{ if either $(x,y)\in\Omega_1^2$ or $(x,y)\in(\R^N\setminus\Omega_1)^2$}}\\
u_{\overline s}(x)+ u_{\overline s}(y)
&{\mbox{ if either $(x,y)\in\Omega_1\times(\R^N\setminus\Omega_1)$
or $(x,y)\in(\R^N\setminus\Omega_1)\times\Omega$}}
\end{cases}\\&&\qquad\le
| u_{\overline s}(x)- u_{\overline s}(y)|,
\end{eqnarray*}
with strict inequality when
either $(x,y)\in\Omega_1\times(\R^N\setminus\Omega_1)$
or $(x,y)\in(\R^N\setminus\Omega_1)\times\Omega$.

Therefore,
\[
[u_{\overline s}]_s^2<[\widetilde u_{\overline s}]_s^2 .
\] 

From this and~\eqref{6gTASK0eidkjfer}, we infer that
\[
\dfrac{\displaystyle\int_{\Omega}|\nabla \widetilde u_{\overline s}(x)|^2\,dx}{\displaystyle \int_{[0,\overline s)}[\widetilde u_{\overline s}]_s^2\,d\mu^-(s)}<
\dfrac{\displaystyle\int_{\Omega}|\nabla u_{\overline s}(x)|^2\,dx}{\displaystyle \int_{[0,\overline s)}[u_{\overline s}]_s^2\,d\mu^-(s)},
\]
which is in contradiction with the minimality of $u_{\overline s}$
in \eqref{xfgyxcfghxcfg1} and concludes the proof.
\end{proof}

Next, we show that, in disconnected domains, the nonlocal setting does not satisfy the same
spectral structure of the classical case and we establish Theorem~\ref{ATTEHAMS}.

\begin{proof}[Proof of Theorem~\ref{ATTEHAMS}]
For~$j\in\{1,2\}$, we normalize~$u_{\Omega_j}$ such that
$$  \int_{[0,1) } [u_{\Omega_j}]^2_s \, d\mu^-_n(s)=1,$$
thus obtaining that
$$\lambda(\Omega_j,\mu^-)= \int_{\Omega_j} |\nabla u_{\Omega_j}(x)|^2\,dx,$$
and we will use the short notations~$\lambda(\Omega_j):=\lambda(\Omega_j,\mu^-)$ and~$\Omega:=\Omega_1\cup\Omega_2$.

Also, for a small~$t>0$, we consider~$u_t:=u_{\Omega_1}-tu_{\Omega_2}$ and we remark that
\begin{equation}\label{HaD-1weidfkevrfmB-12wdfvkj}\begin{split}
\lambda(\Omega)&\le
\dfrac{\displaystyle \int_\Omega |\nabla u_t(x)|^2\,dx}{\displaystyle \int_{[0,1) } [u_t]^2_s \, d\mu^-_n(s)}\\&=\dfrac{\displaystyle \int_\Omega \Big(|\nabla u_{\Omega_1}(x)|^2+t^2|\nabla u_{\Omega_2}(x)|^2\Big)\,dx}{\displaystyle \int_{[0,1) }\Big( [u_{\Omega_1}]^2_s+t^2
 [u_{\Omega_2}]^2_s-2t\langle u_{\Omega_1},u_{\Omega_2}\rangle_s\Big) \, d\mu^-_n(s)}\\&=\dfrac{\lambda(\Omega_1)+O(t^2)}{1+t^2-2t\displaystyle \int_{[0,1] }\langle u_{\Omega_1},u_{\Omega_2}\rangle_s \, d\mu^-_n(s)}\\&=\lambda(\Omega_1)\left(1+2t
\int_{[0,1) }\langle u_{\Omega_1},u_{\Omega_2}\rangle_s \, d\mu^-_n(s)+O(t^2)\right).
\end{split}\end{equation}

Furthermore, since~$u_{\Omega_1}$ and~$u_{\Omega_2}$ have disjoint support,
we deduce from~\eqref{0owejdmcwedf:w2ed3rfevrfs7mxwoevi}
and~\eqref{0owejdmcwedf:w2ed3rfevrf} that
\begin{equation}\label{0eif0r94-0nc-1-n95hg20nrhguov}\begin{split}
&\int_{[0,1) }\langle u_{\Omega_1},u_{\Omega_2}\rangle_s \, d\mu^-_n(s)=
\mu(0)\int_{\R^N}u_{\Omega_1}(x)\,u_{\Omega_2}(x)\,dx\\& \qquad\qquad\qquad+
\int_{(0,1) } 
c_{N,s}\iint_{\R^{2N}}\frac{(u_{\Omega_1}(x)-u_{\Omega_1}(y))(u_{\Omega_2}(x)-u_{\Omega_2}(y))}{|x-y|^{N+2s}}\,dx\,dy
\, d\mu^-_n(s)\\&\qquad=-2\int_{(0,1) } 
c_{N,s}\iint_{\R^{2N}}\frac{u_{\Omega_1}(x)\,u_{\Omega_2}(y)}{|x-y|^{N+2s}}\,dx\,dy
\, d\mu^-_n(s)<0.\end{split}
\end{equation}

It follows from the latter inequality and~\eqref{HaD-1weidfkevrfmB-12wdfvkj} that,
if~$t$ is sufficiently small, then~$\lambda(\Omega)<\lambda(\Omega_1)$. By swapping indices,
one also finds that~$\lambda(\Omega)<\lambda(\Omega_2)$.
This proves~\eqref{0owejdmcwedf:w2ed3rfevrf019kdcX3}.

Now we prove~\eqref{0owejdmcwedf:w2ed3rfevrf9}. We argue for the sake of contradiction and we suppose that there exists~$c_1$, $c_2\in[0,+\infty)$ such
that~$u_\star:=c_1u_{\Omega_1}+c_2u_{\Omega_2}$ is an
eigenfunction corresponding to~$ \lambda(\Omega)$.
Then, in~$\Omega$, in the weak sense,
$$ -\Delta u_\star= \lambda(\Omega) \int_{[0,1)}(-\Delta)^s u_\star\,d\mu^-(s) .$$
In particular, in~$\Omega_1$ the function~$u_{\Omega_2}$ vanishes identically, therefore, recalling~\eqref{0owejdmcwedf:w2ed3rfevrfs7mxwoevi}, we see that,
in the weak sense, in~$\Omega_1$,
\begin{equation*}
\begin{split}
0&=c_1\left(\Delta u_{\Omega_1}(x)+\lambda(\Omega_1)\int_{[0,1)}(-\Delta)^s u_{\Omega_1}(x)\,d\mu^-(s)\right)\\&=
\Delta u_\star(x)+c_1\lambda(\Omega_1)\int_{[0,1)}(-\Delta)^s u_{\Omega_1}(x)\,d\mu^-(s)\\&=
-\lambda(\Omega) \int_{[0,1)}(-\Delta)^s u_\star\,d\mu^-(s)
+c_1\lambda(\Omega_1)\int_{[0,1)}(-\Delta)^s u_{\Omega_1}(x)\,d\mu^-(s)\\&=
c_1\Big(\lambda(\Omega_1)-\lambda(\Omega)\Big)\int_{[0,1)}(-\Delta)^s u_{\Omega_1}(x)\,d\mu^-(s)-c_2\lambda(\Omega)\int_{[0,1)}(-\Delta)^s u_{\Omega_2}(x)\,d\mu^-(s).
\end{split}
\end{equation*}
We consider~$u_{\Omega_1}$ as a test function for this equation,
thus concluding that
\begin{equation*}
\begin{split}
0=
c_1\Big(\lambda(\Omega_1)-\lambda(\Omega)\Big)\int_{[0,1)}[ u_{\Omega_1}
]^2_s\,d\mu^-(s)-c_2\lambda(\Omega)\int_{[0,1)}
\langle u_{\Omega_1},u_{\Omega_2}\rangle_s\,d\mu^-(s).
\end{split}
\end{equation*}

This identity and~\eqref{0owejdmcwedf:w2ed3rfevrf019kdcX3} give that
\begin{equation*}\begin{split}0\ge-c_2\lambda(\Omega)\int_{[0,1)}
\langle u_{\Omega_1},u_{\Omega_2}\rangle_s\,d\mu^-(s),
\end{split}\end{equation*}
with equality if and only if~$c_1=0$.

Then, recalling~\eqref{0eif0r94-0nc-1-n95hg20nrhguov},
\begin{equation*}\begin{split}0\ge-c_2\lambda(\Omega)\int_{[0,1)}
\langle u_{\Omega_1},u_{\Omega_2}\rangle_s\,d\mu^-(s)\ge0,
\end{split}\end{equation*}
with equality if and only if~$c_2=0$.

As a result, necessarily~$c_1=c_2=0$,
but this is in contradiction with the notion of eigenfunction.
\end{proof}

Now we show with an example that, if $\Omega$ is disconnected, then the first eigenvalue is not necessary simple, thus proving Theorem~\ref{lambdasnonsemplice}.

\begin{proof}[Proof of Theorem~\ref{lambdasnonsemplice}]
We denote by $\lambda_0$ the first Dirichlet eigenvalue for the Laplacian in~$ \Omega$. We recall that,
by Proposition~\ref{PROPD1}(v), this classical eigenvalue~$\lambda_0$ has multiplicity two and its eigenspace is the span of~$u_0$ and~$v_0$, where~$u_0$ is the positive eigenfunction associated with~$\lambda_0$ in~$B_{\frac12}(e_1)$, normalized such that~$\|u_0\|_{L^2(\R^N)}=1$,
and~$v_0(x):=u_0(-x)$ (notice that~$v_0$ is the positive eigenfunction associated with~$\lambda_0$ in~$B_{\frac12}(-e_1)$).

Now, we take a decreasing sequence $s_n$ such that $s_n\in (0,1)$ for any $n\in \N$ and
\[
\lim_{n\to +\infty}s_n=0,
\]
and consider the problem 
\begin{equation}\label{xftxcfgcvghxdfgyu}
\begin{cases}
-\Delta u=\lambda (-\Delta)^{s_n} u &\mbox{in }\Omega,
\\
u=0  &\mbox{in }\R^N\setminus \Omega.
\end{cases}
\end{equation}
Then, from Proposition~\ref{lambda1} we get that problem \eqref{xftxcfgcvghxdfgyu} admits a first positive eigenvalue $\lambda_{s_n}$, with a
corresponding eigenfunction that will be denoted by~$u_{s_{n}}$
and normalized such that~$[u_{s_{n}}]_{s_{n}}=1$.

Thus, by Theorem~\ref{main1}, there exists a subsequence~$u_{s_{n_k}}$ which converges pointwise to either~$u_0$ or~$v_0$.
Up to swapping~$u_0$ and~$v_0$, we can assume that
\begin{equation}\label{xftxcfgcvghxdfgyu3}
\lim_{k\to +\infty}u_{s_{n_k}}(x)=u_0(x)\quad \mbox{ for almost every }x\in \Omega.
\end{equation}
Now, for all $x\in \R^N$ we define $v_{s_{n_k}}(x)=u_{s_{n_k}}(-x)$ and we notice that $v_{s_{n_k}}$ is an eigenfuction associated with $\lambda_{s_{n_k}}$ such that $[v_{s_{n_k}}]_{s_{n_k}}=1$. Also, by~\eqref{xftxcfgcvghxdfgyu3},
\begin{equation}\label{xftxcfgcvghxdfgyunskaxsc}\lim_{k\to +\infty}v_{s_{n_k}}(x)=
\lim_{k\to +\infty}u_{s_{n_k}}(-x)=u_0(-x)=v_0(x)\quad \mbox{ for almost every }x\in \Omega.
\end{equation}

Suppose now, for the sake of contradiction, that $\lambda_{s_{n_k}}$ (or a subsequence of that) is simple. In this case, necessarily $u_{s_{n_k}}=\pm v_{s_{n_k}}$. From this, \eqref{xftxcfgcvghxdfgyu3} and~\eqref{xftxcfgcvghxdfgyunskaxsc} we infer that, almost everywhere,
$$ 0=\lim_{k\to +\infty}u_{s_{n_k}}\mp v_{s_{n_k}}=u_0\mp v_0.
$$
In particular, almost everywhere in~$ B_\frac{1}{2}(e_1)$, we have that~$0=u_0\mp v_0=u_0$,
which is a contradiction.
\end{proof}

Interestingly, even when $\Omega$ is disconnected,
the first eigenvalue may happen to be simple, as showcased by the example
in Theorem~\ref{lambdassemplice}. We now prove this result.

\begin{proof}[Proof of Theorem~\ref{lambdassemplice}]
We denote by $\lambda_0$ the first eigenvalue of problem
\[
\begin{cases}
-\Delta u=\lambda u &\mbox{in }\Omega,
\\
u=0  &\mbox{in }\R^N\setminus \Omega.
\end{cases}
\]
In this case $\lambda_0$ is simple, by Proposition~\ref{PROPD1}(iv), and the corresponding  eigenfunction $u_0$ can be taken such that $u_0(x)>0$ for any $x\in B_\frac{1}{2}(e_1)$, $u_0(x)=0$ for any $x\in \R^N\setminus B_\frac{1}{2}(e_1)$ and $\|u\|_{L^2(\Omega)}=1$, owing to Proposition~\ref{PROPD1}(ii).

Now, let $s_n$ be a decreasing sequence such that $s_n\in (0,1)$ for any $n\in \N$ and
\[
\lim_{n\to +\infty}s_n=0.
\]
We consider the problem 
\begin{equation}\label{xxfgxcfghcvgh}
\begin{cases}
-\Delta u=\lambda (-\Delta)^{s_n} u &\mbox{in }\Omega,
\\
u=0  &\mbox{in }\R^N\setminus \Omega.
\end{cases}
\end{equation}
In view of Proposition~\ref{lambda1} we know that problem \eqref{xxfgxcfghcvgh} admits a first positive eigenvalue $\lambda_{s_n}$.
Now, we want to prove that
\begin{equation}\label{xxfgxcfghcvgh1}
\mbox{there exists }\overline n>0 \mbox{ such that for any } n>\overline n,\,\, \lambda_{s_n} \mbox{ is simple}.
\end{equation}
Thus, we assume by contradiction that
\[
\mbox{for any }\overline n>0 \mbox{ there exists } n>\overline n \mbox{ such that } \lambda_{s_n} \mbox{ is not simple}.
\]
Then, we can extract a subsequence $s_{n_k}$ such that $\lambda_{s_{n_k}}$ is not simple for any $k\in \N$, and we denote by $u_{s_{n_k}}$ and $v_{s_{n_k}}$ two different eigenfunctions corresponding to $\lambda_{s_{n_k}}$ such that $[u_{s_{n_k}}]_{s_{n_k}}=[v_{s_{n_k}}]_{s_{n_k}}=1$.
Moreover, in light of Theorem \ref{main1} we can assume that both
$u_{s_{n_k}}$ and $v_{s_{n_k}}$ converge almost everywhere to $u_0$ in $\Omega$. Setting $w_{s_{n_k}}:=u_{s_{n_k}}-v_{s_{n_k}}$, we have that
\begin{equation}\label{xxfgxcfghcvgh2}
\lim_{k\to +\infty}w_{s_{n_k}}(x)=0 \quad \mbox{ for almost every }x\in \Omega.
\end{equation}
On the other hand, since~$u_{s_{n_k}}$ and~$v_{s_{n_k}}$ are assumed to be different, we can define
\[
\widetilde w_{s_{n_k}}:=\frac{w_{s_{n_k}}}{[w_{s_{n_k}}]_{s_{n_k}}}.
\]
From Theorem \ref{main1} we have that $\widetilde w_{s_{n_k}}$ converges almost everywhere to $u_0$ in $\Omega$, which is in contradiction with \eqref{xxfgxcfghcvgh2}. The proof of~\eqref{xxfgxcfghcvgh1} is thereby completed.
\end{proof}

\begin{appendix}

\section{Improved regularity in the presence of positive measures and proof of 
Corollary~\ref{thmregolarità:IMPRO}}\label{thmregolarità:IMPRO:APP}

The regularity result in Theorem~\ref{thmregolarità} can be sharpened when~$\mu^-\equiv0$,
since the dependence on the solution can be removed from the right-hand side of~\eqref{inequalityregolarità}
and~\eqref{inequalityregolaritàlow}. This is the content of
Corollary~\ref{thmregolarità:IMPRO}, which we prove here below:

\begin{proof}[Proof of Corollary~\ref{thmregolarità:IMPRO}]
We use the short notation~$ \|u\|_\star:=\|u\|_{L^{\min\{p,2\}}(\Omega)}$,
so that we can summarize~\eqref{inequalityregolarità} and~\eqref{inequalityregolaritàlow}
into
\begin{equation}\label{SUMMA}
\|u\|_{W^{2,p}(\Omega)}\leq 
C\,\Big( \|f\|_{L^p(\Omega)}+\|u\|_\star\Big).
\end{equation}

Now suppose, for the sake of contradiction, that Corollary~\ref{thmregolarità:IMPRO} does not hold true.
Then, there exists a sequence of (not identically vanishing) weak solutions~$u_j\in H^1_0(\Omega)$ of
\begin{equation*}
\begin{cases}
-\Delta u_j +\displaystyle\int_{[0,\widetilde s]}(-\Delta)^s u_j\,d\mu^+(s)
=f_j(x) & \mbox{in } \Omega,
\\
u_j=0  & \mbox{in } \R^N\setminus \Omega,
\end{cases}
\end{equation*}
for some~$f_j\in L^p(\Omega)$, such that\begin{equation}\label{l6bwgc90htfc9324rm:A}
\|u_j\|_{W^{2,p}(\Omega)}\ge
j\|f_j\|_{L^p(\Omega)}.\end{equation}

We let~$\widetilde u_j:=\frac{u_j}{\|u_j\|_\star}$ and~$\widetilde f_j:=\frac{f_j}{\|u_j\|_\star}$.
We remark that, by~\eqref{SUMMA},
\begin{equation}\label{oq02e-1wedo-dcocjn910d1-d} \| u_j\|_{W^{2,p}(\Omega)}\leq 
C\Big(\| f_j\|_{L^p(\Omega)}+\| u_j\|_\star\Big)
\end{equation}
and therefore, in view of~\eqref{l6bwgc90htfc9324rm:A},
\begin{eqnarray*}&&\| \widetilde f_j\|_{L^p(\Omega)}=\frac{\|f_j\|_{L^p(\Omega)}}{\|u_j\|_\star}
\le\frac{\|u_j\|_{W^{2,p}(\Omega)}}{j\,\|u_j\|_\star}
\le \frac{C\Big(\| f_j\|_{L^p(\Omega)}+\| u_j\|_\star\Big)}{j\,\|u_j\|_\star}
\\&&\quad\qquad\qquad\qquad=\frac{C\,\| \widetilde f_j\|_{L^p(\Omega)}}{j}
+\frac{C}j.
\end{eqnarray*}
Reabsorbing the term involving~$\| \widetilde f_j\|_{L^p(\Omega)}$,
the above estimate gives that, for large~$j$,
\begin{equation}\label{dr01wiedkf0tvX40} \| \widetilde f_j\|_{L^p(\Omega)}\le\frac{2C}j.\end{equation}

It also follows from~\eqref{oq02e-1wedo-dcocjn910d1-d} and~\eqref{dr01wiedkf0tvX40}
that, for large~$j$,
$$\|\widetilde u_j\|_{W^{2,p}(\Omega)}\leq 
C\Big(\|\widetilde f_j\|_{L^p(\Omega)}+\|\widetilde u_j\|_\star\Big)
=C\Big(\|\widetilde f_j\|_{L^p(\Omega)}+1\Big)\le 2C.$$
Hence, by the Rellich-Kondrachov Embedding Theorem,
up to a subsequence, we can suppose that~$\widetilde u_j$ converges strongly
in~$W^{1,p}(\Omega)$ (and, in particular, in~$H^1(\Omega)$)
and weakly in~$W^{2,p}(\Omega)$ to some~$
\widetilde u$.

We stress that, owing to the above mentioned strong convergence, $\widetilde u_j$ converges to~$\widetilde u$
in~${L^{\min\{p,2\}}(\Omega)}$ and consequently
\begin{equation}\label{CLSO.9}\| \widetilde u\|_\star=\lim_{j\to+\infty}\| \widetilde u_j\|_\star=1.\end{equation}
Also, since~$H^1_0(\Omega)$ is closed, we have that
\begin{equation}\label{CLSO}\widetilde u\in H^1_0(\Omega).\end{equation}

In this setting, 
from Lemma~\ref{nons}, up to renaming~$C$, for each~$s\in(0,1)$ we find that
\begin{eqnarray*}
\langle\widetilde u_j-\widetilde u,\phi\rangle_s\le [\widetilde u_j-\widetilde u]_s\,[\phi]_s 
\le C^2 [\widetilde u_j-\widetilde u]_1\,[\phi]_1,
\end{eqnarray*}
which is infinitesimal as~$j\to+\infty$, leading to
$$ \lim_{j\to+\infty}\int_{[0,\widetilde s]}\langle \widetilde u_j,\phi\rangle_s\,d\mu^+(s)=\int_{[0,\widetilde s]}\langle \widetilde u,\phi\rangle_s\,d\mu^+(s).$$

As a result, recalling~\eqref{dr01wiedkf0tvX40},
we conclude that, for every smooth function~$\phi$ compactly supported in~$\Omega$,
\begin{equation}\label{xsdp1icq1w-2.3}\begin{split}
0&=\lim_{j\to+\infty}\int_\Omega \widetilde f_j(x)\,\phi(x)\,dx
\\&=\lim_{j\to+\infty}\left(\int_\Omega \nabla\widetilde u_j(x)\cdot\nabla\phi(x)\,dx
+\int_{[0,\widetilde s]}\langle \widetilde u_j,\phi\rangle_s\,d\mu^+(s)\right)\\&=\int_\Omega \nabla\widetilde u(x)\cdot\nabla\phi(x)\,dx
+\int_{[0,\widetilde s]}\langle \widetilde u,\phi\rangle_s\,d\mu^+(s).
\end{split}\end{equation}

Also, by~\eqref{CLSO}, we can pick a sequence~$\phi_k$ of 
smooth functions compactly supported in~$\Omega$ and converging to~$\widetilde u$
in~$H^1(\Omega)$. We remark that, by Lemma~\ref{nons}, up to renaming~$C$, for each~$s\in(0,1)$ we have that
\begin{eqnarray*}
\langle\widetilde u,\widetilde u-\phi_k\rangle_s\le [\widetilde u]_s\,[\widetilde u-\phi_k]_s 
\le C^2 [\widetilde u]_1\,[\widetilde u-\phi_k]_1,
\end{eqnarray*}
which is infinitesimal as~$k\to+\infty$, therefore
\begin{equation}\label{xsdp1icq1w-2.31} \lim_{k\to+\infty}\int_{[0,\widetilde s]}\langle \widetilde u,\phi_k\rangle_s\,d\mu^+(s)=
\int_{[0,\widetilde s]}\langle \widetilde u,\widetilde u\rangle_s\,d\mu^+(s)=
\int_{[0,\widetilde s]}[ \widetilde u]_s^2\,d\mu^+(s).\end{equation}

We now employ~\eqref{xsdp1icq1w-2.3} with~$\phi_k:=\phi$
and utilize~\eqref{xsdp1icq1w-2.31} (as well as the fact that~$\mu^+$ is a nonnegative measure)
to see that
\begin{eqnarray*}
0&=&\lim_{k\to+\infty}\int_\Omega \nabla\widetilde u(x)\cdot\nabla\phi_k(x)\,dx
+\int_{[0,\widetilde s]}\langle \widetilde u,\phi_k\rangle_s\,d\mu^+(s)\\&=&
\int_\Omega |\nabla\widetilde u(x)|^2\,dx
+\int_{[0,\widetilde s]}[ \widetilde u]_s^2\,d\mu^+(s)\\&\ge&\int_\Omega |\nabla\widetilde u(x)|^2\,dx.
\end{eqnarray*}
As a consequence, we have that~$\widetilde u$ is necessarily constant,
and then it vanishes identically, due to~\eqref{CLSO},
but this is in contradiction with~\eqref{CLSO.9}.
\end{proof}

\section{Existence of eigenvalues and proof of Proposition~\ref{lambda1}}
The outline of proof of
Proposition~\ref{lambda1} is quite standard
and relies on a direct minimization argument, but it does require some care in dealing with the negative component of the measure at the denominator in \eqref{lambdamuOmega}. The details are provided below for the facility of the reader.
\begin{proof}[Proof of Proposition~\ref{lambda1}]
Let $I:X^+(\Omega) \to \R$ be the functional defined as
\begin{equation}\label{<awexcfbhjikmkl}
I(u):=\frac{1}{2}\int_{[0, 1]} [u]^2_s \, d\mu^+(s)=\frac{1}{2}\|u\|_{X^+}^2,
\end{equation}
and set
\[
M:=\left\{u\in X^+(\Omega)\,\, \mbox{ s.t. }\int_{[0, \overline s)} [u]^2_s \, d\mu^-(s)=1 \right\}.
\]
We first prove that there exists $u_*\in M$ such that
\begin{equation}\label{<awexcfbhjikmklclaim}
\min_{u\in M} I(u)=I(u_*).
\end{equation}
For this, let $u_k$ be a minimizing sequence for $I$ on $M$, namely
$u_k\in M$ for any $k\in \N$ and 
\begin{equation}\label{<awexcfbhjikmkl1}
\lim_{k\to+\infty}I(u_k)=\inf_{u\in M} I(u)\geq 0.
\end{equation}
Thus, $I(u_k)$ is bounded in $\R$, and in light of \eqref{<awexcfbhjikmkl} we have that $u_k$ is bounded in $X^+(\Omega)$.
Then,  there exists $u_*$ in $X^+(\Omega)$ such that, up to subsequences, $u_k$ converges weakly to $u_*$ in $X^+(\Omega)$ and $u_k$ converges to $u_*$ almost everywhere in $\R^N$.
Hence, we can apply \cite[Lemma 2.8]{DPLSVlog} and get
\[
\lim_{k\to+\infty}\int_{[0, \overline s)} [u_k]^2_s \, d\mu^-(s)=\int_{[0, \overline s)} [u_*]^2_s \, d\mu^-(s),
\]
that is $u_*\in M$. 

Moreover,  since $u_k\to u$ a.e. in $\R^N$, by the Fatou Lemma (or the lower semicontinuity of the norms)
we deduce that
\[
\lim_{k\to+\infty}I(u_k)=\lim_{k\to+\infty}\frac{1}{2}\int_{[0, 1]} [u_k]^2_s \, d\mu^+(s)\geq \frac{1}{2}\int_{[0, 1]} [u_*]^2_s \, d\mu^+(s)=I(u_*)\geq \inf_{u\in M} I(u).
\]
This combined with \eqref{<awexcfbhjikmkl1} gives 
\[
I(u_*)= \inf_{u\in M} I(u).
\]
Hence,  \eqref{<awexcfbhjikmklclaim} is proved.
Moreover, since $u_*\in M$, we have $u_* \not \equiv 0$, and thus
\[
I(u_*)=\frac{1}{2}\|u_*\|_{X^+}^2>0.
\]
Now, we claim that, for any $v\in X^+(\Omega)$,
\begin{equation}\label{<awexcfbhjikmklclaim2}
\int_{[0,1]}\langle u^*,v\rangle_s\,d\mu^+(s) 
=2I(u_*)\int_{[0,\overline s)}\langle u^*,v\rangle_s\,d\mu^-(s).
\end{equation}
To this end, we let $\varepsilon\in (-1,1)\setminus \{0\}$, $v\in X^+(\Omega)$ and we set
\[
u_\varepsilon(x):=\frac{u_*(x)+\varepsilon v(x)}{\displaystyle \int_{[0, \overline s)} [u_*+\varepsilon v]^2_s \, d\mu^-(s)}.
\]
We point out that $u_\varepsilon\in M$. Moreover, we have
\[
\|u_*+\varepsilon v\|_{X^+}^2=\|u_*\|_{X^+}^2+2\varepsilon \langle u_*, v\rangle_+ +\varepsilon^2\|v\|_{X^+}^2
\]
and
\[
\int_{[0, \overline s)} [u_*+\varepsilon v]^2_s \, d\mu^-(s)=1+2\varepsilon \langle u_*, v\rangle_- +\varepsilon^2 \int_{[0, \overline s)} [ v]^2_s \, d\mu^-(s).
\]
From this and \eqref{<awexcfbhjikmkl}, we get
\[
2I(u_\varepsilon)=\frac{2I(u_*)+2\varepsilon\langle u_*, v\rangle_+ +\varepsilon^2\|v\|_{X^+}^2}{\displaystyle 1+2\varepsilon \langle u_*, v\rangle_- +\varepsilon^2 \int_{[0, \overline s)} [ v]^2_s \, d\mu^-(s)}.
\]
Hence,
\[
\frac{2I(u_\varepsilon)-2I(u_*)}{\varepsilon}=\frac{2\left(\langle u_*, v\rangle_+ -2I(u_*)\langle u_*, v\rangle_-\right)
+\varepsilon \left(\|v\|_{X^+}^2 -2I(u_*)\displaystyle\int_{[0, \overline s)} [ v]^2_s \, d\mu^-(s)\right)}{\displaystyle 1+2\varepsilon \langle u_*, v\rangle_- +\varepsilon^2 \int_{[0, \overline s)} [ v]^2_s \, d\mu^-(s)}.
\]
Since $u_*$ is a minimizer for $I$ in $M$, by taking the limit as $\varepsilon$ goes to 0 we get \eqref{<awexcfbhjikmklclaim2}.
Then, \eqref{lambdamuOmega} follows from \eqref{<awexcfbhjikmklclaim2} by taking 
$\lambda_\mu(\Omega):=2I(u_*)$.
\end{proof}

\section{Growth from the boundary}
For the proof of Theorem \ref{simple>0}, we need the following auxiliary result, stating, roughly
speaking, that a positive interior derivative forces a function to grow at least like the distance function
(with a good control of the parameters involved). For completeness, we state and prove this result here below.

\begin{lemma}\label{lemmavicinobordo}
Let $\Omega$ be a bounded open set of class $C^1$ with interior unit normal~$\nu$. Let~$\alpha\in (0, 1)$
and~$v\in C^{1,\alpha}(\overline \Omega)$ be such that $v=0$ on $\partial \Omega$ and 
\[
\inf_{\partial \Omega}\frac{\partial v}{\partial \nu}\geq a>0.
\]
Then, there exist $\delta_0>0$ and $c_0>0$ depending on $N$,
$\Omega$, $a$ and $\|v\|_{C^{1,\alpha}(\overline \Omega)}$ such that, for any 
$x\in \Omega$ with dist$(x,\partial \Omega)\leq \delta_0$, we have that
\[
v(x)\geq c_0 \operatorname{dist}(x,\partial \Omega).
\]
\end{lemma}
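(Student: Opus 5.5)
The plan is to argue locally near the boundary, combining a uniform inward cone/segment condition with a first-order Taylor expansion along the inner normal. The main tools are the $C^1$ regularity of $\partial\Omega$ (uniform tubular-type neighbourhood via nearest-point projections) and the H\"older continuity of $\nabla v$ with constant $[\nabla v]_{C^{0,\alpha}}\le \|v\|_{C^{1,\alpha}(\overline\Omega)}=:K$.

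First, for $x\in\Omega$ with $d:=\operatorname{dist}(x,\partial\Omega)$ small, pick $x_0\in\partial\Omega$ with $|x-x_0|=d$. Since $\partial\Omega$ is $C^1$ and $x_0$ realizes the distance, the ball $B_d(x)$ lies in $\Omega$ and is tangent to $\partial\Omega$ at $x_0$. Hence $x=x_0+d\,\nu(x_0)$, and the segment $x_0+t\nu(x_0)$, $t\in[0,d]$, stays in $\overline\Omega$. I will note this using differentiability of the distance along the boundary: the first-order condition at the minimizer forces $x-x_0$ to be orthogonal to $T_{x_0}\partial\Omega$. Only the $C^1$ property is needed here, since the condition is first-order.

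Next, using $v(x_0)=0$ and the fundamental theorem of calculus along this segment,
\[
v(x)=\int_0^d \nabla v(x_0+t\nu(x_0))\cdot\nu(x_0)\,dt
\ge \int_0^d\Big(\frac{\partial v}{\partial\nu}(x_0)-K t^\alpha\Big)dt
\ge a\,d-\frac{K}{1+\alpha}d^{1+\alpha}.
\]
Then I choose $\delta_0$ with $K\delta_0^\alpha/(1+\alpha)\le a/2$, say $\delta_0:=\big(a(1+\alpha)/(2K)\big)^{1/\alpha}$, further reduced if needed to be smaller than the scale below which nearest points behave as above. This gives $v(x)\ge \frac a2\,d$, so $c_0:=a/2$.

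The main obstacle I expect is justifying rigorously that the segment from $x_0$ to $x$ lies in $\overline\Omega$ and is exactly normal, with only $C^1$ boundary regularity (no interior ball condition). The tangency argument above does this pointwise for any nearest point. The reduction of $\delta_0$ is only needed so that $d$ is small relative to $\Omega$; I would take it to depend on $\Omega$ through a compactness argument on $\partial\Omega$. Note that the positive normal derivative is used only at $x_0$, and the rest is controlled by the H\"older modulus of $\nabla v$, which is why the constants depend on $a$, $\alpha$ and $\|v\|_{C^{1,\alpha}}$ as stated.
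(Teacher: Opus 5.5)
Your proposal is correct and follows essentially the same route as the paper: write $x=x_0+d\,\nu(x_0)$ with $x_0$ a closest boundary point, apply the fundamental theorem of calculus along the inner normal segment, and bound $\nabla v(x_0+t\nu(x_0))-\nabla v(x_0)$ by the H\"older seminorm. Then $\delta_0$ is chosen so that the error is at most $a/2$, which gives $c_0=a/2$; your integrated bound $\frac{K}{1+\alpha}d^{1+\alpha}$ only yields a slightly larger $\delta_0$. Working with an arbitrary nearest point, as you do, is slightly more robust than the paper's appeal to a uniquely defined projection: uniqueness is not guaranteed for merely $C^1$ boundaries, and it is not needed for the argument.
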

\begin{proof}
Since $\Omega$ is of class $C^1$, there exists some $\delta_\Omega>0$ such that, for any $x\in \Omega$ with dist$(x,\partial \Omega)\leq \delta_\Omega$, we can uniquely
define the projection of $x$ on $\partial \Omega$. We denote by $\pi(x)\in \partial \Omega$ this projection and by $\nu(\pi(x))$ the unit normal vector in $\pi(x)$, so that
\[
x=\pi(x)+\operatorname{dist}(x,\partial \Omega)\nu(\pi(x)).
\]
Now, for any $t\in[0,1]$ we define
\[
x(t):=\pi(x)+t\operatorname{dist}(x,\partial \Omega)\nu(\pi(x)),
\]
and observe that $x(0)=\pi(x)\in \partial \Omega$.

In particular, we have
\begin{equation}\label{zserfcgyujnmkl}
v(x)=v(x(1))-v(x(0))=\operatorname{dist}(x,\partial \Omega)\int_0^1 \nabla v(x(t))\cdot \nu(\pi(x))\,dt.
\end{equation}
We now set 
\[
\delta_0:=\min\left\{\delta_\Omega, \left(\frac{a}{2\|v\|_{C^{1,\alpha}(\overline\Omega)}}\right)^\frac{1}{\alpha}\right\}.
\]
Thus, for any~$t\in[0,1]$ and any $x\in \Omega$ with dist$(x,\partial \Omega)\leq \delta_0$, we have
\[
\begin{aligned}
&\big(\nabla v(x(0))-\nabla v(x(t))\big)\cdot\nu(\pi(x))\le
|\nabla v(x(t))-\nabla v(x(0))|\\&\qquad\leq \|v\|_{C^{1,\alpha}(\overline\Omega)}|x(t)-x(0)|^\alpha 
=\|v\|_{C^{1,\alpha}(\overline\Omega)}t^\alpha(\operatorname{dist}(x,\partial \Omega))^\alpha \\
&\qquad\leq \|v\|_{C^{1,\alpha}(\overline\Omega)} \delta_0^\alpha \leq \frac{a}{2}.
\end{aligned}
\]
Thus, by the Cauchy–Schwarz inequality we infer that
\begin{equation}\label{zserfcgyujnmkl1}
\nabla v(x(t))\cdot\nu(\pi(x))\geq \nabla v(x(0))\cdot\nu(\pi(x))-\frac{a}{2}=\frac{\partial v}{\partial \nu}(\pi(x))-\frac{a}{2}\geq \frac{a}{2}.
\end{equation}
Combining \eqref{zserfcgyujnmkl} and \eqref{zserfcgyujnmkl1}, we obtain
\[
v(x)\geq \frac{a}{2}\operatorname{dist}(x,\partial \Omega).
\]
Hence, the desired result holds with $c_0:=a/2$.
\end{proof}

\section{Classical eigenvalues in disconnected domains}

For the convenience of the reader, we summarize here some useful observations on the Dirichlet eigenvalues
of the Laplacian in disconnected domains. To this end, given a bounded open set~$U$ with boundary of class~$C^1$,
we denote by~$\lambda(U)$ the first eigenvalue of the Laplacian in~$U$ with homogeneous Dirichlet condition
on~$\partial U$.

\begin{proposition}\label{PROPD1}
Let~$M\in\N\cap[1,+\infty)$.
Let~$\Omega=\Omega_1\cup\dots\cup\Omega_M$, where, for all~$j\in\{1,\dots,M\}$, the set~$\Omega_j$ is open, bounded, connected,
with boundary of class~$C^1$,
and such that~$\overline{\Omega_i}\cap\overline{ \Omega_j}=\emptyset$ if $i\neq j$. 

Suppose that
\begin{equation}\label{lammegmbmejp}
\lambda(\Omega_1)\le\lambda(\Omega_2)\le\dots\le\lambda(\Omega_M).\end{equation}
Then:
\begin{itemize}
\item[(i).] $\lambda(\Omega)=\min\{\lambda(\Omega_1),\dots,\lambda(\Omega_M)\}=\lambda(\Omega_1)$.
\item[(ii).] If~$u_\Omega$ is any eigenfunction corresponding to the eigenvalue~$\lambda(\Omega)$,
for every~$j\in\{1,\dots,M\}$ we have that either~$u_\Omega\equiv0$ in~$\Omega_j$
or~$\lambda(\Omega)=\lambda(\Omega_j)$.
\item[(iii).] If~$j_\star\in\{1,\dots,M\}$ is such that~$\lambda(\Omega)=\lambda(\Omega_{j_\star})$
and~$u_{\Omega_{j_\star}}$ is any eigenfunction corresponding to the eigenvalue~$\lambda(\Omega_{j_\star})$,
then~$u_{\Omega_{j_\star}}$, extended to zero outside~${\Omega_{j_\star}}$,
is also an eigenfunction in~$\Omega$ corresponding to the eigenvalue~$\lambda(\Omega)$.
\item[(iv).] The eigenvalue~$\lambda(\Omega)$ is simple if and only if~$\lambda(\Omega_1)<\lambda(\Omega_2)$.
\item[(v).] If~$\lambda(\Omega_1)=\lambda(\Omega_2)<\lambda(\Omega_j)$
for all~$j\in\{3,\dots,M\}$, then~$\lambda(\Omega)$ has multiplicity two and its eigenspace is the linear combination of the eigenspaces of~$\lambda(\Omega_1)$ and~$\lambda(\Omega_2)$.
\end{itemize}
\end{proposition}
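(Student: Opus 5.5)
The plan is to work entirely with the variational characterization
$$\lambda(U)=\min_{u\in H^1_0(U)\setminus\{0\}}\frac{\int_U|\nabla u|^2\,dx}{\int_U u^2\,dx},$$
together with the classical facts valid on each connected component $\Omega_j$: $\lambda(\Omega_j)$ is simple and its eigenfunctions have constant sign. The key structural observation is that, since the closures $\overline{\Omega_j}$ are pairwise disjoint, every $u\in H^1_0(\Omega)$ splits as $u=\sum_j u_j$ with $u_j:=u\chi_{\Omega_j}\in H^1_0(\Omega_j)$ (extended by zero). Both the Dirichlet energy and the $L^2$ norm split additively over this decomposition. This splitting is the only point where the separation hypothesis enters, and it is exactly what fails in the nonlocal setting of Theorem~\ref{ATTEHAMS}.

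For (i), the splitting gives
$$\int_\Omega|\nabla u|^2=\sum_j\int_{\Omega_j}|\nabla u_j|^2\ge\sum_j\lambda(\Omega_j)\int_{\Omega_j}u_j^2\ge\lambda(\Omega_1)\int_\Omega u^2,$$
so $\lambda(\Omega)\ge\lambda(\Omega_1)$. Testing with the first eigenfunction of $\Omega_1$, extended by zero, gives the reverse inequality. For (ii), an eigenfunction $u_\Omega$ for $\lambda(\Omega)$ attains equality in the chain above. Hence, for each $j$, either $u_j\equiv0$, or $\lambda(\Omega_j)=\lambda(\Omega_1)$ and $u_j$ attains the Rayleigh minimum on $\Omega_j$. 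For (iii), the weak formulation is local: testing with $\phi\in C^\infty_c(\Omega)$, we have $\phi=\sum_j\phi\chi_{\Omega_j}$, each piece smooth and compactly supported in $\Omega_j$. So a function supported in $\Omega_{j_\star}$ satisfies the equation in $\Omega$ if and only if it does in $\Omega_{j_\star}$, and the zero extension lies in $H^1_0(\Omega)$.

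For (iv) and (v), I will combine (ii) and (iii). By (ii), every eigenfunction has the form $\sum_{j\in J}c_j\varphi_j$, where $J:=\{j:\lambda(\Omega_j)=\lambda(\Omega_1)\}$ and $\varphi_j$ spans the (one-dimensional) eigenspace on $\Omega_j$. Indeed, for $j\in J$ the restriction $u_j$ is either zero or a minimizer on $\Omega_j$, hence a multiple of $\varphi_j$ by classical simplicity on connected sets. Conversely, (iii) together with linearity shows that every such combination is an eigenfunction. The functions $\varphi_j$ have disjoint supports, so they are linearly independent, and the eigenspace has dimension $\#J$. This yields (iv), since $\#J=1$ exactly when $\lambda(\Omega_1)<\lambda(\Omega_2)$ under the ordering \eqref{lammegmbmejp}. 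It also yields (v), where $J=\{1,2\}$.

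The only delicate step is the claim in (ii) that equality in the Rayleigh quotient forces $u_j$ to be an eigenfunction on $\Omega_j$. I will handle it by the standard argument that a minimizer of the Rayleigh quotient on $H^1_0(\Omega_j)$ satisfies the Euler--Lagrange equation, exactly as in the proof of Proposition~\ref{lambda1}.
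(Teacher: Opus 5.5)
Your proposal is correct, but it organizes the argument differently from the paper. The paper gets the lower bound in (i) and the dichotomy in (ii) from the equation. It observes that $u_\Omega\chi_{\Omega_j}$ solves $-\Delta v=\lambda(\Omega)v$ in $\Omega_j$, so it is either zero or an eigenfunction there. This forces $\lambda(\Omega)\ge\lambda(\Omega_j)$ on any component where $u_\Omega\not\equiv0$ (see \eqref{AB7} and \eqref{AiksptD}). The paper then derives (iii) variationally, from equality in \eqref{AiksptD1}, and treats (iv) and (v) separately, the forward direction of (iv) by contradiction.

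You do the reverse. Your (i) and (ii) come from the additive splitting of energy and mass, which makes the Rayleigh quotient on $\Omega$ a weighted average of the quotients on the components; (ii) is then the equality case. Your (iii) comes from locality of the weak formulation.

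Your route buys two things. First, your lower bound $\lambda(\Omega)\ge\lambda(\Omega_1)$ does not presuppose that an eigenfunction for $\lambda(\Omega)$ exists, whereas the paper starts from one. Second, your description of the eigenspace as the span of the $\varphi_j$, $j\in J$, with dimension $\#J$, is more general than (iv)--(v) and yields both at once. The paper's proof of (v) is essentially your argument for $J=\{1,2\}$.

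Finally, the step you call delicate is not. Testing the equation for $u_\Omega$ with $\phi\in C^\infty_c(\Omega_j)\subset C^\infty_c(\Omega)$ shows directly that $u_j$ weakly solves $-\Delta u_j=\lambda(\Omega)u_j$ in $\Omega_j$. This is the same locality you already use in (iii), and exactly the paper's \eqref{AB7}, so no Euler--Lagrange argument is needed.
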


\begin{proof} 
We observe that, given~$j\in\{1,\dots,M\}$,
\begin{equation}\label{AiksptD1}\begin{split}
& \lambda(\Omega)=\min_{u\in H^1_0(\Omega)\setminus\{0\}}\frac{\displaystyle\int_\Omega|\nabla u(x)|^2\,dx}{\|u\|^2_{L^2(\Omega)}}
\le\min_{{u\in H^1_0(\Omega)\setminus\{0\}}\atop{\tiny{\mbox{$u\equiv0$ in~$\R^N\setminus\Omega_j$}}}}
\frac{\displaystyle\int_\Omega|\nabla u(x)|^2\,dx}{\|u\|^2_{L^2(\Omega)}}\\&\qquad\qquad\qquad=
\min_{u\in H^1_0(\Omega_j)\setminus\{0\}}\frac{\displaystyle\int_\Omega|\nabla u(x)|^2\,dx}{\|u\|^2_{L^2(\Omega)}}
=\lambda(\Omega_j).
\end{split}\end{equation}

Also
if~$u_\Omega$ is any eigenfunction corresponding to the eigenvalue~$\lambda(\Omega)$
and~$\chi_{\Omega_j}$ is the indicator function of~$\Omega_j$,
that is
$$ \chi_{\Omega_j}(x):=\begin{cases}1&{\mbox{ if }}x\in\Omega_j,\\
0&{\mbox{ otherwise, }}\end{cases}$$
we have that~$u_\Omega\chi_{\Omega_j}\in H^1_0(\Omega_j)$ and that
$$-\Delta(u_\Omega\chi_{\Omega_j})=-\Delta u_\Omega=\lambda(\Omega)\,u_\Omega
=\lambda(\Omega)\,u_\Omega\chi_{\Omega_j},$$
showing that \begin{equation}\label{AB7}\begin{split}&{\mbox{either~$u_\Omega\chi_{\Omega_j}$ is an eigenfunction in~$\Omega_j$ corresponding to
the eigenvalue~$\lambda(\Omega)$}}\\&{\mbox{or~$u_\Omega\chi_{\Omega_j}$ vanishes identically.}}\end{split}\end{equation}

As a consequence, 
\begin{equation}\label{AiksptD}
{\mbox{either~$\lambda(\Omega)\ge\lambda(\Omega_j)$
or~$u_\Omega$ vanishes identically in~$\Omega_j$.}}\end{equation}

Thus, pick a point~$p\in\Omega$ such that~$u_\Omega(p)\ne0$.
Let~$j_p\in\{1,\dots,M\}$ be such that~$p\in\Omega_{j_p}$. We deduce from~\eqref{AiksptD}
and~\eqref{lammegmbmejp}
that~$\lambda(\Omega)\ge\lambda(\Omega_{j_p})\ge\lambda(\Omega_1)$.
This and~\eqref{AiksptD1} (used with~$j:=1$) give that~$\lambda(\Omega)=\lambda(\Omega_1)$,
which completes the proof of~(i).

Moreover, by combining~\eqref{AiksptD1} and~\eqref{AiksptD}, the claim in~(ii) plainly follows.

Regarding the claim in~(iii), the assumptions in~(iii) yield that the inequality in~\eqref{AiksptD1}
is actually an equality, whence any function attaining the minimum
in the Rayleigh quotient corresponding to~$\lambda(\Omega_j)$
also attains the minimum
in the Rayleigh quotient corresponding to~$\lambda(\Omega)$
(up to extending this function as zero outside of~$\Omega_j$).
The proof of~(iii) is thereby complete.

To prove~(iv), 
let us first suppose that~$\lambda(\Omega)$ is simple and, for the sake of
contradiction, that~$\lambda(\Omega_1)=\lambda(\Omega_2)$.
Then, by~(i), we have that~$\lambda(\Omega)=\lambda(\Omega_{j_\star})$, for all~$j_\star\in\{1,2\}$,
and then, by~(iii), if~$
u_{\Omega_{j_\star}}$ is any eigenfunction corresponding to the eigenvalue~$\lambda(\Omega_{j_\star})$,
then~$u_{\Omega_{j_\star}}\chi_{\Omega_{j_\star}}$ is
an eigenfunction in~$\Omega$ corresponding to the eigenvalue~$\lambda(\Omega)$.

The assumed simplicity of~$\lambda(\Omega)$ thus entails that there exists~$c\in\R$ such that~$ u_{\Omega_{1}}\chi_{\Omega_{1}}=c\,u_{\Omega_{2}}\chi_{\Omega_{2}}$. But this gives that~$u_{\Omega_1}$ vanishes identically
in~$\Omega_1$, which provides the desired contradiction.

To complete the proof of~(iv), let us now suppose that~$\lambda(\Omega_1)<\lambda(\Omega_2)$.
Then, by~(i) and~\eqref{lammegmbmejp}, we have that~$\lambda(\Omega)=\lambda(\Omega_1)<\lambda(\Omega_j)$
for all~$j\in\{2,\dots,M\}$.

Hence, by~(ii), any eigenfunction in~$\Omega$ corresponding to the eigenvalue~$\lambda(\Omega)$
must vanish identically in~$\Omega_j$ for all~$j\in\{2,\dots,M\}$.
Accordingly, any eigenfunction in~$\Omega$ corresponding to the eigenvalue~$\lambda(\Omega)$
must lie in~$H^1_0(\Omega_1)$
and therefore be an eigenfunction in~$\Omega_1$ corresponding to the eigenvalue~$\lambda(\Omega)$.

Since~$\Omega_1$ is connected, we know that such an eigenfunction is unique,
up to scalar multiplication. We have thereby proved that the space of eigenfunctions
in~$\Omega$ corresponding to~$\lambda(\Omega)$ is one-dimensional, thus completing the proof of~(iv).

To prove~(v), we argue as follows. By~(iii), we know that the multiplicity of~$\lambda(\Omega)$ is at least two and that the eigenfunctions~$u_{\Omega_1}$
and~$u_{\Omega_2}$ corresponding to~$\lambda(\Omega_1)$ and~$\lambda(\Omega_2)$ (in~$\Omega_1$ and~$\Omega_2$
respectively, and extended to zero outside their domains) are also eigenfunctions
corresponding to~$\lambda(\Omega)$
(and notice that~$u_{\Omega_1}$ and~$u_{\Omega_2}$ are unique up to scalar multiplication). Hence, to complete the proof of~(v), we pick any eigenfunction~$u_\Omega$ corresponding to~$\lambda(\Omega)$ and we need to show that
\begin{equation} \label{oqwdjfrptjnmuy43tghbnl1}
{\mbox{$u_{\Omega}$ is a linear combination of~$u_{\Omega_1}$ and~$u_{\Omega_2}$. }}\end{equation}

For this objective, we use~(ii) to see that~$u_\Omega\equiv0$ outside~$\Omega_1\cup\Omega_2$ and therefore
\begin{equation} \label{oqwdjfrptjnmuy43tghbnl}
u_\Omega=u_\Omega\chi_{\Omega_1}+u_\Omega\chi_{\Omega_2}.\end{equation} 

Also, by~\eqref{AB7}, we know that either~$u_\Omega\chi_{\Omega_1}$ 
vanishes identically or it is an eigenfunction associated with~$\lambda(\Omega_1)$: either way, since~$\lambda(\Omega_1)$ is simple, there exists~$c_1\in\R$ such that~$u_\Omega\chi_{\Omega_1}=c_1u_{\Omega_1}$. In the same vein, there exists~$c_2\in\R$ such that~$u_\Omega\chi_{\Omega_2}=c_1u_{\Omega_2}$. Plugging these bits of information into~\eqref{oqwdjfrptjnmuy43tghbnl}, the desired result in~\eqref{oqwdjfrptjnmuy43tghbnl1} follows.
\end{proof}

\end{appendix}

\section*{Acknowledgements} 
SD, CS and EV are members of the Australian Mathematical Society (AustMS). CS and EPL are members of the INdAM--GNAMPA.

CS acknowledges the support of the Juan de la Cierva Fellowship (grant number JDC2023-
050365-I).

This work has been supported by the Australian Laureate Fellowship FL190100081
and by the Australian Future Fellowship FT230100333.

\vfill

\end{document}